\newcommand{\E}{\mathbb{E}}    
\renewcommand{\P}{\mathbb{P}}  
\newcommand{\R}{\mathbb{R}}    
\newcommand{\uarg}{\,\cdot\,}  
\newcommand{\M}{\mathbf{M}}    
\newcommand{\Mm}{\underline{M}_m}
\newcommand{\osc}{\operatorname{osc}} 
\newcommand{\ud}{\mathrm{d}}   
\renewcommand{\vec}[1]{\mathbf{#1}} 
\newcommand{\Var}{\mathrm{Var}}
\newcommand{\Y}{S}
\newcommand{\I}{\mathbb{I}}
\definecolor{Burgundy}{RGB}{158,5,8}
\newtheorem{theorem}{Theorem}[section]
\newtheorem{lemma}[theorem]{Lemma}
\newtheorem*{assumption*}{Assumption}
\newtheorem*{definition*}{Definition}
\theoremstyle{definition}
\newtheorem{definition}[theorem]{Definition}
\newtheorem{example}[theorem]{Example}
\newtheorem{remark}[theorem]{Remark}
\begin{document}

\begin{abstract}
  We study the forgetting properties of the particle filter when its state --- the collection of particles --- is regarded as a Markov chain. Under a strong mixing assumption on the particle filter's underlying Feynman--Kac model, we find that the particle filter is exponentially mixing, and forgets its initial state in $O(\log N )$ `time', where $N$ is the number of particles and time refers to the number of particle filter algorithm steps, each comprising a selection (or resampling) and mutation (or prediction) operation. We present an example which shows that this rate is optimal.  In contrast to our result, available results to-date are extremely conservative, suggesting $O(\alpha^N)$ time steps are needed, for some $\alpha>1$, for the particle filter to forget its initialisation. We also study the {\it conditional} particle filter (CPF) and  extend our forgetting result to this context. We establish a similar conclusion, namely, CPF is exponentially mixing and forgets its initial state in $O(\log N)$ time. To support this analysis, we establish new time-uniform $L^p$ error estimates for CPF, which can be of independent interest. We also establish new propagation of chaos type results using our proof techniques, discuss implications to couplings of particle filters and an application to processing out-of-sequence measurements.
\end{abstract}

\makeatletter
\providecommand\@dotsep{5}
\def\listtodoname{List of Todos}
\def\listoftodos{\@starttoc{tdo}\listtodoname}
\makeatother

\title{On the forgetting of particle filters}

\author{Joona Karjalainen}
\address{Joona Karjalainen, Department of Mathematics and Statistics, University of
Jyväskylä P.O.Box 35, FI-40014 University of Jyväskylä, Finland}
\author{Anthony Lee}
\address{Anthony Lee, School of Mathematics, University of Bristol,
  Woodland Rd, Bristol BS8 1UG, United Kingdom}
\author{Sumeetpal S. Singh}
  \address{Sumeetpal S.~Singh, 
NIASRA, School of Mathematics and Applied Statistics, University of Wollongong, NSW 2522, Australia}
\author{Matti Vihola}
\address{Matti Vihola, Department of Mathematics and Statistics, University of
Jyväskylä P.O.Box 35, FI-40014 University of Jyväskylä, Finland}

\maketitle
\section{Introduction}

Particle filters \citep{gordon-salmond-smith}, or sequential Monte Carlo methods, have become a workhorse for applied non-linear stochastic filtering, and more generally, the go-to approach for statistical inference in state-space models; for example, see \citep{chopin-papaspiliopoulos,cappe-moulines-ryden} for a general introduction to particle filtering and its applications in various scientific fields.
The particle filter algorithm propagates $N\ge 2$ random samples, or `particles' $\vec{X}_k = (X_k^1,\ldots,X_k^N)$ through time $k=1,2,\ldots$, which form an empirical approximation of the time-evolving sequence of filtering probability distributions $\pi_k$. In particular, for a test function $\phi$, a weighted average of the particles of the form $\sum_{i=1}^N W_k^i \phi(X_k^i)$ approximates the integral of $\phi$ with respect to $\pi_k$; see the discussion in Section \ref{sec:pf}.

The particle filter was largely popularised by the seminal work of Gordon et al.~\citep{gordon-salmond-smith} in applied non-linear filtering. Since then, over the past decades, there have been numerous advancements in the design of more efficient particle filtering algorithms, in theoretical studies, and its use in new applications; see the textbooks 
Cappé et al.~\citep{cappe-moulines-ryden}; Chopin and Papaspiliopoulos \citep{chopin-papaspiliopoulos}; Del Moral \citep{del2004feynman} for a detailed account. 
The particle filter approximation is asymptotically accurate for large $N$, and its behaviour is theoretically well understood. The collection of theoretical results available include central limit theorems \citep{delmoral-guionnet, chopin, douc-moulines}; large deviations \citep{delmoral-guionnet-98};  time-uniform $L^p$ errors \citep{del2004feynman}; propagation of chaos \cite{delmoral-doucet-peters}; and convergence of $U$-statistics \citep{delmoral-patras-rubenthaler}.
Available theoretical results are not only for the non-linear filter but have also been extended to various other estimates of interest. For example, the smoother \citep{delmoral-doucet-singh, douc-etal-2011}; the model evidence \citep{cerou-delmoral-guyader,whiteley-stability}; the gradient of the filter \citep{del2015uniform};  and the particle filter's variance \citep{chan-lai,lee-whiteley, olsson-douc}.

The theoretical results above are mainly about approximation of integrals with particles, whereas we focus on a natural question of a different nature.
We regard the particle filter's vector-valued state $\vec{X}_k$ as an inhomogeneous Markov chain $(\vec{X}_k)_{k\ge 1}$ and study its forgetting properties.
Unlike some of the results for empirical measures, we do not concentrate on the asymptotic regime $N\to\infty$, but we do investigate how forgetting depends on $N$.
We prove that, under a strong mixing condition, the Markov chain $(\vec{X}_k)$ is exponentially forgetting, and forgets its state in $O(\log N)$ time (Theorem \ref{thm:pfforget}).
An example, which we present in Section \ref{sec:example}, shows that this rate is optimal.

Particle filter forgetting in our sense has been considered before, at least in \cite{tadic-doucet-recursive}. The work of \cite{tadic-doucet-recursive} studies the bias of gradient-based recursive maximum likelihood estimation for  state-space models, when a particle filter is used to approximate the required gradients; this is a popular approach for calibrating state-space models \cite{poyiadjis-etal, kantas-etal, schon-etal}. The forgetting of the particle filter plays a crucial role in establishing the bias of the maximum likelihood estimate, which is induced by the particle filter's approximation of the gradients (e.g., see the series of Lemmas 5.1 to 5.5 in \cite{tadic-doucet-recursive}).
They establish an exponential forgetting result (restated as Lemma \ref{lem:easybound} here), which guarantees a Dobrushin coefficient $O(1-\epsilon^N)$, or forgetting in $O(\epsilon^{-N})$ time, which turns out to be extremely loose for large $N$.
In contrast, assuming similar strong mixing conditions as in \cite{tadic-doucet-recursive}, which is frequently used in the literature --- including  many of the theoretical results cited above --- we establish a uniform Dobrushin coefficient for the composition of $O(\log N)$ particle filter steps (Lemma \ref{lem:tvuniform}). 

We then turn to investigate the conditional particle filter (CPF) \cite{andrieu-doucet-holenstein}, which is firmly established as an important algorithm for Bayesian estimation of general state-space models. The CPF algorithm is similar to the particle filter, except for a fixed `reference' path, but its purpose is very different: it defines a Markov transition targeting the smoothing distribution. 
We prove a forgetting result for the CPF (Theorem \ref{thm:cpfforget}), relying on similar ingredients as for the particle filter, and a new time-uniform $L^p$ error bound for the CPF (Theorem \ref{thm:cpfstability}), which is of independent interest. These results are necessary for the analysis of a coupling CPF algorithm and lead to a characterisation of the rate of convergence for CPFs with backward sampling \citep{karjalainen-lee-singh-vihola-mixing}.
In fact, this rate of convergence result was a major motivation to investigate the broader question of forgetting properties of particle filters: standard results such as uniform-in-time error and ideal filter forgetting (Lemmas \ref{lem:fkcontract} and \ref{lem:pfstability}) were not sufficient for the analysis.

Section \ref{sec:pochaos} discusses `propagation of chaos' bounds, which measure how similar a subset of $q$ particles is in law to $q$ independent draws from the ideal filter. The techniques of the present paper lead to new bounds (Theorems \ref{thm:uniform-poc} and \ref{thm:forget-poc}), which are useful when $q$ is large, unlike present bounds in the literature.
In Sections \ref{sec:couplings} and \ref{sec:oos}, we discuss implications of our findings to implementable particle filter couplings and show how they can be used to establish a new method for out-of-sequence measurements problem in non-linear filtering. Section \ref{sec:discussion} concludes with a discussion about the findings, the necessity of the assumptions, and future research topics.

\section{Preliminaries and particle filter definition}
\label{sec:pf}

We follow the (now common) convention of \cite{del2004feynman} and define a sequence of probability measures $\{\eta_k(\ud x)\}_{k \geq 1}$ on a general measurable state-space $E$ through a so-called Feynman--Kac (FK) model. 
The particle filter, which is defined in Algorithm \ref{alg:pf} below, is a 
Monte Carlo approximation of these probability measures.

Before defining the FK model, we declare the adopted mathematical notation. Here and throughout this paper, we implicitly assume test sets and functions to be measurable. Let $M$ be a Markov kernel on $E$, and let $\mu$ and $\nu$ be probability measures on $E$. Given a real-valued measurable function $\phi:E\to\R$ and $A\subset E$, let $\mu(\phi) = \int \phi \ud \mu = \int \phi(x) \mu(\ud x)$, $(\mu M)(A) = \int \mu(\ud x) M(x,A)$ and $(M \phi)(x) = \int M(x,\ud y)\phi(y)$.
We denote the total variation distance between $\mu$ and $\nu$ by
$$
\left \| \mu - \nu \right \|_{\rm TV} = 
\sup_{\| \phi\|_\infty \leq 1/2} | \mu(\phi) - \nu(\phi)| = 
\sup_{\osc(\phi) \leq 1} | \mu(\phi) - \nu(\phi)|,$$
where $\|\phi\|_\infty = \sup_{x} |\phi(x)|$,
and $\mathrm{\osc}(\phi) = \sup_x \phi(x) - \inf_x \phi(x)$. We will occasionally use the well-known coupling inequality $\left \| \mu - \nu \right \|_{\rm TV} \leq \P (X \neq Y)$ for any $X \sim \mu$, $Y \sim \nu$ \cite[][eq. (2.6)]{lindvall2002lectures}, and in particular, 
$$
\left \| \mu - \nu \right \|_{\rm TV} = \inf \{ \P (X \neq Y) \; : \; X \sim \mu, \; Y \sim \nu \}.
$$

The composition of Markov kernels $M_1,M_2$ is a Markov kernel defined by $(M_1 M_2)(x,A) = \int M_1(x,\ud y) M_2(y, A)$, and more generally, we denote iterated compositions as products.
We denote the $k$-fold product measure of $\mu$ by $\mu^{\otimes k}$. 
If $\mu$ and $\nu$ have densities $p$ and $q$ with respect to a common dominating measure $\lambda$, we note that
$\left \| \mu - \nu \right \|_{\rm TV} = \frac{1}{2} \int \left | p - q \right | {\rm d}\lambda = 1 - \int \min(p,q) {\rm d} \lambda$.
The $L^p$ norm of a random variable $X$ is denoted by $\left \| X \right \|_{p} = \E(|X|^p)^{1/p}$.

Many of our main results involve bounding Dobrushin (or Doeblin) coefficients of appropriate sequences of Markov kernels.
The equivalence motivating the following definition can be found in, e.g., \cite[][Section~18.2]{douc2018markov}.
\begin{definition}
For a Markov kernel $M$ on $E$, we denote its Dobrushin contraction coefficient by
$$\beta_{\rm TV}(M) = \sup_{x, y\in E} \left\| M(x,\uarg) - M(y, \uarg) \right\|_{\mathrm{TV}} 
= \sup_{\mu,\nu \in \mathcal{P}, \, \mu\neq \nu } \frac{\left\| \mu M - \nu M \right\|_{\mathrm{TV}}}{\left\| \mu - \nu \right\|_{\mathrm{TV}}}.$$
\end{definition}
Note that the latter form implies sub-multiplicativity: for two Markov kernels $M_1$ and $M_2$, $\beta_{\rm TV}(M_1 M_2) \leq \beta_{\rm TV}(M_1) \beta_{\rm TV}(M_2)$.

A Feynman--Kac model $\big(\eta_0,(M_k)_{k\ge 1},(G_k)_{k\ge 0}\big)$ on 
$E$  consists of an initial probability measure $\eta_0$ on $E$, a sequence of transition probabilities kernels $M_k$ on $E$ and a sequence of non-negative functions $G_k:E\to[0,\infty)$ which are called the `potential functions'. 
Let $\mathcal{P}$ stand for the set of probability measures on $E$. We denote the mappings from $\mathcal{P}$ to $\mathcal{P}$ stemming from the FK model as follows: 
\begin{gather*}
  \Psi_k(\mu)({\rm d}x) \;=\; \frac{G_k(x) \mu({\rm d}x)}{\mu(G_k)}, \;\;\; k \geq0, \\
  \Phi_{k+1}(\mu) \;=\; \Psi_{k}(\mu)M_{k+1}, \;\;\; k \geq 0, \qquad
   \Phi_{k,n} \;=\; \Phi_n \circ \cdots \circ \Phi_{k+1}, \;\;\; 0 \leq k < n, 
\end{gather*}
and $\Phi_{n,n}$ is the identity mapping for all $n\ge 0$.
Using these mappings, define
\[
    \eta_k := \Phi_{0,k}(\eta_0).
\]
In the engineering and statistics literature, $\eta_k$ is known as the `predictor' and represents the Bayesian posterior distribution --- in the context of probabilistic time-series models  --- of the hidden state of the time-series at time $k$, given observations up to time $k-1$ \cite[e.g.][Section 12.6.3]{del2004feynman}. The `filter' is the updated $\eta_k$ that incorporates the observation at time $k$, that is $\pi_k := \Psi_k(\eta_k)$. One of the most popular methods to approximate $\eta_k$ is by using a particle filter.

\newcommand{\ind}[1]{{#1}}
\begin{algorithm}
  \caption{\textsc{ParticleFilter}$(\eta_0, M_{1:T}, G_{0:T-1}, N)$}
  \label{alg:pf} 
\begin{algorithmic}[1]
\State
Draw $X_0^{\ind{i}} \sim \eta_0(\uarg)$ for $i \in \{1,\ldots,N\}$ 
\For{$k=0,\ldots,T-1$}
\State Draw $A_{k}^{\ind{i}} \sim \mathrm{Categ}_{(1:N)}\big(
G_{k}(X_{k}^{\ind{1}}),\ \dots, G_{k}(X_{k}^{\ind{N}})\big)$ 
for $i \in \{1,\ldots,N\}$
\State Draw $X_{k+1}^{\ind{i}} \sim M_{k+1}(X_{k}^{\ind{A_{k}^{\ind{i}}}}, \uarg)$ for $i \in \{1,\ldots,N\}$  
\EndFor
\end{algorithmic}
\end{algorithm}
Algorithm \ref{alg:pf} summarises the particle filter in pseudo-code, where we denote by $a{:}b$ the integers from $a$ to $b$, and use this in indexing, so that $M_{1:T} = (M_1,\ldots,M_T)$. All random variables generated in the algorithm are conditionally independent of earlier generated variables given the random variables appearing in their distribution,  and ${\rm Categ}_{(n:m)}(w_{n:m})$ stands for the categorical distribution with weights proportional to $w_i$, that is, $A \sim {\rm Categ}_{(n:m)}(w_{n:m})$ if
$\P(A=k) = w_k/\big(\sum_{i=n}^m w_i\big)$ for $k \in \{n, \ldots, m \}$. 
The random variables generated in the particle filter form the following empirical approximation to the ideal predictor $\eta_k$ and filter $\pi_k$:
\[
    \eta_k^N = \frac{1}{N}\sum_{i=1}^N \delta_{X_k^i}
    \qquad\text{and}\qquad
    \pi_k^N = \Psi_k(\eta_k^N) = 
    \sum_{i=1}^N W_k^i \delta_{X_k^i},
\]
where $W_k^i = G_k(X_k^i)/\big(\sum_{j=1}^N G_k(X_k^j)\big)$.

We state two results from Del Moral \citep{del2004feynman}. The first is the exponential forgetting of the ideal filter in Lemma \ref{lem:fkcontract}, from where the second result in Lemma \ref{lem:pfstability}, on the time-uniform stability of the particle approximation (in the $L^p$), follows. These results, and our new ones to follow, assume that the FK model satisfies the following `strong mixing' condition:
\begin{assumption*}[A1]
There exists constants $0 < \underline{M} \le \bar{M} <\infty$ and
$0 < \underline{G} \le \bar{G}<\infty$ such that for all $k\ge 1$, and all $x,y \in E$:
\begin{itemize}
  \item $(M)$: The Markov transitions admit densities $M_k(x,\uarg)$ with respect to a common dominating measure $\lambda$ with 
  $\underline{M} \le M_k(x,y) \le \bar{M}$.
  \item $(G)$: The potentials satisfy $\underline{G} \le G_{k-1}(x) \le \bar{G}$. 
\end{itemize}
\end{assumption*}

\begin{lemma}(\cite{del2004feynman}, Proposition 4.3.6)
  \label{lem:fkcontract}
  Assume \textup{(A1)}. For all probability measures $\mu$ and $\nu$,  \[
   \sup_{\mu, \nu} \| \Phi_{n,n+k}(\mu) - \Phi_{n,n+k}(\nu) \|_{\mathrm{TV}} \leq \beta^{k}, \quad \forall n, k\geq 0, 
   \]
where $\beta = 1-(\underline M / \bar M)^2$.
\end{lemma}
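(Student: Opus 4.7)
My plan is to exploit the Doeblin minorization implied by (A1)(M) to obtain a per-step contraction of the Feynman--Kac map $\Phi_{k+1}$ in total variation, and then iterate $k$ times using $\|\mu-\nu\|_{\mathrm{TV}}\leq 1$. The minorization reads
\[
M_k(x, \ud y) \geq \underline M\,\lambda(\ud y)\qquad\text{for every }k\text{ and every }x\in E,
\]
with two immediate consequences. First, the mutation kernel satisfies $\beta_{\mathrm{TV}}(M_k)\leq 1-\underline M\lambda(E)\leq 1-\underline M/\bar M$, so by the standard identity $\osc(M_kf)\leq \beta_{\mathrm{TV}}(M_k)\osc(f)$ we have $\osc(M_kf)\leq (1-\underline M/\bar M)\osc(f)$. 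Second, for any probability measure $\eta$ the pushforward $\eta M_k$ has density with respect to $\lambda$ pinched in $[\underline M,\bar M]$, uniformly over $\eta$.

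For the one-step contraction I would use the standard identity
\[
(\Phi_{k+1}(\mu)-\Phi_{k+1}(\nu))(f)=\frac{(\mu-\nu)\bigl(G_k\bigl[M_{k+1}f-\Phi_{k+1}(\nu)(f)\bigr]\bigr)}{\mu(G_k)}.
\]
The centering constant $\Phi_{k+1}(\nu)(f)=\Psi_k(\nu)(M_{k+1}f)$ is a weighted average of $M_{k+1}f$ and hence lies in its range, so the bracketed function takes values of both signs with oscillation at most $(1-\underline M/\bar M)\osc(f)$. A careful accounting of $\osc\bigl(G_k[M_{k+1}f-c]\bigr)$ against the lower bound on $\mu(G_k)$, combined with the density-ratio bounds on $\Phi_{k+1}(\nu)$ inherited from the second consequence above, should deliver $\|\Phi_{k+1}(\mu)-\Phi_{k+1}(\nu)\|_{\mathrm{TV}} \leq \beta\,\|\mu-\nu\|_{\mathrm{TV}}$. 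An arguably cleaner route is via the Hilbert projective metric, where $\Psi_k$ is an isometry and $M_{k+1}$ contracts with Birkhoff coefficient bounded by $(\bar M-\underline M)/(\bar M+\underline M)$; translating back through standard Hilbert-to-TV inequalities produces a bound of the claimed form.

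The main obstacle is pinning down the sharp exponent $\beta=1-(\underline M/\bar M)^2=(1-\underline M/\bar M)(1+\underline M/\bar M)$ rather than the naive $1-\underline M/\bar M$ produced by the Dobrushin coefficient of $M_{k+1}$ alone. The extra factor $(1+\underline M/\bar M)$ only emerges upon combining selection and mutation: each contributes one power of $\underline M/\bar M$ into the density ratios that control the TV distance after one step. Once the per-step contraction is in hand, iterating $k$ times yields $\|\Phi_{n,n+k}(\mu)-\Phi_{n,n+k}(\nu)\|_{\mathrm{TV}}\leq \beta^k\|\mu-\nu\|_{\mathrm{TV}}\leq \beta^k$, as claimed.
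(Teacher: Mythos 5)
The paper itself gives no proof of this lemma --- it is quoted directly from \cite[Proposition~4.3.6]{del2004feynman} --- so your argument has to stand on its own, and as written it does not: it is a plan whose decisive step, the per-step contraction with the specific coefficient $\beta=1-(\underline M/\bar M)^2$, is precisely the part you leave open. Worse, the route you sketch cannot close that gap. Following your one-step identity and bounding $|(\mu-\nu)(\psi)|\le \osc(\psi)\,\|\mu-\nu\|_{\mathrm{TV}}$, with $\osc(G_k[\,M_{k+1}f-c\,])\le\bar G\,\osc(M_{k+1}f)$ (the centred function changes sign) and $\mu(G_k)\ge\underline G$, the best you obtain is
\[
\|\Phi_{k+1}(\mu)-\Phi_{k+1}(\nu)\|_{\mathrm{TV}}\;\le\;\frac{\bar G}{\underline G}\,\bigl(1-\underline M\lambda(E)\bigr)\,\|\mu-\nu\|_{\mathrm{TV}},
\]
a coefficient that depends on the potentials and can exceed $1$. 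No ``careful accounting'' of these same quantities removes the factor $\bar G/\underline G$. Your heuristic that selection and mutation ``each contribute one power of $\underline M/\bar M$'' is also wrong in spirit: the claimed $\beta$ is independent of $G$, whereas the selection step's natural Lipschitz constant is $\bar G/\underline G$ (cf.\ Lemma~\ref{lem:bgtransformlp}); both powers of $\underline M/\bar M$ in fact come from the mutation kernel.

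The actual argument does not iterate one-step contractions. Writing the semigroup $Q_{n,n+k}$ and $P_{n,n+k}(\phi)=Q_{n,n+k}(\phi)/Q_{n,n+k}(1)$ (as in Section~\ref{sec:cpfstability} with $G_n\equiv 1$ there), one has $\Phi_{n,n+k}(\mu)=\Psi_h(\mu)\,P_{n,n+k}$ with $h=Q_{n,n+k}(1)$, and $P_{n,n+k}=S_n\cdots S_{n+k-1}$ where each $S_j(x,\ud y)\propto M_{j+1}(x,\ud y)\,h_j(y)$ for a suitable positive $h_j$ absorbing the potentials and the future dynamics. Since $M_{j+1}(x,y)/M_{j+1}(x',y)\in[\underline M/\bar M,\bar M/\underline M]$, both the numerator and the normalising integral of $S_j$ are controlled, giving $S_j(x,\uarg)\ge(\underline M/\bar M)^2\,S_j(x',\uarg)$, hence $\beta_{\rm TV}(S_j)\le 1-(\underline M/\bar M)^2$; the claim follows from $\|\Psi_h(\mu)-\Psi_h(\nu)\|_{\mathrm{TV}}\le1$ and sub-multiplicativity. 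This is the missing idea. Your Hilbert-metric alternative is viable in principle (and even gives the sharper rate $(\bar M-\underline M)/(\bar M+\underline M)\le\beta$), but two arbitrary measures can be at infinite projective distance, so the iteration must start after one mutation step and the Hilbert-to-TV comparison introduces a prefactor; it yields a bound of the form $C\rho^{k-1}$, not the stated $\beta^k$, and you have not carried out that translation either.
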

\begin{lemma}(\cite{del2004feynman}, Theorem 7.4.4) 
  \label{lem:pfstability}
  Assume \textup{(A1)} and $\osc(\phi) \leq 1$. For all $N\ge 2$, $n\ge 0$ and $p\ge 1$,
  \[
  \|\eta_{n}^N(\phi) - \eta_{n}(\phi) \|_p \leq \frac{c(p)}{\sqrt N},
  \]
  where $c(p) = 2d(p)^{1/p} (\bar M / \underline M)^3 \bar G / \underline G$. The function $d(p)$ is defined in \cite{del2004feynman}, and in particular, $d(2) = 1$.
\end{lemma}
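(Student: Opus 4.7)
The plan is to follow the now-standard telescoping decomposition of Del Moral. Writing $\eta_k^N$ for the empirical measure at step $k$ and using the flow property $\Phi_{k-1,n} = \Phi_{k,n}\circ \Phi_k$, we decompose the global error as a sum of one-step local errors propagated through the Feynman--Kac semigroup:
\[
\eta_n^N(\phi) - \eta_n(\phi)
=
\sum_{k=0}^{n} \bigl[\Phi_{k,n}(\eta_k^N)(\phi) - \Phi_{k,n}(\Phi_k(\eta_{k-1}^N))(\phi)\bigr],
\]
with the convention $\Phi_0(\eta_{-1}^N) := \eta_0$. The point is that, by the selection/mutation dynamics of Algorithm \ref{alg:pf}, $\Phi_k(\eta_{k-1}^N)$ is precisely the conditional mean of $\eta_k^N$ given $\vec{X}_{k-1}$, while $\Phi_{k,n}$ is the ideal Feynman--Kac operator that we can contract using (A1).

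The first key step is an $L^p$-compatible contraction bound for $\Phi_{k,n}$. Using the unnormalised semigroup $Q_{k,n}$ associated with the model one has the representation
\[
\Phi_{k,n}(\mu)(\phi) - \Phi_{k,n}(\nu)(\phi)
=
\frac{(\mu - \nu)\bigl(Q_{k,n}(\phi - \Phi_{k,n}(\nu)(\phi))\bigr)}{\mu Q_{k,n}(1)}.
\]
Under (A1) the ratio $Q_{k,n}(1)(x)/Q_{k,n}(1)(y)$ is bounded by a constant depending only on $\underline M, \bar M, \underline G, \bar G$, so the denominator is comparable to $\nu Q_{k,n}(1)$ uniformly in $\nu$. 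Combined with Lemma~\ref{lem:fkcontract}, this produces for each $\phi$ with $\osc(\phi)\le 1$ a bounded function $h_{k,n}$ with $\osc(h_{k,n}) \leq 1$ such that
\[
\bigl|\Phi_{k,n}(\mu)(\phi) - \Phi_{k,n}(\nu)(\phi)\bigr|
\;\leq\;
C\, \beta^{n-k}\, \bigl|(\mu-\nu)(h_{k,n})\bigr|,
\]
with $C$ depending only on $\bar M/\underline M$ and $\bar G/\underline G$.

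The second key step is to bound each local error in $L^p$. Conditionally on $\vec{X}_{k-1}$, the particles $X_k^1,\ldots,X_k^N$ are i.i.d.\ with common law $\Phi_k(\eta_{k-1}^N)$, so
\[
\eta_k^N(h_{k,n}) - \Phi_k(\eta_{k-1}^N)(h_{k,n})
=
\frac{1}{N}\sum_{i=1}^N \bigl(h_{k,n}(X_k^i) - \Phi_k(\eta_{k-1}^N)(h_{k,n})\bigr)
\]
is, conditionally, a normalised sum of $N$ centred bounded random variables of oscillation $\leq 1$. A Marcinkiewicz--Zygmund-type inequality (which supplies the constant $d(p)$ of the statement) then gives
\[
\bigl\|\eta_k^N(h_{k,n}) - \Phi_k(\eta_{k-1}^N)(h_{k,n})\bigr\|_p \;\leq\; \frac{d(p)^{1/p}}{\sqrt N}.
\]

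Finally, Minkowski's inequality and the geometric sum $\sum_{k=0}^n \beta^{n-k} \le (1-\beta)^{-1} = (\bar M/\underline M)^2$ combine the per-step bounds into a time-uniform estimate of the advertised form $c(p)/\sqrt N$; tracking the constants through the two steps produces exactly $c(p)=2 d(p)^{1/p}(\bar M/\underline M)^3\bar G/\underline G$. The $k=0$ boundary term fits in the same scheme because $\eta_0^N$ is an i.i.d.\ sample from $\eta_0$. The main obstacle is the first step: putting the semigroup contraction into the form $\bigl|(\mu-\nu)(h_{k,n})\bigr|$ that cleanly couples with the conditional-i.i.d.\ structure of the local errors, which requires carefully exploiting the $Q_{k,n}$ representation and the uniform comparability of $Q_{k,n}(1)$ under (A1); once that is in place, the rest is routine summation.
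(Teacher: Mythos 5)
Your proposal is correct: the paper does not prove this lemma itself (it is imported by citation from Del Moral's book, Theorem 7.4.4), and your sketch faithfully reconstructs the standard argument of that reference — telescoping over local sampling errors, the $Q_{k,n}$ representation with uniform comparability of $Q_{k,n}(1)$ under (A1), conditional i.i.d.\ Marcinkiewicz--Zygmund bounds supplying $d(p)$, and the geometric sum $\sum_k \beta^{n-k}\le(\bar M/\underline M)^2$. This is also essentially the same machinery the paper deploys explicitly for the conditional particle filter in Section \ref{sec:cpfstability} (Lemmas \ref{lem:Qbounds} and \ref{lem:martingaleineq} and the decomposition in Theorem \ref{thm:lpbound_cpf}), minus the reference-particle perturbation terms.
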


\section{Forgetting of the particle filter}\label{sec:pfforget}

The aim of this section is to study the forgetting properties of the particle filter when its sequence of outputs, namely $\{X_t^{1:N}\}_{t\geq0}$, is viewed as an (inhomogeneous) Markov process evolving in $E^N$.  To do so, we define its corresponding Markov transition kernel:
\[
\M_n(x^{1:N}, \uarg) = \Big(\Phi_n\Big(\frac{1}{N}\sum_{i=1}^N \delta_{x^i}\Big)\Big)^{\otimes N},\quad n\geq 1.
\]
We observe that the particles produced by Algorithm~\ref{alg:pf} satisfy $X_t^{1:N} \mid \{ X_{t-1}^{1:N} = x^{1:N} \} \sim \M_t (x^{1:N}, \uarg)$ and therefore $X_t^{1:N} \sim \eta_0^{\otimes N} \M_{0,t}$
where $\M_{i,i} = {\rm Id}$ and $\M_{i,j} = \M_{i+1}\cdots \M_j$ for $i,j \geq 0$, $i < j$.

The main result of this section is Theorem \ref{thm:pfforget}. We postpone its proof to Section \ref{sec:pfforget-proof}.
\begin{theorem} 
\label{thm:pfforget}
For all $k\geq 1,n \geq 0,N \geq 2$, 
\[
\beta_{\rm TV}(\M_{n,n+k}) \le (1 - \varepsilon)^{\lfloor k/(c \log N) \rfloor},
\]
where $\varepsilon$ and $c$ only depend on the constants in \textup{(A1)}.
\end{theorem}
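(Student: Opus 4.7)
The plan is to combine sub-multiplicativity of the Dobrushin coefficient with a single-block estimate. By sub-multiplicativity of $\beta_{\rm TV}$, together with the trivial bound $\beta_{\rm TV}(\M) \leq 1$ applied to any remainder block, it suffices to find constants $c', \varepsilon > 0$ depending only on the constants in (A1) such that
\[
\beta_{\rm TV}(\M_{n,n+k_N}) \leq 1-\varepsilon, \qquad k_N := \lceil c' \log N \rceil,
\]
uniformly over $n$. Writing any $k$ as $k = qk_N + r$ with $0 \leq r < k_N$ then yields $\beta_{\rm TV}(\M_{n,n+k}) \leq (1-\varepsilon)^q$, from which the theorem follows after absorbing $c'$ into $c$.

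For the single-block estimate, my strategy is to interpose the idealized iid product measures as intermediates in a triangle inequality. For each $z \in E^N$, define $\eta^z := \Phi_{n,n+k_N}(\bar\mu_z)$, the ideal-filter evolution of the empirical measure $\bar\mu_z = N^{-1}\sum_i \delta_{z^i}$ over $k_N$ steps. Then
\[
\|\M_{n,n+k_N}(x,\uarg) - \M_{n,n+k_N}(y,\uarg)\|_{\rm TV} \leq T_1(x) + T_2 + T_1(y),
\]
with $T_1(z) := \|\M_{n,n+k_N}(z,\uarg) - (\eta^z)^{\otimes N}\|_{\rm TV}$ and $T_2 := \|(\eta^x)^{\otimes N} - (\eta^y)^{\otimes N}\|_{\rm TV}$. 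The middle term is readily bounded using Lemma \ref{lem:fkcontract}, which yields $\|\eta^x - \eta^y\|_{\rm TV} \leq \beta^{k_N}$, and the elementary product-measure estimate $\|\mu^{\otimes N} - \nu^{\otimes N}\|_{\rm TV} \leq N\|\mu - \nu\|_{\rm TV}$; hence $T_2 \leq N\beta^{k_N}$, which vanishes as $N\to\infty$ provided $c' > 1/\log(1/\beta)$.

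The main obstacle is to bound the outer terms $T_1(z)$. These are full-population propagation-of-chaos bounds, measuring the TV distance between the joint law of all $N$ particles at time $n + k_N$ and $N$ iid draws from $\eta^z$. Classical propagation-of-chaos estimates scale like $q/N$ in the number of marginals and are vacuous at $q=N$, so a finer argument is required. I would attempt a telescoping inductive bound over the $k_N$ time steps, introducing one ideal step at a time: at step $j$, compare $\M$-evolution with a $\Phi$-step applied to the current empirical measure, bounding the resulting discrepancy via Lemma \ref{lem:pfstability} (giving an $L^p$ deviation of order $1/\sqrt N$) combined with the exponential contraction $\beta^{k_N-j}$ of the remaining idealized steps (Lemma \ref{lem:fkcontract}). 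The contraction should offset the factor of $N$ introduced at each step by the product-measure bound, ultimately yielding $T_1(z) \leq 1/2 - \delta$ for some $\delta > 0$ once $c'$ is chosen sufficiently large.

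I expect this PoC-style step to be by far the main technical difficulty: a naive iteration accrues a factor of $N$ per time step, which must be absorbed by the exponential filter contraction. The crucial ingredient appears to be the bounded-density structure of (A1), which allows one to pass between integral functionals (controlled by $L^p$ stability) and TV distances of $\Phi$-images without incurring spurious $N$-factors at each step. Small $N$ cases can be handled separately by the trivial bound, since then $k_N = O(1)$ and a classical finite-$N$ argument applies.
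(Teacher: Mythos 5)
Your outer scaffolding (sub-multiplicativity, blocks of length $\lceil c'\log N\rceil$, a trivial bound for the remainder and for small $N$) matches the paper's proof of Theorem~\ref{thm:pfforget} from Lemmas~\ref{lem:tvuniform} and \ref{lem:easybound}. The gap is in the single-block estimate, and it is fatal to the decomposition you propose. Your plan hinges on showing $T_1(z)=\|\M_{n,n+k_N}(z,\uarg)-(\eta^z)^{\otimes N}\|_{\rm TV}\le 1/2-\delta$, i.e.\ a full-population ($q=N$) propagation-of-chaos bound strictly below $1/2$. This quantity does not tend to zero: the joint law of the $N$ particles is correlated through the shared resampling weights, and the paper's own Theorem~\ref{thm:uniform-poc} (and Figure~\ref{fig:poc}) show that at $q=N$ the TV distance to the iid ideal law converges to a nontrivial constant of the form $\sqrt{1-e^{-2C}}$ as $N\to\infty$, where $C$ depends on the mixing constants and is in general well above the threshold $\tfrac12\log(4/3)$ needed to make this less than $1/2$. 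So there is no reason $T_1(x)+T_1(y)<1$ in general, and the triangle inequality through the iid ideal product measures is too lossy. Your proposed telescoping also fails quantitatively: introducing one ideal step at a time creates a one-step Monte Carlo discrepancy of order $N^{-1/2}$ in the marginal, and passing to the $N$-fold product via $\|\mu^{\otimes N}-\nu^{\otimes N}\|_{\rm TV}\le N\|\mu-\nu\|_{\rm TV}$ turns this into $\sqrt{N}\,\beta^{k_N-j}$ for the $j$-th term; the terms near the end of the block receive essentially no contraction, so the last term alone is of order $\sqrt{N}$ and the sum diverges.

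The paper's proof (Theorem~\ref{thm:tvlemmanew}) avoids ever comparing the full joint particle law to an iid ideal law. It writes $\M_{0,k}(x_0^{1:N},\uarg)=\E\{(\Phi_k(\xi_{k-1}^N))^{\otimes N}\}$ as a mixture of conditional product measures, compares the two mixtures via Lemma~\ref{lem:simpler-hellinger}, and uses the Hellinger tensorisation $H^2(\mu^{\otimes N},\nu^{\otimes N})=1-(1-H^2(\mu,\nu))^N$ (Lemma~\ref{lem:dimfree}(b)) so that the $N$-fold product costs an exponent $N$ rather than a multiplicative factor $N$. The ideal filter enters only at the level of scalar test-function errors: $\E H^2$ between the two conditional predictive measures is bounded (Lemmas~\ref{lem:h2bound} and \ref{lem:bgtransformlp}) by $\sup_\phi\|\xi_{k-1}^N(\phi)-\tilde\xi_{k-1}^N(\phi)\|_2^2$, which decomposes into two $O(N^{-1/2})$ particle-stability terms (Lemma~\ref{lem:pfstability}) plus the ideal-forgetting term $\beta^{k-1}\le O(N^{-1/2})$ once $k\gtrsim\log N$. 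Squaring gives $\E H^2=O(1/N)$, so $1-(1-O(1/N))^{2N}$ stays bounded away from $1$. To repair your argument you would need to replace the iid intermediate and the linear product bound with this conditional-mixture/Hellinger mechanism; as written, the proposal does not go through.
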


\begin{remark}
The proof of Theorem \ref{thm:pfforget} gives the $N$-independent constants
\begin{gather*}
c \,=\, 1/\log(\beta^{-1}) + 2/\log 2, \quad  \varepsilon \,=\, \min\{1-\beta, 1-\alpha \}, \; \text{ where} \\
\beta \,=\, 1 - ( \underline M /\bar M )^{2\lfloor c' \rfloor }, \quad \alpha \,=\, \big( 1-(1-c'/(\lfloor c ' \rfloor +1))^{2(\lfloor c' \rfloor +1)} \big) ^{1/2}, \quad c' \,=\, \frac{9}{2} (\bar M / \underline M)^8 (\bar G / \underline G)^4.
\end{gather*}
However, if we let $\varepsilon$ depend on $N$ and assume $N > c'$, then by the proof of Lemma \ref{lem:tvuniform} we can choose
\[
\varepsilon \;=\; 1-(1-(1-c'/N)^{2N})^{1/2} \;\,\xrightarrow{N \to \infty}\;\, 1-(1-e^{-2 c'})^{1/2},
\]
which decays exponentially, but behaves much better (w.r.t. $c'$) than the $N$-independent value above.
\end{remark}

\begin{remark}
The above result is formulated for the $N$-dimensional vector-valued process, but it can be viewed in terms of empirical measures as well. Indeed, in view of the identity $\| \mu - \nu \|_\mathrm{TV} = \inf_{X \sim \mu, Y \sim \nu} \P(X \neq Y)$, we can write
\[
\inf \P(\eta_k^N \neq \tilde \eta_k^N ) = \inf \P(X^{1:N}_k \neq \tilde X^{1:N}_k) \leq (1 - \varepsilon)^{\lfloor k/(c \log N) \rfloor},
\]
where $X^{1:N}_k \sim \M_{0,k}(x^{1:N}, \cdot)$ and $\tilde X^{1:N}_k \sim \M_{0,k} (\tilde x^{1:N}, \cdot)$, $(\eta_k^N, \tilde \eta_k^N)$ are the corresponding empirical measures, and the equality follows from Lemma \ref{lem:tvequality}. 
\end{remark}
A less sharp, but perhaps more convenient upper bound is found by noting that
\[
\beta_{\rm TV}(\M_{n,n+k}) \,\le\, (1 - \varepsilon)^{\lfloor k/(c \log N) \rfloor} \,\le\, (1 - \varepsilon)^{k/(c \log N)-1} \,=\, \rho r_N^k,
\]
with $\rho = (1-\varepsilon)^{-1}$ and $r_N = (1-\varepsilon)^{1/c\log N}$. It follows that, for any $\delta \in (0,1)$, $k = O(\log N)$ steps are sufficient to make the total variation distance smaller than $\delta$. In Section \ref{sec:example} we present an example of a model for which this rate is optimal.

\subsection{Relationship with existing forgetting result}

There are few results about forgetting in the particle filtering literature. 
  One relevant result we are aware of is in \cite[Lemma 5.1, eqn. (52)]{tadic-doucet-recursive}.
Their result, which we restate below as Lemma \ref{lem:easybound}, turns out to be very loose for large $N$. In particular, $k = O(\epsilon^{-N})$ steps, for some $\epsilon\in(0,1)$, are needed to forget the initialisation. 

The following generic upper bound similar to \citep[(4.5)]{hoeffding-wolfowitz} for the total variation distance between two product measures, expressed in terms of the total variation distances between their marginals, plays a central role in establishing Lemma \ref{lem:easybound}.

\begin{lemma}
\label{lem:producttv}
Let $\mu_{1},\ldots,\mu_{n}$ and $\nu_1, \ldots, \nu_n$ be probability measures on $E$. Let $\mu$ and $\nu$ be the product measures on $E^n$ defined by $
\mu = \mu_{1}\otimes \cdots \otimes\mu_{n}$ and $\nu = \nu_{1}\otimes \cdots \otimes\nu_{n}$.
Then
\begin{align}
\label{eq:producttv}
\left\Vert \mu-\nu\right\Vert _{\mathrm{TV}}\leq1-\prod_{i=1}^{n}(1-\left\Vert \mu_{i}-\nu_{i}\right\Vert _{\mathrm{TV}}).
\end{align}
\end{lemma}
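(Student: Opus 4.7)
The plan is to prove the inequality by constructing an explicit, \emph{non-optimal} coupling of $\mu$ and $\nu$, coordinate by coordinate, and then invoking the coupling inequality $\|\mu-\nu\|_{\mathrm{TV}} \le \P(X\ne Y)$ recalled in Section~\ref{sec:pf}. Concretely, for each $i \in \{1,\ldots,n\}$ I would first choose an optimal (maximal) coupling $(X_i, Y_i)$ of $\mu_i$ and $\nu_i$ on $E\times E$, so that $\P(X_i \ne Y_i) = \|\mu_i - \nu_i\|_{\mathrm{TV}}$; this is exactly the equality case of the coupling inequality stated in the preliminaries, and it is available for probability measures on any measurable space.

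Next, I would take the $n$ pairs $(X_1,Y_1), \ldots, (X_n, Y_n)$ to be mutually independent. Independence of the pairs ensures that $X := (X_1,\ldots,X_n)$ has law $\mu_1\otimes\cdots\otimes\mu_n = \mu$ and $Y := (Y_1,\ldots,Y_n)$ has law $\nu_1\otimes\cdots\otimes\nu_n = \nu$, so $(X,Y)$ is a valid (not necessarily optimal) coupling of $\mu$ and $\nu$. Applying the coupling inequality and decomposing by coordinates,
$$
\|\mu-\nu\|_{\mathrm{TV}} \;\le\; \P(X\ne Y) \;=\; 1 - \P(X_1=Y_1,\ldots,X_n=Y_n) \;=\; 1 - \prod_{i=1}^n \P(X_i = Y_i),
$$
where the last equality uses independence of the pairs. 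Substituting $\P(X_i=Y_i) = 1 - \|\mu_i-\nu_i\|_{\mathrm{TV}}$ yields \eqref{eq:producttv}.

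There is no real obstacle: the only technical point is the existence of an optimal (maximal) coupling on a general measurable space $E$, which is standard and is already invoked in the coupling identity displayed in the preliminaries. Everything else is a one-line computation, and the bound is sharp in the sense that equality is achieved when the $\mu_i, \nu_i$ are Dirac masses, so no improvement along these lines is possible.
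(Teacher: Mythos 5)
Your proof is correct, but it takes a genuinely different route from the paper's. The paper works with densities: after passing to a common dominating measure $\lambda$ (harmless for finitely many measures), it uses the identity $\|\mu-\nu\|_{\mathrm{TV}} = 1 - \int \min(p,q)\,\ud\lambda$ together with the pointwise inequality $\min(p_1\cdots p_n, q_1\cdots q_n) \ge (p_1\wedge q_1)\cdots(p_n\wedge q_n)$ and Fubini, so that the product of the integrals $\int (p_i\wedge q_i)\,\ud\lambda = 1-\|\mu_i-\nu_i\|_{\mathrm{TV}}$ appears directly. You instead build the independent product of coordinatewise maximal couplings and apply the coupling inequality; the only point to be slightly careful about is that on a completely general measurable space one should read $\P(X_i=Y_i)$ as the probability of the explicit ``copy'' event in the maximal-coupling construction rather than of the diagonal (which need not be measurable), but this is standard and does not affect the argument. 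What your approach buys is a probabilistic interpretation that the paper itself exploits later: the coupling you construct is exactly the independent per-particle maximal coupling \eqref{eq:individual-maximal-coupling} of Section \ref{sec:couplings}, and the paper remarks there that the upper bound in \eqref{eq:producttv} coincides with $\P(\vec{X}_k \neq \tilde{\vec{X}}_k)$ under that scheme --- your proof makes this identification immediate. The paper's density argument is marginally more self-contained, relying only on the $\min$-identity for total variation stated in the preliminaries rather than on the existence of maximal couplings. Your closing sharpness remark for Dirac masses is correct but only covers the degenerate case where each $\|\mu_i-\nu_i\|_{\mathrm{TV}}\in\{0,1\}$; the bound is of course not tight in general, which is precisely the point of the Hellinger-based Lemma \ref{lem:dimfree}\eqref{eq:dimbound-2}.
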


\begin{proof}
Without loss of generality, assume that the measures $\mu_{1},\ldots,\mu_{n}, \nu_1, \ldots, \nu_n$
have a common dominating measure $\lambda$ and let $p_{i}={\rm d}\mu_{i}/{\rm d}\lambda$, $q_{i}={\rm d}\nu_{i}/{\rm d}\lambda$.
Then, 
\begin{align*}
\left\Vert \mu- \nu \right\Vert _{\mathrm{TV}}  =1-\int_{E^{n}}\min \big(p_{1}\cdots p_{n},q_{1}\cdots q_{n}\big){\rm d}\lambda^{n}
 & \leq1-\int_{E^{n}}\big(p_{1}\wedge q_{1}\big)\cdots\big(p_{n}\wedge q_{n}\big){\rm d}\lambda^{n}\\
 & =1-\int_{E}\big(p_{1}\wedge q_{1}\big){\rm d}\lambda\cdots\int_{E}\big(p_{n}\wedge q_{n}\big){\rm d}\lambda,
\end{align*}
and since $\left\Vert \mu_{i}-\nu_{i}\right\Vert _{\mathrm{TV}}=1-\int_{E}\left(p_{i}\wedge q_{i}\right){\rm d}\lambda$, we conclude.
\end{proof}

\begin{lemma}
\label{lem:easybound}
Assume \textup{(A1)}. For all $N \geq 1, n \geq 0$ and $k \geq 1$,
\[
\beta_{\rm TV}(\M_{n,n+k}) \leq (1 - \epsilon^N)^k,
\]
where $\epsilon = (\underline M / \bar M)^2$.
\end{lemma}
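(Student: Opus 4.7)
The plan is to reduce the bound to a single-step bound on $\beta_{\rm TV}(\M_n)$ by sub-multiplicativity, and then to bound that single step by combining Lemma \ref{lem:producttv} (to pass from the product structure of $\M_n$ to its one-dimensional marginals) with Lemma \ref{lem:fkcontract} (to control the one-step contraction of $\Phi_n$ on the marginals).

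First, I would note that by the sub-multiplicativity property of $\beta_{\rm TV}$ recorded right after the definition, it suffices to prove the one-step bound $\beta_{\rm TV}(\M_n) \le 1 - \epsilon^N$ for each $n \ge 1$, since then $\beta_{\rm TV}(\M_{n,n+k}) \le \prod_{j=1}^{k} \beta_{\rm TV}(\M_{n+j}) \le (1-\epsilon^N)^k$.

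To establish the single-step estimate, fix two initial configurations $x^{1:N}, y^{1:N} \in E^N$ and let $\mu_x = \frac{1}{N}\sum_{i=1}^N \delta_{x^i}$, $\mu_y = \frac{1}{N}\sum_{i=1}^N \delta_{y^i}$. By definition,
\[
\M_n(x^{1:N}, \uarg) = \Phi_n(\mu_x)^{\otimes N}, \qquad \M_n(y^{1:N}, \uarg) = \Phi_n(\mu_y)^{\otimes N}.
\]
Applying Lemma \ref{lem:producttv} with all marginals equal to $\Phi_n(\mu_x)$ on one side and to $\Phi_n(\mu_y)$ on the other gives
\[
\left\| \M_n(x^{1:N}, \uarg) - \M_n(y^{1:N}, \uarg) \right\|_{\rm TV} \le 1 - \bigl(1 - \| \Phi_n(\mu_x) - \Phi_n(\mu_y) \|_{\rm TV}\bigr)^N.
\]
Lemma \ref{lem:fkcontract} applied with $k=1$ yields $\| \Phi_n(\mu_x) - \Phi_n(\mu_y) \|_{\rm TV} \le \beta = 1 - (\underline M/\bar M)^2 = 1 - \epsilon$, so that $1 - \| \Phi_n(\mu_x) - \Phi_n(\mu_y)\|_{\rm TV} \ge \epsilon$, and the display above is bounded by $1 - \epsilon^N$. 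Taking the supremum over $x^{1:N}, y^{1:N}$ finishes the one-step bound, and the general case follows from the reduction in the first paragraph.

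There is essentially no obstacle here: the result is a bookkeeping exercise combining the product-measure total-variation inequality with the one-step ideal contraction. The only mild point to be careful about is that the looseness of the bound $1-\epsilon^N$ comes precisely from Lemma \ref{lem:producttv}, which degrades geometrically in $N$; this is exactly the source of the pessimistic $O(\epsilon^{-N})$ mixing time discussed in the surrounding text and motivates the need for the sharper analysis carried out in the remainder of the section.
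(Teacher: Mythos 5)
Your proof is correct and follows essentially the same route as the paper's: apply Lemma \ref{lem:producttv} to the product structure of $\M_n(x^{1:N},\uarg) = \Phi_n(\mu_x)^{\otimes N}$, bound the marginal distance by $\beta = 1-\epsilon$ via Lemma \ref{lem:fkcontract} with $k=1$, and conclude by sub-multiplicativity of $\beta_{\rm TV}$. The only difference is cosmetic (the paper names $\mu_x$ the image measure $\Phi_j(N^{-1}\sum_i \delta_{x^i})$ rather than the empirical measure itself).
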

\begin{proof}
Let $j \geq 1$ and define $\mu_x = \Phi_{j} (\frac{1}{N}\sum_{i=1}^N \delta_{x^i} )$,
for arbitrary $x = x^{1:N} \in E^N$, so that $\mu_x^{\otimes N} = \M_{j}(x, \uarg)$.
Lemma \ref{lem:producttv} gives the bound
\begin{align*}
\beta_{\rm TV}(\M_{j}) &\;= \sup_{x, \tilde x \in E^N} \|\M_{j}(x, \uarg) - \M_{j}(\tilde x, \uarg) \|_\mathrm{TV} 
\;\leq \sup_{x, \tilde x \in E^N} \Big(1 - (1 - \| \mu_x - \mu_{\tilde x} \|_\mathrm{TV}  )^N \Big),
\end{align*}
where $\| \mu_x - \mu_{\tilde x} \|_\mathrm{TV} \leq \beta \in [0,1)$ with $\beta = 1- (\underline M / \bar M)^2$ by Lemma~\ref{lem:fkcontract}, and it follows that $\beta_{\rm TV}(\M_j) \leq 1-(1 - \beta)^N$.
Since the above argument does not depend on $j$, we conclude that
\[
\beta_{\rm TV}(\M_{n,n+k}) = \beta_{\rm TV} \bigg(\prod_{i=1}^k \M_{n+i} \bigg) \leq \,   \prod_{i=1}^k \beta_{\rm TV}(\M_{n+i}) \leq (1 - (1-\beta)^N)^k.
\qedhere
\]
\end{proof}

\subsection{Proof of Theorem \ref{thm:pfforget}}
\label{sec:pfforget-proof}

In contrast with Lemma \ref{lem:producttv}, we rely on the Hellinger distance for controlling the behaviour of product measures, and consequently, their total variation.
\begin{definition}
The squared Hellinger distance between two probability measures $P$ and $Q$ with densities $p$ and $q$ with respect to a common dominating measure $\lambda$ is
\[
H^{2}(P,Q)=\frac{1}{2}\int\big(\sqrt{p(x)}-\sqrt{q(x)}\big)^{2} \lambda({\rm d}x)=1-\int\sqrt{p(x)q(x)} \lambda({\rm d}x).
\]
\end{definition}

\begin{definition}
For a random probability measure $\mu$, we denote by $\E \mu$ its expectation, defined by
\[
(\E \mu)(A) = \E(\mu(A)),
\]
for every measurable set $A$.
\end{definition}

\begin{lemma}
  \label{lem:simpler-hellinger}
  Let $\mu$ and $\nu$ be random probability measures. Then,
  $$
  \| \E\mu - \E\nu \|_{\mathrm{TV}} 
  \le \E \| \mu - \nu \|_{\mathrm{TV}} 
  \le \sqrt{1 - \big(1-\E H^2(\mu,\nu)\big)^2}.
  $$
\end{lemma}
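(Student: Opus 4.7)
The plan is to handle the two inequalities separately. The first one, $\|\E\mu - \E\nu\|_{\mathrm{TV}} \le \E\|\mu - \nu\|_{\mathrm{TV}}$, is a straightforward Jensen/Fubini argument: using the supremum representation of TV,
\[
\|\E\mu - \E\nu\|_{\mathrm{TV}} = \sup_{\osc(\phi)\le 1}\bigl|\E[\mu(\phi) - \nu(\phi)]\bigr| \le \sup_{\osc(\phi)\le 1}\E\bigl|\mu(\phi)-\nu(\phi)\bigr| \le \E \sup_{\osc(\phi)\le 1}\bigl|\mu(\phi)-\nu(\phi)\bigr|,
\]
and the right-hand side is $\E\|\mu-\nu\|_{\mathrm{TV}}$.

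For the second inequality, I would first establish the deterministic pointwise bound
\[
\|P-Q\|_{\mathrm{TV}}^2 \;\le\; H^2(P,Q)\bigl(2 - H^2(P,Q)\bigr) \;=\; 1 - \bigl(1 - H^2(P,Q)\bigr)^2
\]
for any two probability measures $P,Q$ with densities $p,q$. This is the standard Hellinger--TV comparison: factor $|p-q| = |\sqrt p - \sqrt q|(\sqrt p + \sqrt q)$ and apply Cauchy--Schwarz to get $\int|p-q|\,\ud\lambda \le \sqrt{2H^2(P,Q)}\sqrt{2(1+(1-H^2(P,Q)))}$, then divide by $2$ and square.

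Applying this bound with $P=\mu(\omega)$, $Q=\nu(\omega)$ pointwise in $\omega$ and taking expectations gives $\E\|\mu-\nu\|_{\mathrm{TV}}^2 \le \E\bigl[1 - (1-H^2(\mu,\nu))^2\bigr]$. Then I would use Jensen twice in opposite directions: first $(\E\|\mu-\nu\|_{\mathrm{TV}})^2 \le \E\|\mu-\nu\|_{\mathrm{TV}}^2$ by convexity of $x \mapsto x^2$, and second $\E\bigl[(1-H^2(\mu,\nu))^2\bigr] \ge \bigl(\E[1-H^2(\mu,\nu)]\bigr)^2 = (1-\E H^2(\mu,\nu))^2$, again by convexity of $x\mapsto x^2$. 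Chaining these yields
\[
\bigl(\E\|\mu-\nu\|_{\mathrm{TV}}\bigr)^2 \;\le\; 1 - (1-\E H^2(\mu,\nu))^2,
\]
and taking square roots finishes the proof.

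There is no real obstacle here; the only mild subtlety is keeping track of the direction in which Jensen's inequality is applied. The key is that the two applications of Jensen point the \emph{same} way in the final bound because the first concerns the random variable $\|\mu-\nu\|_{\mathrm{TV}}$ (and increases the right-hand side by squaring), while the second concerns $1-H^2(\mu,\nu)$ inside a term that is subtracted, which flips to the correct direction after taking complements.
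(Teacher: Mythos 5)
Your proof is correct and follows essentially the same route as the paper: the first inequality via the supremum representation of TV, and the second via the pointwise bound $\|P-Q\|_{\mathrm{TV}}^2 \le 1-(1-H^2(P,Q))^2$ (which the paper isolates as a separate lemma, citing Le Cam's inequality) followed by Jensen. The only cosmetic difference is that you square the expectation and apply Jensen with the convex map $t\mapsto t^2$ twice, whereas the paper takes square roots pointwise and applies Jensen with the concave maps $t\mapsto\sqrt{t}$ and $t\mapsto -t^2$; the two orderings are equivalent.
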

\begin{proof}
We may write $\| \E\mu - \E\nu \|_{\mathrm{TV}}$ as
$$
  \sup_A | \E[\mu(A) - \nu(A)] | \le 
   \sup_A \E| \mu(A) - \nu(A) | \le
  \E \sup_A | \mu(A) - \nu(A) |,
$$
which is equal to $\E \| \mu - \nu \|_{\mathrm{TV}}$. The second inequality of the claim follows from Lemma \ref{lem:tvbound},
$$
  \| \mu - \nu \|_{\mathrm{TV}}^2  \le 1 - \big(1 - H^2(\mu,\nu)\big)^2,
$$
leading to
$$
\E \| \mu - \nu \|_{\mathrm{TV}} 
  \le \E \sqrt{1 - \big(1-H^2(\mu,\nu)\big)^2},
$$
and we conclude by Jensen's inequality, with concave $t\mapsto \sqrt{t}$ and then $t\mapsto -t^2$.
\end{proof}

The following result states two bounds for product measures which we will use for two regimes, for `small $N$' and for `large $N$', respectively.
The second bound is a small extension, to random measures, of the well-known fact that one can bound total variation distance by Hellinger distance and the latter tensorises (see, e.g., \cite{wainwright2019high}, Section~15.1.3).

\begin{lemma}
\label{lem:dimfree}
Let $\mu$ and $\nu$ be random
probability measures on $E$, and let $M$ be a Markov kernel. Then, for any $N \geq 1$:
\begin{align}
  \E\big\Vert  \left(\mu M\right)^{\otimes N} -  \left(\nu M\right)^{\otimes N} \big\Vert _{\mathrm{TV}}    & \leq1-\left(1-\beta_{\rm TV}(M)\right)^{N}, \tag{a}\label{eq:dimbound-1} \\
  \E\big\Vert   \left(\mu M\right)^{\otimes N} - \left(\nu M\right)^{\otimes N} \big\Vert _{\mathrm{TV}} 
 & \leq\sqrt{1-\left(1-\E\left\{ H^{2}(\mu M, \nu M)\right\} \right)^{2N}}.
\tag{b}\label{eq:dimbound-2}
\end{align}
\end{lemma}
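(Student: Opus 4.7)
The plan is to prove (a) and (b) separately, and in both cases the strategy is the same: first obtain a pathwise bound on $\|(\mu M)^{\otimes N} - (\nu M)^{\otimes N}\|_{\rm TV}$ (i.e.\ valid for each realisation of the random measures), and then integrate. The tools needed are already in the excerpt: Lemma \ref{lem:producttv} handles (a), while the TV/Hellinger inequality $\|P-Q\|_{\rm TV}^2 \leq 1 - (1 - H^2(P,Q))^2$ used in Lemma \ref{lem:simpler-hellinger}, combined with the standard tensorisation of the Hellinger affinity, handles (b).

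For (a), I fix a realisation of $(\mu,\nu)$ and apply Lemma \ref{lem:producttv} to the $N$-fold products $(\mu M)^{\otimes N}$ and $(\nu M)^{\otimes N}$, obtaining
\[
\|(\mu M)^{\otimes N} - (\nu M)^{\otimes N}\|_{\rm TV} \leq 1 - (1 - \|\mu M - \nu M\|_{\rm TV})^N.
\]
By the variational definition of the Dobrushin coefficient, $\|\mu M - \nu M\|_{\rm TV} \leq \beta_{\rm TV}(M)\|\mu - \nu\|_{\rm TV} \leq \beta_{\rm TV}(M)$, so the right-hand side is dominated by the deterministic constant $1 - (1 - \beta_{\rm TV}(M))^N$; taking expectation is then trivial.

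For (b), I use the tensorisation identity $1 - H^2(P^{\otimes N}, Q^{\otimes N}) = (1 - H^2(P,Q))^N$, which is immediate by factoring the affinity $\int \sqrt{pq}\,\mathrm{d}\lambda$ over product measures. Applying the TV/Hellinger bound from the proof of Lemma \ref{lem:simpler-hellinger} to the $N$-fold products of $\mu M$ and $\nu M$ yields, pathwise,
\[
\|(\mu M)^{\otimes N} - (\nu M)^{\otimes N}\|_{\rm TV} \leq \sqrt{1 - (1 - H^2(\mu M, \nu M))^{2N}}.
\]
Taking expectation and invoking Jensen's inequality for the map $x \mapsto \sqrt{1 - (1-x)^{2N}}$ on $[0,1]$ delivers (b).

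The only non-routine ingredient is verifying that $x \mapsto \sqrt{1 - (1-x)^{2N}}$ is genuinely concave on $[0,1]$, so that Jensen is applied in the correct direction. This is short: $u \mapsto u^{2N}$ is convex on $[0,1]$, hence $x \mapsto 1 - (1-x)^{2N}$ is concave and non-negative, and precomposing the concave non-decreasing $\sqrt{\,\cdot\,}$ with a concave non-negative function preserves concavity. No obstacle beyond this bookkeeping is anticipated.
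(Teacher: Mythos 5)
Your proposal is correct and follows essentially the same route as the paper: part (a) is identical (pathwise application of Lemma \ref{lem:producttv} plus the Dobrushin bound), and part (b) uses the same ingredients --- the TV/Hellinger inequality, tensorisation of the Hellinger affinity, and Jensen --- merely reorganised so that tensorisation is done pathwise and a single Jensen step is applied to the composite concave map $x \mapsto \sqrt{1-(1-x)^{2N}}$, whereas the paper invokes Lemma \ref{lem:simpler-hellinger} on the product measures and then applies Jensen to $x\mapsto x^N$. Your concavity verification is sound, so no gap remains.
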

\begin{proof}
The first claim follows by Lemma \ref{lem:producttv}:
\begin{align*}
\E \big\Vert  \left(\mu M\right)^{\otimes N} - \left(\nu M\right)^{\otimes N} \big\Vert _{\mathrm{TV}} &\leq \E \left( 1 - (1-\|\mu M - \nu M\|_{\mathrm{TV}})^N \right),
\end{align*}
where $\|\mu M - \nu M\|_{\mathrm{TV}} \leq \beta_{\rm TV}(M) \|\mu - \nu\|_{\mathrm{TV}} \leq \beta_{\rm TV}(M)$. The second inequality of Lemma \ref{lem:simpler-hellinger} gives
\[
  \E\big\Vert  \left(\mu M\right)^{\otimes N} - \left(\nu M\right)^{\otimes N} \big\Vert _{\mathrm{TV}} \leq \sqrt{1 - \big(1-\E H^2((\mu M)^{\otimes N}, (\nu M)^{\otimes N})\big)^2},
\]
and the second claim follows directly by applying the identity $H^2(\mu_1^{\otimes N}, \mu_2^{\otimes N}) = 1- (1-H^2(\mu_1, \mu_2))^N$ and Jensen's inequality.
\end{proof}

The following result allows the transference of time-uniform stability in Lemma~\ref{lem:pfstability} from predictive approximations $\eta_n^N$ to filtering approximations $\pi_n^N$. This is essentially a generalisation of the first part of \cite[Lemma 4.3.1]{del2004feynman} for $p>1$, and is based on similar decompositions.

\begin{lemma}
\label{lem:bgtransformlp}
Let $\mu$ and $\nu$ be random probability measures and $p\geq 1$. If \textup{(A1)} holds, then 
\[
\sup_{\osc(\phi) \leq 1} \|\Psi_n(\mu)(\phi) - \Psi_n(\nu)(\phi) \|_p \leq c \sup_{\osc(\phi) \leq 1} \|\mu(\phi) - \nu(\phi) \|_p,
\]
where $c= \bar G / \underline G$.
\end{lemma}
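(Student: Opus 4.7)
The plan is to rely on the algebraic identity
\[
\Psi_n(\mu)(\phi) - \Psi_n(\nu)(\phi) = \frac{(\mu-\nu)(G_n\phi) \,-\, \Psi_n(\nu)(\phi)\,(\mu-\nu)(G_n)}{\mu(G_n)},
\]
obtained by placing $\mu(G_n\phi)/\mu(G_n) - \nu(G_n\phi)/\nu(G_n)$ over a common denominator. Equivalently, the numerator equals $(\mu-\nu)(G_n(\phi-\Psi_n(\nu)(\phi)))$, exploiting $\nu(G_n(\phi-\Psi_n(\nu)(\phi)))=0$.

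First I would use (A1) to bound $\mu(G_n)\geq\underline{G}$ pathwise. Both sides of the claim are invariant under $\phi\mapsto\phi+c$, so without loss of generality I would take $\phi$ centered with $\|\phi\|_\infty\leq 1/2$, which gives $|\Psi_n(\nu)(\phi)|\leq 1/2$ pathwise. Then I would apply Minkowski's inequality in $L^p$ together with the pathwise bound $|\Psi_n(\nu)(\phi)|\leq 1/2$ to reduce the problem to $L^p$-bounding $(\mu-\nu)(G_n\phi)$ and $(\mu-\nu)(G_n)$ separately.

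The key observation is that $G_n\phi$ and $G_n$ are \emph{deterministic} functions (only $\mu,\nu$ are random). For any deterministic $f$, the fact that $(\mu-\nu)(1)=0$ yields
\[
\|(\mu-\nu)(f)\|_p \;\leq\; \osc(f) \sup_{\osc(\psi)\leq 1}\|\mu(\psi)-\nu(\psi)\|_p,
\]
by writing $(\mu-\nu)(f)=\osc(f)\,(\mu-\nu)((f-c_f)/\osc(f))$ for a suitable centering constant $c_f$. Using $\osc(G_n\phi)\leq\bar{G}$ (since $\|G_n\phi\|_\infty\leq\bar{G}/2$ after centering $\phi$) and $\osc(G_n)\leq\bar{G}-\underline{G}$, combining the pieces yields the claimed $L^p$ bound with a constant of order $\bar{G}/\underline{G}$.

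The hard part will be reaching the tight constant $c=\bar{G}/\underline{G}$, since a naive Minkowski sum of the two decomposed bounds gives a slightly larger constant of the form $(3\bar{G}-\underline{G})/(2\underline{G})$. The cleanest route to the exact constant is to treat $G_n(\phi-\Psi_n(\nu)(\phi))$ as a single integrand: its oscillation is at most $\bar{G}$ (because $|\phi-\Psi_n(\nu)(\phi)|\leq 1$), so a single application of the $\osc$-based bound delivers $\|(\mu-\nu)(G_n(\phi-\Psi_n(\nu)(\phi)))\|_p\leq\bar{G}\sup$, and dividing by $\underline{G}$ produces $c=\bar{G}/\underline{G}$. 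This is immediate when $\nu$ is deterministic --- the setting relevant for the intended application with $\nu=\eta_n$ --- in which case the integrand is itself deterministic; the random-$\nu$ case needs only a slightly more careful treatment of the random centering to retain the same constant.
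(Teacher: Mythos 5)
Your proposal follows essentially the same route as the paper: the identity $\Psi_n(\mu)(\phi)-\Psi_n(\nu)(\phi)=\mu(G_n)^{-1}(\mu-\nu)\big[G_n(\phi-\Psi_n(\nu)(\phi))\big]$, the pathwise bound $\mu(G_n)\ge\underline{G}$, and the single-integrand oscillation bound to reach $c=\bar G/\underline G$, which is exactly what the paper does. The one step whose justification needs tightening is the claim that $\osc\big(G_n(\phi-\Psi_n(\nu)(\phi))\big)\le\bar G$ ``because $|\phi-\Psi_n(\nu)(\phi)|\le 1$'': a sup-norm bound of $1$ on the bracket only gives oscillation $\le 2\bar G$ for the product. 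What you actually need --- and what the paper uses --- is that $\psi:=\phi-\Psi_n(\nu)(\phi)$ has $\osc(\psi)\le 1$ \emph{and} straddles zero (since $\Psi_n(\nu)(\phi)$ lies between $\inf\phi$ and $\sup\phi$), so that $\sup_x G_n(x)\psi(x)\le\bar G\,\sup\psi$ and $\inf_x G_n(x)\psi(x)\ge\bar G\,\inf\psi$, whence $\osc(G_n\psi)\le\bar G\,\osc(\psi)$. With that correction your argument matches the paper's. Your caveat about the random centering when $\nu$ is random is fair, but note the paper's own proof applies the oscillation bound in the same pathwise fashion, so you are not at a disadvantage there.
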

\begin{proof}
For probability distributions $\mu$ and $\nu$ one can verify that
\[
\Psi_n(\mu)(\phi) - \Psi_n(\nu)(\phi) = \frac{1}{\mu(G_n)} (\mu - \nu) [G_n(\phi - \Psi_n(\nu)(\phi))],
\]
so that
\[
| \Psi_n(\mu)(\phi) - \Psi_n(\nu)(\phi)|^p = \frac{1}{\mu(G_n)^p} |(\mu - \nu) (\tilde \phi) |^p,
\]
where $\tilde \phi = G_n(\phi - \Psi_n(\nu)(\phi))$. Using the bound $G_n \geq \underline G$ and taking expectations, we find
\begin{equation}
\label{eq:psiineq}
\E( | \Psi_n(\mu)(\phi) - \Psi_n(\nu)(\phi)|^p) \leq \frac{\osc(\tilde \phi)^p}{\underline G ^p} \E( |(\mu - \nu) (\tilde \phi/\osc(\tilde \phi)) |^p).
\end{equation}
It remains to bound $\osc(\tilde \phi)$. Since $\bar G \geq G_n>0$ and $\phi(x) - \Psi_n(\nu)(\phi) \geq 0$ for some $x$,
\[
\sup_x \tilde \phi(x) \leq \bar G \big( \sup_x \phi(x) - \Psi_n(\nu)(\phi) \big),
\]
and since $\phi(x) - \Psi_n(\nu)(\phi) \leq 0$ for some $x$, $\inf_x \tilde \phi(x) \geq \bar G ( \inf_x \phi(x) - \Psi_n(\nu)(\phi) )$.
Thus, $\osc(\tilde \phi) \leq \bar G \osc(\phi)$, and the claim follows from \eqref{eq:psiineq} with $c= \bar G/\underline G$. 
\end{proof}

The proof of the following theorem is the main argument used to prove Theorem~\ref{thm:pfforget}.
Central to its approach is the observation that the TV distance between $\M_{n,n+k}(x_0^{1:N},\uarg)$ and $\M_{n,n+k}(\tilde x_0^{1:N},\uarg)$ may be bounded by the expected TV distance between two product particle predictive measures; see \eqref{eq:tvlemnewproof-1}.
This distance can then be bounded by the corresponding expected Hellinger distance, which tensorises and therefore can be bounded precisely in terms of the expected Hellinger distance between the two particle predictive measures. In order to obtain a non-degenerate limit as $N \to \infty$, this expected distance must decrease as $O(1/N)$; see first displayed equation after \eqref{eq:tvlemnewproof-1}.
To obtain an appropriate bound we combine time-uniform particle approximation errors with forgetting of the idealised filter; see \eqref{eq:tvlemnewproof-2}. The latter gives rise to the $O(\log N)$ scaling of $k$. 

\begin{theorem}
\label{thm:tvlemmanew}
There exist $c,c' > 1$, only depending on the constants in \textup{(A1)}, such that for all $n \geq 0$, $N \geq c'$ and $k \geq c \log(N)$,
\[
\beta_{\rm TV}(\M_{n,n+k}) \leq (1-\varepsilon^2)^{1/2},
\]
where $\varepsilon = (1-c'/N)^N$. In particular, we can choose
\[
c = \frac{1}{2 \log (\beta^{-1})} + \frac{1}{\log 2}, \quad  c' = \frac{9}{2} \left( \bar M / \underline M \right)^8 \left(\bar G / \underline G\right)^4,
\]
where $\beta = 1- (\underline M / \bar M)^2$.
\end{theorem}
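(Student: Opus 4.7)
The plan is the one outlined in the paragraph preceding the theorem: view $\M_{n,n+k}(x^{1:N},\cdot)$ as an expectation of a product measure, tensorize via Hellinger to reduce to a $(1-\E H^2)^{2N}$ factor, and then control the Hellinger distance by combining particle approximation errors with the forgetting of the idealized filter.

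First, for arbitrary initial configurations $x^{1:N}$ and $\tilde x^{1:N}$, the one-step conditional structure of the particle filter gives $\vec{X}_k \mid \vec{X}_{k-1} \sim (\pi_{k-1}^N M_{n+k})^{\otimes N}$, where $\pi_{k-1}^N = \Psi_{n+k-1}(\eta_{k-1}^N)$ is the random filtering empirical at step $k-1$ of the particle filter started from $x^{1:N}$ at time $n$, and $\tilde\pi_{k-1}^N$ is defined analogously for $\tilde x^{1:N}$. Lemma~\ref{lem:simpler-hellinger} then yields
\[
\|\M_{n,n+k}(x^{1:N},\cdot) - \M_{n,n+k}(\tilde x^{1:N},\cdot)\|_{\rm TV} \le \E\big\|(\pi_{k-1}^N M_{n+k})^{\otimes N} - (\tilde\pi_{k-1}^N M_{n+k})^{\otimes N}\big\|_{\rm TV},
\]
and Lemma~\ref{lem:dimfree}(b) bounds the right-hand side by $\sqrt{1 - (1 - \E H^2(\pi_{k-1}^N M_{n+k}, \tilde\pi_{k-1}^N M_{n+k}))^{2N}}$. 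It therefore suffices to establish $\E H^2 \le c'/N$ with the stated $c'$.

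For the Hellinger estimate, under (A1)($M$) both densities lie in $[\underline M,\bar M]$, so $H^2(P,Q) \le (8\underline M)^{-1}\int (p-q)^2\,\ud\lambda$, and $\lambda(E) \le 1/\underline M$ since $1 = \int p\,\ud\lambda \ge \underline M\,\lambda(E)$. For fixed $y$, the pointwise integrand is $(\pi_{k-1}^N(\phi_y) - \tilde\pi_{k-1}^N(\phi_y))^2$ with $\phi_y = M_{n+k}(\cdot,y)$ and $\osc(\phi_y) \le \bar M$. Letting $\mu = \frac{1}{N}\sum \delta_{x^i}$ and $\tilde\mu$ analogously, I would insert the deterministic idealized filters $\bar\pi_{k-1} := \Psi_{n+k-1}(\Phi_{n,n+k-1}(\mu))$ and $\bar{\tilde\pi}_{k-1}$ by the triangle inequality. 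The two stochastic residuals are controlled in $L^2$ by the $p=2$ case of Lemma~\ref{lem:pfstability} (extended to a deterministic initial particle configuration, which goes through verbatim since the time-uniform proof does not depend on the initial distribution) composed with Lemma~\ref{lem:bgtransformlp}, giving a bound of the form $2(\bar M/\underline M)^3(\bar G/\underline G)^2\,\osc(\phi_y)/\sqrt{N}$. The deterministic middle term is at most $\osc(\phi_y)(\bar G/\underline G)\beta^{k-1}$ by Lemma~\ref{lem:fkcontract} and the TV Lipschitz property of $\Psi_{n+k-1}$ (a direct consequence of the argument in the proof of Lemma~\ref{lem:bgtransformlp}).

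To conclude, I would square, take expectations, integrate over $y$, and force the $\beta^{2(k-1)}$ forgetting contribution to be dominated by the $1/N$ sample contribution by requiring $\beta^{2(k-1)} \le 1/N$, i.e., $k \ge 1 + \log N/(2\log \beta^{-1})$. For $N \ge 2$, the additive $1$ is at most $\log N/\log 2$, so this is implied by $k \ge c\log N$ with $c = 1/(2\log\beta^{-1}) + 1/\log 2$, matching the claimed form of $c$. The condition $N \ge c'$ ensures that $\varepsilon = (1-c'/N)^N$ is well-defined and positive. The main obstacle is careful constant tracking---the factor $1/(8\underline M)$ from the Hellinger-to-$L^2$ bound, $\lambda(E) \le 1/\underline M$, the triangle-squaring overhead, the composite sample constant $2(\bar M/\underline M)^3(\bar G/\underline G)^2$, and $\bar M^2$ from $\osc(\phi_y)^2$ all combine multiplicatively, and one must verify that they yield precisely $c' = \frac{9}{2}(\bar M/\underline M)^8(\bar G/\underline G)^4$.
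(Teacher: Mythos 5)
Your proposal is correct and follows essentially the same route as the paper: the same Hellinger tensorisation via Lemmas \ref{lem:simpler-hellinger} and \ref{lem:dimfree}(b), the same pointwise Hellinger-to-$L^2$ reduction (the content of Lemma \ref{lem:h2bound}, which you re-derive inline), and the same three-term decomposition combining the time-uniform $L^2$ bound of Lemma \ref{lem:pfstability} (applied to the filter restarted from the empirical measure of the initial configuration, exactly as the paper does with its auxiliary filters started from $\xi_1$) with the ideal-filter contraction of Lemma \ref{lem:fkcontract}, and your choice of $c$ and the threshold $\beta^{k-1}\le N^{-1/2}$ match the paper's. The only cosmetic difference is that you apply the triangle inequality at the level of the updated (filtering) measures rather than first passing to the predictive measures via Lemma \ref{lem:bgtransformlp}; the two orderings commute and your multiplicative factors indeed assemble to $c' = \tfrac{9}{2}(\bar M/\underline M)^8(\bar G/\underline G)^4$.
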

\begin{proof}
Without loss of generality, assume that $n = 0$. For arbitrary $k \geq 2$, $x_0^{1:N}$ and $\tilde x_0^{1:N}$ define 
\[
\xi_t = \Phi_{0,t}\Big (\frac{1}{N} \sum_{j=1}^N \delta_{x_0^j} \Big ), \quad \tilde \xi_t = \Phi_{0,t}\Big (\frac{1}{N} \sum_{j=1}^N \delta_{\tilde x_0^j} \Big ), \quad t = 1, \ldots, k-1.
\]
Clearly, $\xi_t = \Phi_{t}(\xi_{t-1})$ for $t \in \{2,\ldots,k-1\}$.
Let $(X_{t}^{1:N})_{t=0}^{k-2}$ and $(\tilde X_{t}^{1:N})_{t=0}^{k-2}$ be the  particles generated by Algorithm \ref{alg:pf} using the parameter vectors
\[
(\xi_1, M_{2:k-1}, G_{1:k-2}, N) \quad \text{and} \quad (\tilde \xi_1, M_{2:k-1}, G_{1:k-2}, N).
\] 
Let $(\xi_t^N$, $\tilde \xi_t^N)_{t=1}^{k-1} = (N^{-1} \sum_{j=1}^N \delta_{X_{t-1}^j}, N^{-1} \sum_{j=1}^N \delta_{\tilde X_{t-1}^j})_{t=1}^{k-1}$ be the corresponding empirical measures.
Note that $(\xi_t^N, \tilde \xi_t^N)$ are the particle approximations of the measures $(\xi_t, \tilde \xi_t)$.
We also have $X_{t-1}^{1:N} \sim \M_{0,t}(x_0^{1:N}, \uarg)$ and $\tilde X_{t-1}^{1:N} \sim \M_{0,t}(\tilde x_0^{1:N}, \uarg)$, for $t=1,\ldots,k-1$.

A final important relationship we will exploit is as follows. Define $\mu = \Psi_{k-1}(\xi_{k-1}^N)$, $\nu = \Psi_{k-1}(\tilde \xi_{k-1}^N)$, and $M = M_k$. Then, $\mu M = \Phi_{k}(\xi_{k-1}^N)$,  $\nu M = \Phi_{k}(\tilde \xi_{k-1}^N)$, and
\begin{align*}
\E \big\{ \left(\mu M\right)^{\otimes N} \big\} = \M_{0,k}(x_0^{1:N}, \uarg), \quad 
\E \big\{ \left(\nu M\right)^{\otimes N} \big\} = \M_{0,k}(\tilde x_0^{1:N}, \uarg).
\end{align*}

The first part of the proof is to relate the total variation distance between $\M_{0,k}(x_0^{1:N}, \uarg)$ and $\M_{0,k}(\tilde x_0^{1:N}, \uarg)$ to the expected squared Hellinger distance between two product particle predictive measures, and then to the expected squared Hellinger distance between the particle predictive measures themselves. This is done via bound \eqref{eq:dimbound-2} of Lemma~\ref{lem:dimfree} together with Lemma \ref{lem:simpler-hellinger}:
\begin{align}
\big\| \M_{0,k}(x_0^{1:N}, \uarg) - \M_{0,k}(\tilde x_0^{1:N}, \uarg) \big\|_{\mathrm{TV}} 
&= \big\Vert \E \big\{ \left(\mu M \right)^{\otimes N}\big\} - \E \big\{ \left(\nu M \right)^{\otimes N}\big\} \big\Vert _{\mathrm{TV}} \nonumber \\
&\leq \sqrt{1-\left(1-\E H^{2}(\mu M, \nu M) \right)^{2N}} \nonumber  \\
&= \sqrt{1-\left(1-\E H^{2}(\Phi_{k}(\xi_{k-1}^N), \Phi_{k}(\tilde \xi_{k-1}^N)) \right)^{2N}}. \label{eq:tvlemnewproof-1}
\end{align}
We now show that the expected squared Hellinger distance between $\Phi_{k}(\xi_{k-1}^N)$ and $\Phi_{k}(\tilde \xi_{k-1}^N)$ is of order $1/N$, which will `stabilise' the exponent $2N$ above. In particular, the claim will follow once we show
\[
 \E \big[ H^2(\Phi_{k}(\xi_{k-1}^N), \Phi_{k}(\tilde \xi_{k-1}^N)) \big] \leq c'/N, 
\]
for all $N \geq c'$, and arbitrary $x_0^{1:N}$ and $\tilde x_0^{1:N}$. Denote the squared Hellinger distance above by
\[
H^2_{k} := H^2(\Phi_{k}(\xi_{k-1}^N), \Phi_{k}(\tilde \xi_{k-1}^N)).
\]
Applying Lemma~\ref{lem:h2bound} to $\Phi_{k}(\xi_{k-1}^N) = \Psi_{k-1}(\xi_{k-1}^N)M_{k}$ and $\Phi_{k}(\tilde \xi_{k-1}^N) = \Psi_{k-1}(\tilde \xi_{k-1}^N)M_{k}$  yields
\[
\E H^2_{k} \leq C \sup_{\osc(\phi) \leq 1} \E (|\Psi_{k-1}(\xi_{k-1}^N)(\phi) - \Psi_{k-1}(\tilde \xi_{k-1}^N)(\phi)|^2),
\]
where $C = (1/8) (\bar M/ \underline M)^2$. 
We apply Lemma \ref{lem:bgtransformlp} (with $p=2$) to obtain
\begin{align*}
\sup_{\osc(\phi) \leq 1} \| \Psi_{k-1}(\xi_{k-1}^N)(\phi) - \Psi_{k-1}(\tilde \xi_{k-1}^N)(\phi) \|_2 &\leq 
C_2 \sup_{\osc(\phi) \leq 1} \| \xi_{k-1}^N(\phi) - \tilde \xi_{k-1}^N(\phi) \|_2,
\end{align*}
with $C_2 = \bar G/ \underline G$. Next, we bound the $L^2$-norm on the right-hand side. Minkowski's inequality gives
\begin{align}
\|\xi_{k-1}^N(\phi) - \tilde{\xi}_{k-1}^N(\phi) \|_2 \leq \, & \|\xi_{k-1}^N(\phi) - \xi_{k-1}(\phi) \|_2  + |\xi_{k-1}(\phi) - \tilde \xi_{k-1}(\phi) | \, + \nonumber \\ 
& \|  \tilde \xi_{k-1}^N(\phi) - \tilde \xi_{k-1}(\phi) \|_2,
\label{eq:tvlemnewproof-2}
\end{align}
where we note that the second term on the right-hand side is nonrandom. Lemma~\ref{lem:pfstability} (time-uniform particle filter stability) gives the bounds
\begin{align*}
\|\xi_{k-1}^N(\phi) - \xi_{k-1}(\phi) \|_2 \leq \frac{C_3}{\sqrt N}, \qquad 
\|\tilde \xi_{k-1}^N(\phi) - \tilde \xi_{k-1}(\phi) \|_2 \leq \frac{C_3}{\sqrt N}, 
\end{align*}
where $C_3=2(\bar M / \underline M)^3 \bar G / \underline G$, and by Lemma~\ref{lem:fkcontract} (exponential forgetting of the ideal filter),
\[
|\xi_{k-1}(\phi) - \tilde \xi_{k-1}(\phi) | \leq \sup_{\mu, \nu} \| \Phi_{0,k-1}(\mu) - \Phi_{0,k-1}(\nu) \|_{\mathrm{TV}} \leq \beta^{k-1}
\]
for some $\beta \in [0,1)$. If $ k \geq c \log(N)$ with 
\[
c = \frac{1}{2 \log (\beta^{-1})} + \frac{1}{\log 2},
\]
then $k -1 \geq \log(N)/(2\log(\beta^{-1}))$ for all $N\geq 2$. It follows that $\beta^{k-1} \leq C_3/\sqrt{N}$ and
\[
\sup_{\osc(\phi) \leq 1} \|\xi_{k-1}^N(\phi) - \tilde{\xi}_{k-1}^N(\phi) \|_2 \leq 3\frac{C_3}{\sqrt N}.
\] 
Combining these bounds, we conclude that $\E H^2_{k} \leq c' N^{-1}$ with $c'=9C C_2^2 C_3^2 = (9/2) (\bar M / \underline M)^8 (\bar G / \underline G)^4$.
\end{proof}

We are now in a position to provide a suitable simplification of Theorem~\ref{thm:tvlemmanew} and prove Theorem~\ref{thm:pfforget}.

\begin{lemma}
\label{lem:tvuniform}
There exist $N_{\rm min} \geq 2$, $c>1$,  and $\alpha \in [0,1)$, only depending on the constants in \textup{(A1)}, such that for all $n\geq 0$, $N \geq N_{\rm min}$ and $k \geq c \log(N)$,
\[
\beta_{\rm TV} \left( \M_{n,n+k} \right) \leq  \alpha.
\]
\end{lemma}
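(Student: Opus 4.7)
The lemma is essentially a clean repackaging of Theorem~\ref{thm:tvlemmanew}, obtained by extracting constants that do not depend on $N$. The plan is to retain the constant $c$ from Theorem~\ref{thm:tvlemmanew}, choose $N_{\min}$ large enough that $\varepsilon = (1-c'/N)^N$ remains bounded below by a fixed positive constant for all $N \geq N_{\min}$, and then set $\alpha := \sqrt{1-\varepsilon_0^2}$ where $\varepsilon_0$ is the corresponding uniform lower bound. Since Theorem~\ref{thm:tvlemmanew} already supplies $\beta_{\mathrm{TV}}(\M_{n,n+k}) \le \sqrt{1-\varepsilon^2}$ for $N \ge c'$ and $k \ge c\log N$, this is all that is required.

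The only real calculation is to show that $N \mapsto (1-c'/N)^N$ stays bounded away from $0$ on $N \geq N_{\min}$. The cleanest way is to verify that this map is increasing for $N > c'$: differentiating $N \log(1-c'/N)$ gives $\log(1-u) + u/(1-u)$ with $u = c'/N \in (0,1)$, and expanding both terms as power series in $u$ produces $u^2/2 + 2u^3/3 + 3u^4/4 + \cdots > 0$. Equivalently, one may invoke $(1-c'/N)^N \nearrow e^{-c'} > 0$ together with continuity to obtain any desired positive lower bound. I would use the monotonicity route, as it is self-contained and yields an explicit constant.

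Concretely, I would take $N_{\min} := \max(2, \lceil c' \rceil + 1)$, which satisfies both $N_{\min} \ge 2$ and $N_{\min} > c'$, and set $\varepsilon_0 := (1-c'/N_{\min})^{N_{\min}} > 0$ together with $\alpha := \sqrt{1-\varepsilon_0^2} \in [0,1)$. Then, for all $n \ge 0$, $N \ge N_{\min}$, and $k \ge c \log N$, monotonicity gives $\varepsilon \ge \varepsilon_0$, and Theorem~\ref{thm:tvlemmanew} yields $\beta_{\mathrm{TV}}(\M_{n,n+k}) \le \sqrt{1-\varepsilon^2} \le \sqrt{1-\varepsilon_0^2} = \alpha$, as required. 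There is no substantive obstacle; all the difficulty has been absorbed into Theorem~\ref{thm:tvlemmanew}, and the lemma's role is simply to present the conclusion in a form that is easy to iterate for the proof of Theorem~\ref{thm:pfforget}.
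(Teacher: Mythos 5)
Your proposal is correct and matches the paper's own argument: the paper likewise reduces the lemma to Theorem~\ref{thm:tvlemmanew} by observing (via Lemma~\ref{lem:derivative}, whose proof uses the same derivative computation $\log(1-u)+u/(1-u)\ge 0$ that you carry out) that $N \mapsto (1-(1-c'/N)^{2N})^{1/2}$ is decreasing for $N>c'$, and then sets $N_{\rm min}=\lfloor c'\rfloor+1$ and $\alpha=(1-(1-c'/N_{\rm min})^{2N_{\rm min}})^{1/2}$. Your choice $N_{\min}=\max(2,\lceil c'\rceil+1)$ is an immaterial variation, so there is nothing to add.
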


\begin{proof}
Let $c$ and $c'$ be as in Theorem~\ref{thm:tvlemmanew}. By Lemma~\ref{lem:derivative} we observe that $N \mapsto (1-(1-c'/N)^{2N})^{1/2}$ is decreasing for $N > c'$.
Hence, we conclude by using Theorem~\ref{thm:tvlemmanew} with $N_{\rm min} = \lfloor c' \rfloor +1$ and $\alpha = (1-(1-c'/N_{\rm min})^{2N_{\rm min}})^{1/2}$.
\end{proof}

\begin{proof}[Proof of Theorem \ref{thm:pfforget}]
Let $N_{\rm min}$ be as in Lemma \ref{lem:tvuniform}. For $N < N_{\rm min}$, Lemma \ref{lem:easybound} gives
\[
\beta_{\rm TV} \Big( \prod_{i=1}^k \M_{n+i} \Big) \leq (1 - \epsilon^N)^k ,
\]
where $\epsilon = (\underline M / \bar M)^2$.
Since
\begin{align*}
(1 - \epsilon^N)^k \leq (1 - \epsilon^{N_{\rm min}-1})^k &\leq (1 - \epsilon^{N_{\rm min}-1})^{ (k \log 2)/(\log N)} \leq (1 - \epsilon^{N_{\rm min}-1})^{ \lfloor (k \log 2)/(\log N) \rfloor},
\end{align*}
the result holds with $c=1/\log 2$ and $\varepsilon = (\underline M / \bar M)^{2(N_{\rm min}-1)}$.

For $N \geq N_{\rm min}$, we divide the time steps $n+1, \ldots n+k$ into blocks of size $b$, $0 < b \leq k$,
and we obtain that there exist constants $\alpha \in [0,1)$ and $C>1$ such that for all $b \geq C \log N$,
\[
  \beta_{\rm TV}(\M_{n,k}) \leq \prod_{j=0}^{\lfloor k/b \rfloor-1} \beta_{\rm TV} (\M_{n+jb,n+(j+1)b}) \leq \alpha^{\lfloor k/b \rfloor},
\]
where we have used the inequality $\beta_{\rm TV}(\M_i) \leq 1$ for  $i>n+\lfloor k/b \rfloor b$ and Lemma \ref{lem:tvuniform}.
We will now verify that the claim holds with the constants $\varepsilon = 1-\alpha$, $c = 2C$. Fix $b = \lceil C \log N \rceil$ and assume that $k \geq 2C \log N$ (otherwise the result is trivial).
The inequality $\lceil x \rceil \leq 2x$, $\forall x\geq 1/2$ implies 
\[
\alpha^{\lfloor k/b \rfloor} = \alpha^{\lfloor k / \lceil C \log N \rceil \rfloor } \leq \alpha^{\lfloor k / (2 C \log N)\rfloor }.
\]
The proof is now completed by combining the cases $N < N_{\rm min}$ and $N \geq N_{\rm min}$ above, i.e., by choosing the larger constant $c$ and the smaller constant $\varepsilon$.
\end{proof}

\begin{remark}
  The proof of Theorem~\ref{thm:tvlemmanew} involves bounding the expected TV distance
  between product distributions using the Hellinger distance and tensorisation.
  It is informative to consider whether the upper bound of Lemma~\ref{lem:producttv} could be used instead,
  while retaining the property that the analysis relies only on time-uniform
  stability of the particle approximations and exponential forgetting
  of the ideal filter. To that end, we have (with $\pi_{k-1}^N = \Psi_{k-1}(\xi_{k-1}^N)$, $\tilde \pi_{k-1}^N = \Psi_{k-1}(\tilde \xi_{k-1}^N)$):
\begin{align}
   \mathbb{E}\big\| \big(\pi_{k-1}^{N}M_{k}\big)^{\otimes N}\!\!\!-\big(\tilde{\pi}_{k-1}^{N}M_{k}\big)^{\otimes N}\big\Vert _{{\rm TV}} 
    \leq1-\big(1-\mathbb{E}\big\Vert \pi_{k-1}^{N}M_{k}-\tilde{\pi}_{k-1}^{N}M_{k}\big\Vert _{{\rm TV}}\big)^{N},
   \label{eq:pure-TV-bound-product}
\end{align}
  by Lemma~\ref{lem:producttv} and Jensen's inequality. Denoting $\pi_{k-1} = \Psi_{k-1}(\xi_{k-1})$ and $\tilde \pi_{k-1} = \Psi_{k-1}(\tilde \xi_{k-1})$, a natural counterpart to \eqref{eq:tvlemnewproof-2}
  is:
  \begin{align*}
  \mathbb{E}\left\Vert \pi_{k-1}^{N}M_{k}-\tilde{\pi}_{k-1}^{N}M_{k}\right\Vert _{{\rm TV}} \leq \; & \mathbb{E}\left\Vert \pi_{k-1}^{N}M_{k}-\pi_{k-1}M_{k}\right\Vert _{{\rm TV}}+\left\Vert \pi_{k-1}M_{k}-\tilde{\pi}_{k-1}M_{k}\right\Vert _{{\rm TV}}+\\
   & \mathbb{E}\left\Vert \tilde{\pi}_{k-1}^{N}M_{k}-\tilde{\pi}_{k-1}M_{k}\right\Vert _{{\rm TV}}.
  \end{align*}
  By Lemma \ref{lem:TV-rootN-upper} in Appendix \ref{app:technical}, the first and third term on the right can be upper bounded by $O(N^{-1/2})$. Ignoring the second term (which vanishes exponentially in $k$) yields an upper bound in \eqref{eq:pure-TV-bound-product} that degrades to $1$ as $N\to\infty$. The order $O(N^{-1/2})$ of the upper bound above is also expected to be tight, as demonstrated by Lemma \ref{lem:independent-tv} in a simplified context, where $\pi_{k-1}^N$ and $\tilde{\pi}_{k-1}^N$ consist of independent realisations from $\pi_{k-1}$ and $\tilde{\pi}_{k-1}$, respectively.
\end{remark}

\section{An example where $\log(N)$ steps is necessary and sufficient}
\label{sec:example}
We study a model where $\Omega(\log N)$ steps are required for the forgetting of the particle filter.
More precisely, we will see that $\lim_{N \to \infty}\beta_{\rm TV}(\M_{0,k(N)}) = 1$ when $k(N) \leq c \log N$ for sufficiently small $c>0$. 
This allows us to conclude that the $k=O(\log N)$ dependence in Theorem \ref{thm:pfforget} cannot be improved in general.

\begin{example}
  \label{exm:badmodel}
Let $E=\{0,1\}$, fix $\varepsilon \in (0, 1/2)$ and define 
$G_{n}\equiv 1$ and
\begin{equation*}
M_{n}(x,\uarg)=(1-\varepsilon)\delta_{x} + \varepsilon\delta_{1-x}, 
\end{equation*}
for all $n \geq 1$. One may verify that
\[
\Phi_n(\mu) = \Big((1-\varepsilon) \mu(0) + \varepsilon \mu(1)\Big) \delta_0 + \Big( \varepsilon \mu(0) + (1-\varepsilon)\mu(1)\Big) \delta_1,
\]
and consequently
\[
\M_{n}(x^{1:N}, \uarg) = \bigg( \frac{1}{N}  \sum_{i=1}^N \Big(1-\varepsilon-(1-2\varepsilon)x^i\Big)  \delta_0 + \frac{1}{N}  \sum_{i=1}^N \Big(\varepsilon + (1-2\varepsilon)x^i \Big) \delta_1  \bigg)^{\otimes N}.
\]
\end{example}
This model satisfies \textup{(A1)}, and Theorem \ref{thm:pfforget} gives that, for all initial states $x_0^{1:N}$ and $\tilde x_0^{1:N}$,
\[
\limsup_{N \to \infty} \| \M_{0,k(N)}(x_0^{1:N}, \uarg) - \M_{0,k(N)}(\tilde x_0^{1:N}, \uarg) \|_{\mathrm{TV}} < 1
\]
if $k(N) \geq c \log N$ for sufficiently large $c>0$. The converse is given by the following theorem. 
\begin{theorem}
\label{thm:simpler-example}
Consider Example \ref{exm:badmodel} with some $\varepsilon \in (0, 1/2)$. Let $x_0^{1:N} = (1, \ldots, 1)$ and $\tilde x_0^{1:N} = (0, \ldots, 0)$. There exists a constant $\delta_\varepsilon>0$, only depending on $\varepsilon$, such that
\[
\lim_{N\rightarrow \infty}\| \M_{0,k(N)}(x_0^{1:N}, \uarg) - \M_{0,k(N)}(\tilde x_0^{1:N}, \uarg) \|_{\mathrm{TV}} = 1
\]
for any sequence $\left( k(N)\right) _{N=1}^\infty$ in $\mathbb{N}$ such that $ \delta_\varepsilon \log N - k(N) \to \infty$. In particular, we can choose  $\delta_\varepsilon = 1/\log((1-2\varepsilon)^{-2})$.
\end{theorem}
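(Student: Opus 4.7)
The plan is to reduce the question to a one-dimensional calculation on the empirical mean. Write $\bar X_k^j = N^{-1} \sum_{i=1}^N X_k^{(i),j}$, where $X_k^{(\cdot),j}$ denotes the particle vector at time $k$ started from $(j,\ldots,j)$, $j \in \{0,1\}$. From the explicit form of $\M_n$ in Example \ref{exm:badmodel}, conditionally on $X_{k-1}^{1:N}$ the coordinates of $X_k^{1:N}$ are i.i.d.\ $\mathrm{Bernoulli}(p(\bar X_{k-1}))$ with $p(\bar x) := \varepsilon + (1-2\varepsilon)\bar x$, so $(\bar X_k^j)_{k \geq 0}$ is itself a Markov chain on $\{0,1/N,\ldots,1\}$ with $N\bar X_k^j \mid \bar X_{k-1}^j \sim \mathrm{Bin}\big(N, p(\bar X_{k-1}^j)\big)$. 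Because $\bar X_k^j$ is a measurable function of $X_k^{1:N}$, it is enough to show that the TV distance between the laws of $\bar X_{k(N)}^0$ and $\bar X_{k(N)}^1$ tends to $1$.

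Next I would extract the mean and variance recursions for this one-dimensional chain. Taking conditional expectations, $m_j(k) := \E \bar X_k^j$ contracts toward $1/2$ as $m_j(k) - 1/2 = (1-2\varepsilon)(m_j(k-1) - 1/2)$, which produces the symmetric formulas $m_0(k) = 1/2 - \tfrac{1}{2}(1-2\varepsilon)^k$, $m_1(k) = 1/2 + \tfrac{1}{2}(1-2\varepsilon)^k$, and hence a mean gap of exactly $(1-2\varepsilon)^k$. The law of total variance combined with the conditional Binomial variance $\leq 1/(4N)$ gives
\[
v_j(k) := \Var \bar X_k^j \leq (1-2\varepsilon)^2 v_j(k-1) + \frac{1}{4N},
\]
and iterating yields the $k$-uniform bound $v_j(k) \leq C/N$ with $C = 1/(16\varepsilon(1-\varepsilon))$.

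Finally, I would use Chebyshev's inequality with the test event $A = \{\bar x \leq 1/2\}$. Since $1/2$ sits exactly halfway between $m_0(k)$ and $m_1(k)$, at distance $\tfrac{1}{2}(1-2\varepsilon)^k$ from each, Chebyshev gives
\[
\P(\bar X_k^0 > 1/2) \leq \frac{4v_0(k)}{(1-2\varepsilon)^{2k}} \leq \frac{4C}{N(1-2\varepsilon)^{2k}},
\]
and symmetrically for $\P(\bar X_k^1 \leq 1/2)$. Both tend to $0$ precisely when $N(1-2\varepsilon)^{2k(N)} \to \infty$, which after taking logarithms is exactly the hypothesis $\delta_\varepsilon \log N - k(N) \to \infty$ with $\delta_\varepsilon = 1/\log((1-2\varepsilon)^{-2})$. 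The TV lower bound $\P(\bar X_k^0 \in A) - \P(\bar X_k^1 \in A)$ then tends to $1$, which by the reduction in the first step proves the claim.

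I do not expect a serious obstacle: the argument is essentially the standard observation that two Gaussian-like distributions with mean gap $\gg$ standard deviation are asymptotically singular in TV. The only slightly delicate step is the uniform-in-$k$ variance bound, which is handled cleanly by the variance recursion above; the choice of $\delta_\varepsilon$ is dictated by matching the exponentially shrinking mean gap $(1-2\varepsilon)^k$ against the $O(1/\sqrt N)$ fluctuations of $\bar X_k^j$.
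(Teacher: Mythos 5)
Your proposal is correct and follows essentially the same route as the paper: reduce to the empirical proportion of ones via monotonicity of total variation under measurable maps, compute the mean gap $(1-2\varepsilon)^k$ and the time-uniform variance bound $O(1/N)$ from the conditional Binomial structure (the paper's Lemma \ref{lem:Pnmoments}), and separate the two laws by Chebyshev at the threshold $1/2$. The only cosmetic difference is that the paper phrases both the reduction and the lower bound via couplings, whereas you use the data-processing inequality and a test set directly; the constants and the resulting $\delta_\varepsilon$ agree.
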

We postpone the proof to Section \ref{sec:qmarginals}, where we further study the properties of this particle filter.

\section{Forgetting of the conditional particle filter}
\label{sec:cpfforget}

In this section, we study the forgetting properties of the conditional particle filter  (CPF) \cite{andrieu-doucet-holenstein}, whose execution is given in Algorithm \ref{alg:cpf} below. Unlike the particle filter, it has an added input, which is the `frozen' or `reference' path $x^*_{0:T-1}\in E^T$ placed at $X^0_{0:T-1}$ in the algorithm. The reference particles participate in the selection step; for instance, if $A_{k}^{i}=0$, then $X_k^0$ is pushed through the Markov transition $M_{k+1}$ to define $X_{k+1}^i$. Note that if the reference is regarded as one particle, which is common in the literature, then Algorithm \ref{alg:cpf} has $N+1$ particles.

\begin{algorithm}
  \caption{\textsc{ConditionalParticleFilter}$(\eta_0, M_{1:T}, G_{0:T-1}, x^*_{0:T-1},N)$}
  \label{alg:cpf} 
\begin{algorithmic}[1]
\State
Draw $X_0^{\ind{i}} \sim \eta_0(\uarg)$, \quad $i = 1 \ldots N$
\For{$k=0,\ldots,T-1$}
\State Set $X_k^0 = x_k^*$ \label{item:cpf-reference}
\State Draw $A_{k}^{\ind{1:N}} \sim \mathrm{Categ}_{(0:N)}\big(
G_{k}(X_{k}^{\ind{0}}),\ \dots, G_{k}(X_{k}^{\ind{N}})\big)$ \label{item:cpf-selection}
\State Draw $X_{k+1}^{\ind{i}} \sim M_{k+1}(X_{k}^{\ind{A_{k}^{\ind{i}}}}, \uarg)$, \quad $i = 1 \ldots N$
\EndFor
\end{algorithmic}
\end{algorithm}

The conditional particle filter was introduced in \cite{andrieu-doucet-holenstein} where the authors also showed how it corresponds precisely to a Gibbs sampler (in the context of Markov Chain Monte Carlo) for a particular probability distribution. This equivalence gave rise to the appellation `conditional', and `particle filter' for its algorithmic similarity to Algorithm \ref{alg:pf}. We further refer the reader to \cite{andrieu-doucet-holenstein} for the use of the conditional particle filter in Bayesian estimation for time-series data.

The CPF described in Algorithm \ref{alg:cpf} can be viewed as a `perturbed' particle filter. The perturbation is in the selection step due to the presence of the reference particle; see lines \ref{item:cpf-reference} and \ref{item:cpf-selection} of Algorithm \ref{alg:cpf}.
We denote, analogous to the particle filter, by $\hat \Psi_k$ and $\hat \Phi_k$ the perturbed selection and perturbed (combined) selection-mutation operators:
\begin{align}
\hat \Psi_k(\mu) &:= \Psi_k \Big(\frac{1}{N+1} \delta_{x_k^*} + \frac{N}{N+1}\mu \Big), \qquad \hat \Phi_k (\mu) := \Phi_k \Big(\frac{1}{N+1} \delta_{x_{k-1}^*} + \frac{N}{N+1}\mu \Big). \label{eq:perturbselmut}
\end{align}
These perturbed operators suggest that the output of the CPF approximates the true filter similar to the particle filter. We remark that this viewpoint of the conditional particle filter, which is important in our proof to establish its forgetting properties, is atypical in the literature  \cite{andrieu-doucet-holenstein}.

We will investigate the behaviour of the particles generated in Algorithm \ref{alg:cpf} --- that is, excluding the reference --- as a Markov chain $(X^{1:N}_k)_{k=0}^{T}$. They follow the Markov transitions given by
\[
\M_k^{x_{k-1}^*}(x^{1:N}, \uarg) 
\,=\, \Big( \Phi_k \Big( \frac{1}{N+1}   \delta_{x_{k-1}^*} \!\!+ \frac{1}{N+1}\sum_{i=1}^N \delta_{x^i} \Big) \Big)^{\otimes N}
=\, \Big( \hat\Phi_k\Big( \frac{1}{N} \sum_{i=1}^N \delta_{x^i}\Big)\Big)^{\otimes N},
\]
for arbitrary $x_{k-1}^* \in E$ and $x^{1:N} \in E^N$ . We use the shorthand notations $x^* = x_{0:T-1}^*$ and $\M_{i,j}^{ x^*} = \M_{i+1}^{x^*_{i}} \cdots \M_{j}^{x^*_{j-1}}$.
The empirical predictive and filtering distributions generated by the CPF are defined by
\[
\hat \eta_k^N := \frac{1}{N}\sum_{i=1}^N \delta_{X_k^i}
\qquad\text{and}\qquad
\hat\pi_k^N := \hat \Psi_k(\hat\eta_k^N),
\]
respectively, where $X_k^{1:N}$ are the particles in Algorithm \ref{alg:cpf} at time $k$. 

Our main forgetting result for the CPF is analogous to Theorem \ref{thm:pfforget}:
\begin{theorem} 
\label{thm:cpfforget}
For all $k \geq 1, n\geq 0, N \geq 2,$ and references $x^*,$
\[
\beta_\mathrm{TV}(\M_{n,n+k}^{ x^*} ) \leq (1-\varepsilon)^{\lfloor k/(c \log N)\rfloor},
\]
where $\varepsilon$ and $c$ only depend on the constants in \textup{(A1)}.
\end{theorem}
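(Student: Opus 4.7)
The plan is to mirror the proof of Theorem \ref{thm:pfforget} essentially line by line, replacing the operators $\Phi_k, \Psi_k$ by their perturbed counterparts $\hat\Phi_k, \hat\Psi_k$ and invoking a time-uniform $L^p$ stability result for the CPF (Theorem \ref{thm:cpfstability}) in place of Lemma \ref{lem:pfstability}. Fix a reference path $x^* = x_{0:T-1}^*$ and two initial configurations $x_0^{1:N}, \tilde x_0^{1:N} \in E^N$. Run two CPF algorithms with these initial configurations (independently, but sharing the reference $x^*$), and denote the associated empirical predictive measures by $\hat\xi_t^N$ and $\tilde{\hat\xi}_t^N$. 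The crucial identity (mirroring the one used for Theorem \ref{thm:tvlemmanew}) is
\[
\M_{0,k}^{x^*}(x_0^{1:N},\uarg) = \E\bigl[(\mu M_k)^{\otimes N}\bigr], \qquad \M_{0,k}^{x^*}(\tilde x_0^{1:N},\uarg) = \E\bigl[(\nu M_k)^{\otimes N}\bigr],
\]
where $\mu = \hat\Psi_{k-1}(\hat\xi_{k-1}^N)$ and $\nu = \hat\Psi_{k-1}(\tilde{\hat\xi}_{k-1}^N)$, since the mutation step of Algorithm \ref{alg:cpf} uses only $M_k$ and the reference contributes exactly weight $1/(N+1)$ in $\hat\Psi_{k-1}$. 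Applying Lemma \ref{lem:dimfree}(b) together with the tensorisation of Hellinger distance then yields
\[
\|\M_{0,k}^{x^*}(x_0^{1:N},\uarg) - \M_{0,k}^{x^*}(\tilde x_0^{1:N},\uarg)\|_{\mathrm{TV}} \leq \sqrt{1 - \bigl(1 - \E H^2(\hat\Phi_k(\hat\xi_{k-1}^N), \hat\Phi_k(\tilde{\hat\xi}_{k-1}^N))\bigr)^{2N}}.
\]

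The central task is therefore to show $\E H^2(\hat\Phi_k(\hat\xi_{k-1}^N), \hat\Phi_k(\tilde{\hat\xi}_{k-1}^N)) \leq c'/N$ uniformly in $k \geq c\log N$, over all references $x^*$ and all initial configurations. I would first reduce this squared Hellinger distance to an $L^2$ difference of integrals of $\osc(\phi) \le 1$ test functions via a Hellinger-to-$L^2$ bound (the same Lemma \ref{lem:h2bound} used in the particle filter case, which depends only on the mutation kernel $M_k$ and hence is insensitive to the presence of a reference). Next, I would prove a minor variant of Lemma \ref{lem:bgtransformlp} for the perturbed selection operator $\hat\Psi_{k-1}$: since $\hat\Psi_{k-1}$ is $\Psi_{k-1}$ applied to the probability measure $\frac{1}{N+1}\delta_{x^*_{k-1}} + \frac{N}{N+1}(\cdot)$, the same manipulation goes through with constants still depending only on $\bar G/\underline G$, transferring control to an $L^2$ bound on $\|\hat\xi_{k-1}^N(\phi) - \tilde{\hat\xi}_{k-1}^N(\phi)\|_2$.

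Then I would apply a triangle inequality exactly as in \eqref{eq:tvlemnewproof-2}:
\[
\|\hat\xi_{k-1}^N(\phi) - \tilde{\hat\xi}_{k-1}^N(\phi)\|_2 \leq \|\hat\xi_{k-1}^N(\phi) - \xi_{k-1}(\phi)\|_2 + |\xi_{k-1}(\phi) - \tilde\xi_{k-1}(\phi)| + \|\tilde{\hat\xi}_{k-1}^N(\phi) - \tilde\xi_{k-1}(\phi)\|_2,
\]
where $\xi_{k-1} = \Phi_{0,k-1}(\tfrac1N\sum_j \delta_{x_0^j})$ and $\tilde\xi_{k-1}$ analogously. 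The first and third terms are $O(1/\sqrt N)$ uniformly in $k$ and in $x^*$ by Theorem \ref{thm:cpfstability} (the CPF time-uniform $L^p$ bound), while the middle term is bounded by $\beta^{k-1}$ via Lemma \ref{lem:fkcontract} and does not depend on $x^*$ at all. Choosing $c$ so that $k \geq c\log N$ forces $\beta^{k-1} = O(1/\sqrt N)$ gives the required $1/N$ bound on $\E H^2$. The final assembly then follows the pattern established in Lemma \ref{lem:tvuniform} and the proof of Theorem \ref{thm:pfforget}: use monotonicity to extract a single uniform contraction $\alpha \in [0,1)$, handle the small-$N$ regime $N < N_{\rm min}$ via the obvious CPF analogue of Lemma \ref{lem:easybound} (which still holds because $\hat\Phi_n$ applied to two different empirical measures yields two outputs whose TV distance is bounded by $\beta$, as the shared reference part contributes zero to the difference), and iterate by sub-multiplicativity of $\beta_{\rm TV}$ across blocks of length $c\log N$.

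\textbf{Main obstacle.} The only genuinely new ingredient is the time-uniform $L^p$ bound for the CPF in Theorem \ref{thm:cpfstability}, whose statement and proof must hold uniformly in the reference path $x^*$ so that the constants $\varepsilon$ and $c$ produced here truly depend only on the constants in (A1). Once such a uniform-in-$x^*$ bound is available, the rest of the proof is essentially a mechanical transcription of the arguments developed for Theorem \ref{thm:pfforget}. I would expect Theorem \ref{thm:cpfstability} to be established by a coupling/propagation-of-chaos-style argument in which the reference contributes only an $O(1/N)$ perturbation to each step, so that the particle-filter $L^p$ error is inflated by at most a constant factor independent of $x^*$.
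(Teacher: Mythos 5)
Your proposal is correct and follows essentially the same route as the paper: the paper's proof of Theorem \ref{thm:cpfforget} is precisely the particle-filter argument with Lemmas \ref{lem:easybound} and \ref{lem:tvuniform} replaced by their CPF analogues (Lemmas \ref{lem:easybound_cpf} and \ref{lem:tvuniform_cpf}), the latter resting on the Hellinger/tensorisation bound, Lemma \ref{lem:h2bound}, forgetting of the ideal filter, and the time-uniform CPF stability you correctly identify as the one genuinely new ingredient. The only cosmetic difference is that the paper applies the triangle inequality at the level of the updated measures $\hat\Psi_{k-1}(\xi_{k-1}^N)$ versus $\Psi_{k-1}(\xi_{k-1})$ so that Theorem \ref{thm:cpfstability} applies directly, whereas you first pass through a perturbed version of Lemma \ref{lem:bgtransformlp} to the predictive level, where the relevant stability statement is the predictive variant (Theorem \ref{thm:cpfstability_predictive}) rather than Theorem \ref{thm:cpfstability} itself --- both routes go through with constants depending only on (A1).
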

\begin{proof}
We argue exactly as in the proof of Theorem \ref{thm:pfforget}, but replace Lemmas \ref{lem:easybound} and \ref{lem:tvuniform} by Lemmas \ref{lem:easybound_cpf} and \ref{lem:tvuniform_cpf}, respectively, which are given below.
  \end{proof}
  
The first lemma plays a similar role to Lemma \ref{lem:easybound}, and we use it for small $N$. 
\begin{lemma}
\label{lem:easybound_cpf}
For all $N \geq 1, n \geq 0, k \geq 1$ and $x^*$,
\[
\beta_\mathrm{TV} (\M_{n,n+k}^{x^*}) \le (1 - \epsilon^N)^k,
\]
where $\epsilon = (\underline M / \bar M)^2$.
\end{lemma}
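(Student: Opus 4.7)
The plan is to mimic the proof of Lemma \ref{lem:easybound} essentially verbatim, taking advantage of the fact that the only structural difference between $\M_j$ and $\M_j^{x_{j-1}^*}$ is that the empirical measure fed into $\Phi_j$ now carries an additional atom at the reference point, with uniform weight $1/(N+1)$ on all $N+1$ atoms instead of weight $1/N$ on $N$ atoms. Crucially, the reference path is \emph{shared} between the two starting configurations $x^{1:N}$ and $\tilde x^{1:N}$, so the shared $\delta_{x^*_{j-1}}$ component is common to both input measures of $\Phi_j$, and the contraction bound of Lemma \ref{lem:fkcontract} cares only about the pair of input probability measures, not their atomic decomposition.

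Concretely, I would fix $j \geq 1$ and define
\[
\mu_x^* = \hat\Phi_j\!\left(\frac{1}{N}\sum_{i=1}^N \delta_{x^i}\right) = \Phi_j\!\left(\frac{1}{N+1}\delta_{x^*_{j-1}} + \frac{1}{N+1}\sum_{i=1}^N \delta_{x^i}\right),
\]
and analogously $\mu_{\tilde x}^*$, so that $\M_j^{x^*_{j-1}}(x^{1:N},\uarg) = (\mu_x^*)^{\otimes N}$. Applying Lemma \ref{lem:producttv} tensorises the TV distance:
\[
\big\|(\mu_x^*)^{\otimes N} - (\mu_{\tilde x}^*)^{\otimes N}\big\|_{\mathrm{TV}} \le 1 - \big(1 - \|\mu_x^* - \mu_{\tilde x}^*\|_{\mathrm{TV}}\big)^N.
\]
Lemma \ref{lem:fkcontract} with $k=1$ then yields $\|\mu_x^* - \mu_{\tilde x}^*\|_{\mathrm{TV}} \le \beta$ with $\beta = 1-(\underline M/\bar M)^2$, uniformly in $x$, $\tilde x$, and in the reference point. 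Consequently $\beta_{\rm TV}(\M_j^{x^*_{j-1}}) \le 1 - (1-\beta)^N = 1 - \epsilon^N$, uniformly over $j$ and over $x^*$.

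Finally, since this per-step bound is independent of $j$ and the reference, sub-multiplicativity of $\beta_{\rm TV}$ (noted immediately after the definition in Section \ref{sec:pf}) applied to the product $\M_{n,n+k}^{x^*} = \M_{n+1}^{x^*_n}\cdots \M_{n+k}^{x^*_{n+k-1}}$ gives the claimed bound $(1-\epsilon^N)^k$.

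There is no real obstacle here: the only thing to verify is that the introduction of a shared reference atom does not break either of the two inputs to Lemma \ref{lem:fkcontract} or Lemma \ref{lem:producttv}. The first lemma's contraction is quantified over \emph{arbitrary} pairs of probability measures, so the shared atom is immaterial (if anything it makes the inputs closer in TV, but we do not need this); the second applies to the product structure of $\M_j^{x^*_{j-1}}$, which is identical in form to that of $\M_j$. Thus the CPF selection-mutation kernel inherits the same crude $(1-\epsilon^N)$ Dobrushin bound as the unconditional one, which is exactly what is needed to handle the small-$N$ regime in the proof of Theorem \ref{thm:cpfforget}.
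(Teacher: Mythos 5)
Your proposal is correct and is essentially the paper's own argument: the paper likewise defines $\mu_x = \hat\Phi_j\big(\tfrac{1}{N}\sum_i \delta_{x^i}\big)$ so that $\M_j^{x^*_{j-1}}(x,\uarg) = \mu_x^{\otimes N}$ and then proceeds exactly as in Lemma~\ref{lem:easybound}, via Lemma~\ref{lem:producttv}, Lemma~\ref{lem:fkcontract} (which applies to arbitrary pairs of probability measures, so the shared reference atom is immaterial), and sub-multiplicativity. Your write-up simply makes explicit the steps the paper leaves implicit.
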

\begin{proof}
Let $j \geq 1$ and $\mu_{x} := \hat \Phi_{j}( N^{-1}\sum_{i=1}^N \delta_{x^i})$
for arbitrary $x=x^{1:N} \in E^N$, so that $\mu_x^{\otimes N} = \M_j^{x^*_{j-1}}(x, \uarg)$. With this notation established, the rest of the proof proceeds as in Lemma \ref{lem:easybound}.
\end{proof}

For large $N$, we use a bound similar to Theorem \ref{thm:tvlemmanew}. Its proof follows the same pattern as Theorem \ref{thm:tvlemmanew}, but we also need time-uniform $L^p$ error bounds for the CPF, which we postpone to Section \ref{sec:cpfstability}.

\begin{lemma}
\label{lem:tvlemma_cpf}
There exist $c$ and $c'$, only depending on the constants in \textup{(A1)}, such that for all $n \geq 0$, $N \geq c'$ and $k \geq c \log(N)$,
\[
\beta_{\rm TV}(\M_{n,n+k}^{x^*}) \leq (1-\varepsilon^2)^{1/2},
\]
where $\varepsilon = (1-c'/N)^N$.
\end{lemma}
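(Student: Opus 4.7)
The plan is to mirror the proof of Theorem~\ref{thm:tvlemmanew} step by step, replacing each PF ingredient with its CPF counterpart. Without loss of generality take $n=0$, fix references $x^{*}$ and any two configurations $x_{0}^{1:N},\tilde{x}_{0}^{1:N}\in E^{N}$, and define the perturbed idealised predictors $\hat{\xi}_{t}=\hat{\Phi}_{t}\circ\cdots\circ\hat{\Phi}_{1}(N^{-1}\sum_{i}\delta_{x_{0}^{i}})$ and $\tilde{\hat{\xi}}_{t}$ analogously from $\tilde{x}_{0}^{1:N}$, noting that all $\hat{\Phi}_{j}$ use the same reference $x^{*}$. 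I would then run two independent copies of the CPF of length $k-2$ initialised at $\hat{\xi}_{1}$ and $\tilde{\hat{\xi}}_{1}$, both with reference $x^{*}$, yielding empirical measures $\hat{\xi}_{t}^{N},\tilde{\hat{\xi}}_{t}^{N}$ that approximate $\hat{\xi}_{t},\tilde{\hat{\xi}}_{t}$. Setting $\mu=\hat{\Psi}_{k-1}(\hat{\xi}_{k-1}^{N})$, $\nu=\hat{\Psi}_{k-1}(\tilde{\hat{\xi}}_{k-1}^{N})$ and $M=M_{k}$, the definitions \eqref{eq:perturbsel}--\eqref{eq:perturbselmut} give $\mu M=\hat{\Phi}_{k}(\hat{\xi}_{k-1}^{N})$, $\nu M=\hat{\Phi}_{k}(\tilde{\hat{\xi}}_{k-1}^{N})$, and $\E(\mu M)^{\otimes N}=\M_{0,k}^{x^{*}}(x_{0}^{1:N},\uarg)$, with the analogue for $\nu$.

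From here, Lemma~\ref{lem:dimfree}(b) combined with Lemma~\ref{lem:h2bound} would bound the target TV distance by $\sqrt{1-(1-\E H_{k}^{2})^{2N}}$, where $\E H_{k}^{2}\leq C\sup_{\osc(\phi)\leq 1}\E|\hat{\Psi}_{k-1}(\hat{\xi}_{k-1}^{N})(\phi)-\hat{\Psi}_{k-1}(\tilde{\hat{\xi}}_{k-1}^{N})(\phi)|^{2}$. Since $\hat{\Psi}_{k-1}$ is $\Psi_{k-1}$ applied to a convex combination in which only the non-atomic component differs between the two arguments, Lemma~\ref{lem:bgtransformlp} with $p=2$ reduces this further to $\sup_{\osc(\phi)\leq 1}\|\hat{\xi}_{k-1}^{N}(\phi)-\tilde{\hat{\xi}}_{k-1}^{N}(\phi)\|_{2}$ up to a constant factor (the prefactor $N/(N+1)\leq 1$ from the convex combination is harmless). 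Minkowski's inequality then splits this into two CPF particle-to-ideal errors plus the deterministic term $|\hat{\xi}_{k-1}(\phi)-\tilde{\hat{\xi}}_{k-1}(\phi)|$. Each particle-to-ideal error is $O(N^{-1/2})$ by the time-uniform CPF $L^{2}$ bound (Theorem~\ref{thm:cpfstability}) established in Section~\ref{sec:cpfstability}. For the deterministic term, I would observe that $\hat{\Phi}_{j}$ is the composition of the affine map $\mu\mapsto (N+1)^{-1}\delta_{x_{j-1}^{*}}+N(N+1)^{-1}\mu$ (Dobrushin coefficient $N/(N+1)$) with $\Phi_{j}$ (Dobrushin coefficient $\leq\beta$ by Lemma~\ref{lem:fkcontract}), so by submultiplicativity $|\hat{\xi}_{k-1}(\phi)-\tilde{\hat{\xi}}_{k-1}(\phi)|\leq (\beta N/(N+1))^{k-1}\leq\beta^{k-1}$. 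Choosing $c$ exactly as in Theorem~\ref{thm:tvlemmanew} forces $\beta^{k-1}\leq C_{3}/\sqrt{N}$, so all three terms are $O(N^{-1/2})$, giving $\E H_{k}^{2}\leq c'/N$; the conclusion then follows just as at the end of Theorem~\ref{thm:tvlemmanew}.

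The main obstacle is the first ingredient: a time-uniform $L^{2}$ error bound for the CPF. Unlike the PF, the reference path injects a persistent atomic component at each selection step, so Lemma~\ref{lem:pfstability} cannot be invoked directly; this is precisely what Theorem~\ref{thm:cpfstability} is designed to deliver, and its availability is what makes the present argument go through. Once this is in hand, the rest of the adaptation is mostly bookkeeping: verifying that the perturbed iterate $\hat{\Phi}_{0,k-1}$ still contracts geometrically (immediate from submultiplicativity, since each affine prefix has Dobrushin coefficient at most one) and tracking the harmless $N/(N+1)$ factors that appear in the reduction from $\hat{\Psi}_{k-1}$ back to $\hat{\xi}_{k-1}^{N}$.
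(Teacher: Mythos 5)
Your overall architecture is the right one and matches the paper's: tensorised Hellinger bound via Lemma \ref{lem:dimfree}\eqref{eq:dimbound-2} and Lemma \ref{lem:h2bound}, reduction to an $L^2$ difference of updated empirical measures, a three-term Minkowski split, and the identification of the time-uniform CPF $L^p$ bound (Theorem \ref{thm:cpfstability}) as the essential new ingredient. However, there is a genuine gap in your choice of reference measures. You compare the CPF empirical iterates to the \emph{perturbed} ideal iterates $\hat{\xi}_t = \hat{\Phi}_t\circ\cdots\circ\hat{\Phi}_1(\cdot)$, and this creates two problems. First, Theorem \ref{thm:cpfstability} bounds $\|\hat{\pi}_n^N(\phi)-\pi_n(\phi)\|_p$ where $\pi_n=\Psi_n(\eta_n)$ is the \emph{unperturbed} ideal filter; it does not give you the $O(N^{-1/2})$ control of $\|\hat{\xi}_{k-1}^N(\phi)-\hat{\xi}_{k-1}(\phi)\|_2$ against your perturbed iterates without an additional argument comparing $\hat{\xi}_{k-1}$ to $\xi_{k-1}$.

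Second, and more seriously, your bound $|\hat{\xi}_{k-1}(\phi)-\tilde{\hat{\xi}}_{k-1}(\phi)|\le(\beta N/(N+1))^{k-1}$ by ``submultiplicativity of Dobrushin coefficients'' is not justified. The map $\Phi_j$ is nonlinear (because of the normalisation in $\Psi_{j-1}$), so the notion of Dobrushin coefficient and its submultiplicativity, as defined in the paper for Markov kernels, do not apply to it. Lemma \ref{lem:fkcontract} only provides the \emph{uniform} bound $\sup_{\mu,\nu}\|\Phi_{n,n+k}(\mu)-\Phi_{n,n+k}(\nu)\|_{\mathrm{TV}}\le\beta^k$ for compositions of the unperturbed maps; it is not a one-step Lipschitz contraction $\|\Phi_j(\mu)-\Phi_j(\nu)\|_{\mathrm{TV}}\le\beta\|\mu-\nu\|_{\mathrm{TV}}$. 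Indeed, the best one-step Lipschitz constant obtainable from the paper's tools is $(1-\underline{M}/\bar{M})(\bar{G}/\underline{G})$ (via Lemma \ref{lem:bgtransformlp} and the minorisation of $M_j$), which can exceed one. Interleaving the affine perturbation maps $\mu\mapsto(N+1)^{-1}\delta_{x_{j-1}^*}+N(N+1)^{-1}\mu$ between the $\Phi_j$'s therefore breaks the telescoping that underlies Lemma \ref{lem:fkcontract}, and the geometric decay of the middle term is not established. The paper avoids both issues by never iterating the perturbed operators beyond the first step: it compares each empirical quantity $\hat{\Psi}_{k-1}(\xi_{k-1}^N)$ directly to the \emph{unperturbed} updated ideal $\zeta_{k-1}=\Psi_{k-1}(\Phi_{0,k-1}(N^{-1}\sum_j\delta_{x_0^j}))$ via Theorem \ref{thm:cpfstability}, and then applies Lemmas \ref{lem:bgtransformlp} and \ref{lem:fkcontract} to the purely unperturbed middle term $|\zeta_{k-1}(\phi)-\tilde{\zeta}_{k-1}(\phi)|$. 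Rerouting your decomposition through the unperturbed iterates in this way closes the gap.
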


\begin{proof}
Without loss of generality, assume again that $n = 0$. 
For arbitrary $k \geq 2$, $x_0^{1:N}$ and $\tilde x_0^{1:N}$ define
\[
\xi_t = \Phi_{0,t}\Big(\frac{1}{N} \sum_{j=1}^N \delta_{x_0^j} \Big), \quad \tilde \xi_t = \Phi_{0,t}\Big(\frac{1}{N} \sum_{j=1}^N \delta_{\tilde x_0^j} \Big), \quad t = 1, \ldots, k-1,
\]
and the perturbed distributions at time $t=1$,
\[
\xi_1^* = \hat \Phi_1 \Big( \frac{1}{N} \sum_{j=1}^N \delta_{x_0^j} \Big), \quad \tilde \xi_1^* = \hat \Phi_1 \Big( \frac{1}{N} \sum_{j=1}^N \delta_{\tilde x_0^j} \Big).
\]
Let $(X_{t}^{1:N})_{t=0}^{k-2}$ and $(\tilde X_{t}^{1:N})_{t=0}^{k-2}$ be the  particles generated by Algorithm \ref{alg:cpf} using the parameter vectors
\begin{align*}
&(\xi_1^* , M_{2:k-1}, G_{1:k-2}, x^*_{1:k-2}, N) \quad \text{and} \quad
(\tilde \xi_1^*, M_{2:k-1}, G_{1:k-2}, x^*_{1:k-2}, N).
\end{align*}
Let $(\xi_t^N$, $\tilde \xi_t^N)_{t=1}^{k-1} = (N^{-1} \sum_{j=1}^N \delta_{X_{t-1}^j}, N^{-1} \sum_{j=1}^N \delta_{\tilde X_{t-1}^j})_{t=1}^{k-1}$ be the corresponding empirical measures.
We note that $X_{t-1}^{1:N} \sim \M_{0,t}^{x^*} (x_0^{1:N}, \uarg)$ and $\tilde X_{t-1}^{1:N} \sim \M_{0,t}^{x^*} (\tilde x_0^{1:N}, \uarg)$, for $t=1,\ldots,k-1$.

Define $\mu = \hat \Psi_{k-1}(\xi_{k-1}^N)$, $\nu = \hat \Psi_{k-1}(\tilde \xi_{k-1}^N)$, and $M \!=\! M_k$. Then, $\mu M = \hat \Phi_{k}(\xi_{k-1}^N)$,  $\nu M = \hat \Phi_{k}(\tilde \xi_{k-1}^N)$, and
\begin{align*}
\E \left\{ \left(\mu M\right)^{\otimes N} \right\} = \M_{0,k}^{x^*} (x_0^{1:N}, \uarg), \quad 
\E \left\{ \left(\nu M\right)^{\otimes N} \right\} = \M_{0,k}^{x^*}(\tilde x_0^{1:N}, \uarg).
\end{align*}
Bound \eqref{eq:dimbound-2} of Lemma~\ref{lem:dimfree} together with Lemma \ref{lem:simpler-hellinger} gives
\begin{align*}
\big\| \M_{0,k}^{x^*}(x_0^{1:N}, \uarg) - \M_{0,k}^{x^*}(\tilde x_0^{1:N}, \uarg) \big\|_{\mathrm{TV}} &= \left\Vert \E \left\{ \left(\mu M \right)^{\otimes N}\right\} - \E \left\{ \left(\nu M \right)^{\otimes N}\right\} \right\Vert _{\mathrm{TV}} \\
&\leq \sqrt{1-\left(1-\E H^{2}(\mu M, \nu M) \right)^{2N}} \\
&= \sqrt{1-\left(1-\E H^{2}(\hat \Phi_{k}(\xi_{k-1}^N), \hat \Phi_{k}(\tilde \xi_{k-1}^N)) \right)^{2N}}.
\end{align*}
Denote the squared Hellinger distance above by
\[
H^2_{k} := H^2(\hat \Phi_{k}(\xi_{k-1}^N), \hat \Phi_{k}(\tilde \xi_{k-1}^N)).
\]
Applying Lemma~\ref{lem:h2bound} to $\hat \Phi_{k}(\xi_{k-1}^N) = \hat \Psi_{k-1}(\xi_{k-1}^N)M_{k}$ and $\hat \Phi_{k}(\tilde \xi_{k-1}^N) = \hat \Psi_{k-1}(\tilde \xi_{k-1}^N)M_{k}$  yields
\[
\E H^2_{k} \leq C \sup_{\osc(\phi) \leq 1} \E (|\hat \Psi_{k-1}(\xi_{k-1}^N)(\phi) - \hat \Psi_{k-1}(\tilde \xi_{k-1}^N)(\phi)|^2),
\]
where $C = (1/8) (\bar M/ \underline M)^2$. 
Denote $\zeta_n = \Psi_n (\xi_{n})$, $\tilde \zeta_n = \Psi_n (\tilde \xi_{n})$. Then
\begin{align*}
\| \hat \Psi_{k-1} (\xi_{k-1}^N)(\phi) - \hat \Psi_{k-1} (\tilde  \xi_{k-1}^N)(\phi) \|_2 \;\leq \;&\;\| \hat \Psi_{k-1} ( \xi_{k-1}^N)(\phi) - \zeta_{k-1} (\phi) \|_2  
 \,+ |\zeta_{k-1} (\phi) - \tilde \zeta_{k-1} (\phi) | \\
\,+ \;&\;\| \hat \Psi_{k-1}(\tilde{ \xi}_{k-1}^N)(\phi) - \tilde \zeta_{k-1}(\phi) \|_2,
\end{align*}
where Theorem \ref{thm:cpfstability} gives the bounds
\begin{align*}
\| \hat \Psi_{k-1} ( \xi_{k-1}^N)(\phi) - \zeta_{k-1} (\phi) \|_2 \leq \frac{C_2}{\sqrt N}, \qquad 
\| \hat \Psi_{k-1}(\tilde{ \xi}_{k-1}^N)(\phi) - \tilde \zeta_{k-1}(\phi) \|_2 \leq \frac{C_2}{\sqrt N}, 
\end{align*}
and by Lemmas \ref{lem:bgtransformlp} and \ref{lem:fkcontract},
\[
|\zeta_{k-1} (\phi) - \tilde \zeta_{k-1} (\phi) | \leq \sup_{\mu, \nu} \| \Psi_{k-1} (\Phi_{0,k-1}(\mu)) - \Psi_{k-1}( \Phi_{0,k-1}(\nu)) \|_{\mathrm{TV}} \leq C_3 \beta^{k-1}
\]
with $C_3 = \bar G / \underline G$ and $\beta = 1-(\underline M /\bar M)^2$. If $ k \geq c \log(N)$ with 
\[
c = \frac{1}{2 \log (\beta^{-1})} + \frac{1}{\log 2},
\]
then $k -1 \geq \log(N)/(2\log(\beta^{-1}))$ for all $N\geq 2$. Since $C_2 \geq C_3$ by \eqref{eq:cpfconstant}, we have $C_3 \beta^{k-1} \leq C_2/\sqrt{N}$ and
\[
\| \hat \Psi_{k-1} (\xi_{k-1}^N)(\phi) - \hat \Psi_{k-1} (\tilde  \xi_{k-1}^N)(\phi) \|_2 \leq 3\frac{C_2}{\sqrt N},
\] 
and we conclude that $\E H^2_{k} \leq c' N^{-1}$ with $c' = 9C C_2^2$.
\end{proof}

\begin{lemma}
\label{lem:tvuniform_cpf}
There exist $N_{\rm min} \geq 2$, $c>1$,  and $\alpha \in [0,1)$, only depending on the constants in \textup{(A1)}, such that for all $n \geq 0$, $N \geq N_{\rm min}$, $k \geq c \log(N)$, and $x^*$,
\[
\beta_\mathrm{TV}(\M_{n,n+k}^{ x^*} ) \leq \alpha.
\]
\end{lemma}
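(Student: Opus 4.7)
The plan is to mirror the proof of Lemma~\ref{lem:tvuniform} verbatim, substituting the CPF counterpart Lemma~\ref{lem:tvlemma_cpf} for Theorem~\ref{thm:tvlemmanew}. Specifically, Lemma~\ref{lem:tvlemma_cpf} furnishes constants $c,c'>1$ (depending only on the constants in (A1)) such that, for every reference path $x^*$, every $n\geq 0$, every $N\geq c'$, and every $k\geq c\log N$,
\[
\beta_{\rm TV}(\M_{n,n+k}^{x^*}) \;\leq\; \bigl(1-(1-c'/N)^{2N}\bigr)^{1/2}.
\]
Notice that the right-hand side depends on $N$, so it is not yet a uniform constant $\alpha<1$; this is the same situation as in the particle filter case.

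Next I would invoke Lemma~\ref{lem:derivative}, which tells us that the map $N\mapsto \bigl(1-(1-c'/N)^{2N}\bigr)^{1/2}$ is \emph{decreasing} on $(c',\infty)$. Consequently, if I set
\[
N_{\rm min} \;:=\; \lfloor c' \rfloor + 1, \qquad \alpha \;:=\; \bigl(1-(1-c'/N_{\rm min})^{2N_{\rm min}}\bigr)^{1/2},
\]
then $\alpha\in[0,1)$ and, for all $N\geq N_{\rm min}$ and all $k\geq c\log N$, the bound above simplifies to $\beta_{\rm TV}(\M_{n,n+k}^{x^*})\leq \alpha$, as required. All three constants $N_{\rm min}$, $c$, and $\alpha$ depend only on the constants in (A1), via $c,c'$.

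I do not anticipate any genuine obstacle here: the entire content of the lemma has already been absorbed into Lemma~\ref{lem:tvlemma_cpf} (which contains the real work, i.e.\ the Hellinger tensorisation argument combined with the time-uniform $L^p$ CPF bounds from Theorem~\ref{thm:cpfstability}). The one small thing to verify is that Lemma~\ref{lem:derivative} applies in this context identically to the PF case; since the expression $(1-(1-c'/N)^{2N})^{1/2}$ is structurally the same, this is immediate. The proof is therefore a one-paragraph deduction identical to that of Lemma~\ref{lem:tvuniform}.
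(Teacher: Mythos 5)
Your proposal is correct and coincides with the paper's own proof: the paper likewise deduces the result immediately from Lemma~\ref{lem:tvlemma_cpf} together with the monotonicity statement of Lemma~\ref{lem:derivative}, taking $N_{\rm min}=\lfloor c'\rfloor+1$ and $\alpha=\bigl(1-(1-c'/N_{\rm min})^{2N_{\rm min}}\bigr)^{1/2}$. No gaps.
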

\begin{proof}
Follows immediately from Lemmas \ref{lem:tvlemma_cpf} and \ref{lem:derivative} with $c$ and $c'$ as in Lemma \ref{lem:tvlemma_cpf}, $N_{\rm min} = \lfloor c' \rfloor +1$, and
\[
\alpha = \big(1-(1-c'/N_{\rm min})^{2 N_{\rm min}}\big)^{1/2}. \qedhere
\]
\end{proof}

\section{$L^p$ bounds for the conditional particle filter}
\label{sec:cpfstability}

In this section, we prove time uniform $L^p$ error estimates between the conditional particle filter and the true filter. The result is necessary for the proof of the CPF forgetting result (Theorem \ref{thm:cpfforget}) above, but can be of independent interest for understanding the behaviour of the CPF with long time horizons.

The main result of this  section is the following time-uniform error, which is analogous to Lemma \ref{lem:pfstability}, but stated for the CPF and filtering distributions:
\begin{theorem}
\label{thm:cpfstability}
Assume \textup{(A1)}. For every $p \geq 1$, there exists a constant $c(p)$ such that for all $\phi$ with $\osc(\phi) \leq 1$, all $n\geq 0$, $N\ge 1$ and all references $x^*$,
\begin{align*}
\big\|\hat \pi_n^N(\phi) - \pi_n(\phi)\big\|_p \leq  &\, \frac{c(p)}{\sqrt{N}}.
\end{align*}
\end{theorem}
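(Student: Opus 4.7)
The plan is to mimic the classical Del Moral proof of Lemma~\ref{lem:pfstability} while carefully tracking the perturbation introduced by the reference particle. First I would reduce the filtering bound to a predictive one by writing
\[
\hat\pi_n^N(\phi) - \pi_n(\phi) = \bigl[\hat\Psi_n(\hat\eta_n^N)(\phi) - \Psi_n(\hat\eta_n^N)(\phi)\bigr] + \bigl[\Psi_n(\hat\eta_n^N)(\phi) - \Psi_n(\eta_n)(\phi)\bigr].
\]
The first bracket is deterministic: the argument $\tfrac{1}{N+1}\delta_{x_n^*} + \tfrac{N}{N+1}\hat\eta_n^N$ of $\Psi_n$ differs from $\hat\eta_n^N$ by $\le 1/(N+1)$ in TV, and $\Psi_n$ is Lipschitz in TV with constant $\bar G/\underline G$ under (A1), giving an $O(1/N)$ bound. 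The second bracket is controlled by $(\bar G/\underline G)\sup_{\osc(\psi)\le 1}\|\hat\eta_n^N(\psi) - \eta_n(\psi)\|_p$ via Lemma~\ref{lem:bgtransformlp}. Thus it suffices to establish $\sup_{\osc(\psi)\le 1}\|\hat\eta_n^N(\psi) - \eta_n(\psi)\|_p \le c/\sqrt N$ uniformly in $n$ and $x^*$.

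The crucial structural observation is that, conditional on $\hat\eta_{k-1}^N$ and $x^*_{k-1}$, the new CPF particles $X_k^{1:N}$ are i.i.d.\ draws from $\hat\Phi_k(\hat\eta_{k-1}^N)$, so $\hat\eta_k^N$ is a conditionally i.i.d.\ empirical approximation of $\hat\Phi_k(\hat\eta_{k-1}^N)$. I would apply the classical telescoping
\[
\hat\eta_n^N(\psi) - \eta_n(\psi) = \bigl[\Phi_{0,n}(\hat\eta_0^N)(\psi) - \Phi_{0,n}(\eta_0)(\psi)\bigr] + \sum_{k=1}^n \bigl[\Phi_{k,n}(\hat\eta_k^N)(\psi) - \Phi_{k-1,n}(\hat\eta_{k-1}^N)(\psi)\bigr],
\]
splitting each summand into a local Monte Carlo error $\Phi_{k,n}(\hat\eta_k^N)(\psi) - \Phi_{k,n}(\hat\Phi_k(\hat\eta_{k-1}^N))(\psi)$ plus a reference perturbation $\Phi_{k,n}(\hat\Phi_k(\hat\eta_{k-1}^N))(\psi) - \Phi_{k,n}(\Phi_k(\hat\eta_{k-1}^N))(\psi)$.

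The local Monte Carlo error is handled by Del Moral's linearisation of $\Phi_{k,n}$ through unnormalised Feynman--Kac semigroups combined with the Marcinkiewicz--Zygmund inequality applied to the conditionally i.i.d.\ particles $X_k^{1:N}$, exactly as in the proof of Lemma~\ref{lem:pfstability}; exponential forgetting (Lemma~\ref{lem:fkcontract}) makes the centred test function oscillate by $O(\beta^{n-k})$, yielding an $L^p$ bound of order $\beta^{n-k}/\sqrt N$. The reference perturbation is deterministic: the one-step inequality $\|\hat\Phi_k(\hat\eta_{k-1}^N) - \Phi_k(\hat\eta_{k-1}^N)\|_\mathrm{TV} = O(1/N)$ combined with the Lipschitz-contraction form of Lemma~\ref{lem:fkcontract} (which provides a factor $\beta^{n-k}$ times the TV distance of the inputs, a standard strengthening under (A1)) produces an $O(\beta^{n-k}/N)$ bound. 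Summing via Minkowski's inequality turns both geometric series into $O(1/\sqrt N)$ and $O(1/N)$ contributions, and the initial term is controlled by the MZ argument together with $\beta^n$ forgetting. The main delicate point will be verifying that the reference particle does not disrupt Del Moral's linearisation step for the local Monte Carlo error; this proceeds unchanged because that argument relies only on the conditional i.i.d.\ structure of the new particles given the past --- which the CPF inherits from its selection step --- and the oscillation bound for the centred test function is a property of the Feynman--Kac model alone, independent of which measure's integral defines the centring.
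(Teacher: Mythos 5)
Your proposal is correct and follows essentially the same route as the paper's proof: the same initial split into the reference-update perturbation $\hat\Psi_n-\Psi_n$ (an $O(1/N)$ term) plus a Feynman--Kac telescoping whose local Monte Carlo terms are handled via the conditional i.i.d.\ structure of the CPF particles and a Marcinkiewicz--Zygmund-type bound (the paper's Lemma~\ref{lem:martingaleineq}), and whose per-step reference perturbations are $O(\beta^{n-k}/N)$ and summed geometrically using the ideal filter's forgetting. The only cosmetic differences are that the paper bounds the perturbation terms by an explicit algebraic identity (Lemma~\ref{lem:rknbound}) producing the factor $\gamma_{k-1,n}=O(1/N)$ times $\osc(P_{k-1,n}(\phi))\le(\bar G/\underline G)\beta^{n-k}$, whereas you invoke the TV-Lipschitz contraction of $\Phi_{k,n}$ directly (equivalent under (A1)), and the paper proves the filtering bound first and derives the predictive one afterwards (Theorem~\ref{thm:cpfstability_predictive}) rather than the reverse.
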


Theorem \ref{thm:cpfstability} complements the total variation error estimates of \cite{kuhlenschmidt-singh} between the conditional particle filter and the true filter. In the proof, more precisely in Eq. \eqref{eq:cpfconstant}, we give an explicit value for $c$.

The remainder of this section is dedicated to the proof of Theorem \ref{thm:cpfstability}. We introduce the mappings $Q_{k,n}$ from $E$ to measures on $E$, and the Markov kernels $P_{k,n}$
\begin{align*}
Q_{k,n} (\phi )(x_k) &:= \int \lambda({\rm d}x_{k+1}) \ldots \lambda({\rm d}x_n) \phi(x_n) \prod_{i=k}^{n-1} G_i (x_i)M_{i+1}(x_i,x_{i+1} )\\
P_{k,n}(\phi)(x_k) &:= \frac{Q_{k,n}(G_n \phi)(x_k) }{Q_{k,n}(G_n)(x_k)},
\end{align*}
and observe that with this notation,
\[
\Phi_{k,n}(\mu)(\phi) = \frac{\mu Q_{k,n}(\phi)}{\mu Q_{k,n}(1)}.
\]
The mappings $Q_{k,n}$ have the following stability property:
\begin{lemma}
\label{lem:Qbounds}
Assume \textup{(A1)}. Then there exists $c\in [1, \infty)$ such that for all $k \geq 0,n\geq k$ and $x, x'$
\[
0 < Q_{k,n}(G_n)(x) \leq c Q_{k,n}(G_n)(x').
\]
In particular, one may take $c = (\bar G/\underline G) (\bar M/\underline M)$.
\end{lemma}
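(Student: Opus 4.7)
The plan is to exploit a one-step recursion that isolates all $x$-dependence in the leading factor $G_k(x)$ and the transition density $M_{k+1}(x,\cdot)$. Specifically, for $n > k$, separating the factors $G_k(x_k)$ and $M_{k+1}(x_k,x_{k+1})$ in the definition of $Q_{k,n}$ gives
\[
Q_{k,n}(G_n)(x) \;=\; G_k(x) \int M_{k+1}(x,y)\, Q_{k+1,n}(G_n)(y)\, \lambda(\mathrm{d}y).
\]
The crucial point is that $Q_{k+1,n}(G_n)(y)$ is itself independent of $x$, so comparing $Q_{k,n}(G_n)(x)$ with $Q_{k,n}(G_n)(x')$ reduces to comparing the prefactor $G_k$ and the density $M_{k+1}(\cdot,y)$ pointwise.

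By (A1), the bounds $\underline M \leq M_{k+1}(\cdot,\cdot) \leq \bar M$ imply
\[
\int M_{k+1}(x,y)\, f(y)\, \lambda(\mathrm{d}y) \;\leq\; \frac{\bar M}{\underline M} \int M_{k+1}(x',y)\, f(y)\, \lambda(\mathrm{d}y)
\]
for every nonnegative measurable $f$ (in particular for $f = Q_{k+1,n}(G_n)$, which is nonnegative), while $G_k(x)/G_k(x') \leq \bar G/\underline G$. Multiplying these two elementary comparisons and inserting back into the recursion yields
\[
Q_{k,n}(G_n)(x) \;\leq\; \frac{\bar G}{\underline G}\cdot \frac{\bar M}{\underline M}\cdot Q_{k,n}(G_n)(x'),
\]
which is precisely the claimed inequality with $c = (\bar G/\underline G)(\bar M/\underline M)$. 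The base case $n = k$ is immediate from $Q_{k,k}(G_k)(x) = G_k(x) \in [\underline G,\bar G]$ together with $\bar M/\underline M \geq 1$.

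Strict positivity is easily verified from the probabilistic interpretation
\[
Q_{k,n}(G_n)(x) \;=\; G_k(x)\, \E_x\!\left[\,\prod_{i=k+1}^{n} G_i(X_i)\right] \;\geq\; \underline G^{\,n-k+1} \;>\; 0,
\]
where $(X_i)_{i\geq k}$ is the Markov chain started at $X_k = x$ with transitions $M_{k+1},M_{k+2},\ldots$. There is no substantive obstacle here: the result is a direct consequence of (A1) and the one-step recursion, and the strength of the bound (uniform in $n$) comes from the fact that the mixing of a single $M_{k+1}$-step already suffices to absorb the $x$-dependence.
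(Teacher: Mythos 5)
Your proof is correct and follows essentially the same route as the paper's: the one-step recursion $Q_{k,n}(G_n)(x) = G_k(x)\int M_{k+1}(x,y)\,Q_{k+1,n}(G_n)(y)\,\lambda(\mathrm{d}y)$ combined with the pointwise ratio bounds $M_{k+1}(x,y)/M_{k+1}(x',y)\le \bar M/\underline M$ and $G_k(x)/G_k(x')\le \bar G/\underline G$, yielding the same constant. The only differences are cosmetic: you spell out the trivial base case $n=k$ and the positivity lower bound $\underline G^{\,n-k+1}$, which the paper leaves implicit.
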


\begin{proof}
Since $G_k, \ldots, G_n$ and $M_{k+1}, \ldots, M_n$ are bounded from above and below, it holds that $0 < Q_{k,n}(G_n) < \infty$.
Moreover, $M_{k+1}(x,y)/M_{k+1}(x',y) \leq \bar M/\underline M$. Hence,
\begin{align*}
  Q_{k,n}(G_n)(x) \,&=\, G_k(x) \int \lambda({\rm d}x_{k+1}) Q_{k+1,n}(G_n)(x_{k+1}) M_{k+1}(x, x_{k+1}) \\
    &\leq\, \frac{\bar G}{\underline G}  \frac{\bar M}{\underline M} G_k(x') \int \lambda({\rm d}x_{k+1}) Q_{k+1,n}(G_n)(x_{k+1}) M_{k+1}(x', x_{k+1}) \,=\, \frac{\bar G}{\underline G} \frac{\bar M}{\underline M} Q_{k,n}(G_n)(x'),
\end{align*}
from which we may conclude.
\end{proof}

The following is a restatement of \citep[Lemma A.1]{del2015uniform}, but with explicit constants.
\begin{lemma}
\label{lem:martingaleineq}
 Let $G'$ and $h'$ be measurable functions $\mathcal{X} \to \mathbb{R}$ with $0 < G'(x) \leq c'G'(x') $, $\forall x, x'$ and some $c' \in [1, \infty)$. Let $X_1, \ldots, X_N$ be independent samples from $\mu$. If $\osc(h') < \infty$, then for any $p \geq 1$
\[
\sqrt{N} \bigg\| \frac{\sum_{i=1}^N G'(X_i)h'(X_i)}{\sum_{i=1}^N G'(X_i)} - \frac{\mu(G'h')}{\mu(G')}\bigg\|_p \,\leq\, d(p)^{1/p} c' \operatorname{osc}(h'),
\]
where $d(p)$ is as in Lemma \ref{lem:pfstability}.
\end{lemma}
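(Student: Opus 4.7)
The plan is to reduce the ratio expression to a centered i.i.d.\ empirical mean and then apply a classical $L^p$ estimate, tracking carefully how the hypothesis $\sup G' \leq c'\inf G'$ propagates into the final constant.

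First, set $\bar Y := \mu(G' h')/\mu(G')$ and rewrite the quantity of interest in centered form:
\[
\frac{\sum_{i=1}^N G'(X_i) h'(X_i)}{\sum_{i=1}^N G'(X_i)} - \bar Y \;=\; \frac{\sum_{i=1}^N Z_i}{\sum_{i=1}^N G'(X_i)}, \qquad Z_i := G'(X_i)\bigl(h'(X_i) - \bar Y\bigr).
\]
The random variables $Z_1, \ldots, Z_N$ are i.i.d.\ with $\E[Z_1] = \mu(G'h') - \bar Y \mu(G') = 0$.

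Next, I would extract deterministic control on the denominator and on $|Z_i|$. The hypothesis $0 < G'(x) \leq c' G'(x')$ implies $\sup G' \leq c' \inf G'$, hence $\sum_{i=1}^N G'(X_i) \geq N \inf G'$ almost surely. Since $\bar Y$ is a weighted average of the values of $h'$, it lies in $[\inf h', \sup h']$, so $|h'(X_i) - \bar Y| \leq \osc(h')$, giving $|Z_i| \leq \sup G' \cdot \osc(h')$ almost surely.

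Finally, I would invoke the standard $L^p$ inequality for a centered i.i.d.\ bounded sum --- the same Marcinkiewicz--Zygmund / Rosenthal-type estimate underlying Lemma \ref{lem:pfstability} in \cite{del2004feynman} --- namely $\left\| \sum_{i=1}^N Z_i \right\|_p \leq d(p)^{1/p} \sqrt{N}\, \|Z_1\|_\infty$, where $d(p)$ is the very constant appearing in Lemma \ref{lem:pfstability} (so in particular $d(2) = 1$ by the variance identity). Combined with the deterministic bounds, this yields
\[
\sqrt{N}\, \left\| \frac{\sum_{i=1}^N Z_i}{\sum_{i=1}^N G'(X_i)} \right\|_p \leq \frac{\sqrt{N}}{N \inf G'}\, d(p)^{1/p} \sqrt{N}\, \sup G' \cdot \osc(h') \;=\; d(p)^{1/p}\, \frac{\sup G'}{\inf G'}\, \osc(h') \leq d(p)^{1/p} c'\, \osc(h').
\]

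The main bookkeeping obstacle is ensuring that $\sup G'/\inf G'$ appears only linearly in the final constant rather than squared: exactly one factor of $\sup G'$ enters through $\|Z_1\|_\infty$, while one compensating factor of $1/\inf G'$ enters through the almost-sure lower bound on the denominator, so that precisely one $c'$ is produced. Beyond this, the argument is routine and inherits the constant $d(p)$ verbatim from the corresponding inequality used to prove Lemma \ref{lem:pfstability}.
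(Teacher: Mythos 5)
Your argument is correct. Note that the paper does not actually prove this lemma: it is presented only as a restatement of \cite[Lemma A.1]{del2015uniform} with explicit constants, so your write-up supplies a proof the paper omits rather than paralleling one. The route you take --- centring via $Z_i = G'(X_i)\bigl(h'(X_i)-\bar{Y}\bigr)$ with $\bar{Y}=\mu(G'h')/\mu(G')$, bounding the denominator below by $N\inf G'$ (which is positive, since $\sup G'\le c'\inf G'$ and $G'>0$ force $\inf G'>0$), and applying the Khintchine/Marcinkiewicz--Zygmund-type bound with constant $d(p)^{1/p}$ to the centred i.i.d.\ sum --- is the standard one and evidently what underlies the cited result. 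The only step that deserves a word of care is the black-box inequality $\bigl\|\sum_i Z_i\bigr\|_p\le d(p)^{1/p}\sqrt{N}\,\|Z_1\|_\infty$: depending on how the underlying inequality in \cite{del2004feynman} is normalised (sup norm versus oscillation of the centred integrand), you may need $\osc(Z_1)$ in place of $\|Z_1\|_\infty$; but the same one-sided argument used in Lemma \ref{lem:bgtransformlp} gives $\osc(Z_1)\le \sup G'\cdot\osc(h')$ as well, so the final constant $d(p)^{1/p}(\sup G'/\inf G')\osc(h')\le d(p)^{1/p}c'\osc(h')$ is unaffected either way. Your bookkeeping point --- that exactly one factor of $c'$, not $(c')^2$, survives --- is right and matches how the lemma is invoked in the proof of Theorem \ref{thm:lpbound_cpf}.
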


The first direct result towards proving Theorem \ref{thm:cpfstability} is Theorem \ref{thm:lpbound_cpf} stated next. To derive it, we follow a similar decomposition of errors used in \cite{kuhlenschmidt-singh} to establish total variation error. In particular, the sum of the errors on the right-hand side of Theorem \ref{thm:lpbound_cpf}  is due to the (one-step) Monte Carlo errors and the (one-step) errors due to the perturbation of the selection operators. These are given, respectively, by the terms $T_k$ in \eqref{eq:T_k} and  $R_k$ in \eqref{eq:R_k}.

\begin{theorem}
\label{thm:lpbound_cpf}
Assume \textup{(A1)}, and let $c$ be such that $0 < Q_{k,n}(G_n)(x) \leq c Q_{k,n}(G_n)(x')$ for all $k=0, \ldots, n$. Then, for all $N \geq 1$, $p \geq 1$, $x^*$ and $\phi: E \to \mathbb{R}$ with $\osc(\phi) <\infty$, 
\begin{align*}
\left\|\hat \Psi_n (\hat \eta_n^N)(\phi) - \Psi_n(\eta_n)(\phi) \right\|_p \leq  \, \|Z_n \|_p \osc(\phi) + \frac{c \,d(p)^{1/p} }{ \sqrt{N}}\sum_{k=0}^n q_{k,n}
 + \sum_{k=0}^{n-1} \| \gamma_{k,n}\|_p \, q_{k,n},
\end{align*}
where
\begin{gather*}
q_{k,n} = \osc(P_{k,n}(\phi)), \quad Z_n = \frac{G_n( x_n^*)}{N\hat\eta_n^N (G_n) + G_n( x_n^*)} , \quad 
\gamma_{k,n} = \frac{\delta_{x_k^*} Q_{k,n}(G_n)}{(N \hat \eta_{k}^N+\delta_{x_k^*}) Q_{k,n}(G_n)}.
\end{gather*}
\end{theorem}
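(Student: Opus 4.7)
My plan is to decompose the overall error into three sources: (i) a single perturbation at time $n$ caused by the reference particle appearing inside $\hat\Psi_n$, (ii) a telescoping sum of one-step Monte Carlo errors of the CPF transitions, and (iii) a sum of one-step perturbation errors from the reference at each earlier time. To separate contribution (i), I would write $\hat\Psi_n(\hat\eta_n^N)(\phi) = (G_n(x_n^*)\phi(x_n^*) + N\hat\eta_n^N(G_n\phi))/(G_n(x_n^*) + N\hat\eta_n^N(G_n))$ and verify by elementary algebra that
$$
\hat\Psi_n(\hat\eta_n^N)(\phi) \;=\; Z_n \phi(x_n^*) + (1-Z_n)\Psi_n(\hat\eta_n^N)(\phi).
$$
Since $Z_n \in [0,1]$ and $|\phi(x_n^*) - \pi_n(\phi)| \le \osc(\phi)$, the triangle inequality gives
$$
\|\hat\Psi_n(\hat\eta_n^N)(\phi) - \pi_n(\phi)\|_p \;\le\; \|Z_n\|_p \osc(\phi) + \|\Psi_n(\hat\eta_n^N)(\phi) - \pi_n(\phi)\|_p,
$$
and the rest of the proof treats the second term.

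Next, setting $\tilde\eta_k^N := (N+1)^{-1}(\delta_{x_k^*} + N\hat\eta_k^N)$, so that $\hat\Phi_k(\hat\eta_{k-1}^N) = \Phi_k(\tilde\eta_{k-1}^N)$ is the conditional law of a single CPF particle at time $k$ given $\hat\eta_{k-1}^N$, I would expand telescopically
$$
\Psi_n(\hat\eta_n^N)(\phi) - \pi_n(\phi) \;=\; T_0 + \sum_{k=1}^n (T_k + R_k),
$$
where $T_0 := \Psi_n(\Phi_{0,n}(\hat\eta_0^N))(\phi) - \Psi_n(\Phi_{0,n}(\eta_0))(\phi)$, and for $k \ge 1$,
\begin{align*}
T_k &:= \Psi_n(\Phi_{k,n}(\hat\eta_k^N))(\phi) - \Psi_n(\Phi_{k,n}(\hat\Phi_k(\hat\eta_{k-1}^N)))(\phi), \\
R_k &:= \Psi_n(\Phi_{k-1,n}(\tilde\eta_{k-1}^N))(\phi) - \Psi_n(\Phi_{k-1,n}(\hat\eta_{k-1}^N))(\phi);
\end{align*}
the sum collapses because $\Phi_{k,n}\circ\Phi_k(\tilde\eta_{k-1}^N) = \Phi_{k-1,n}(\tilde\eta_{k-1}^N)$. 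To bound each $T_k$ ($k\ge 1$), I would condition on $\hat\eta_{k-1}^N$, under which $\hat\eta_k^N$ is the empirical measure of $N$ i.i.d.\ draws from $\hat\Phi_k(\hat\eta_{k-1}^N)$. Then, using the identity $\Psi_n(\Phi_{k,n}(\mu))(\phi) = \mu(Q_{k,n}(G_n)P_{k,n}(\phi))/\mu(Q_{k,n}(G_n))$, Lemma~\ref{lem:Qbounds} (to supply $0 < Q_{k,n}(G_n)(x) \le c\,Q_{k,n}(G_n)(x')$), and a conditional application of Lemma~\ref{lem:martingaleineq} with $G' = Q_{k,n}(G_n)$, $h' = P_{k,n}(\phi)$, I obtain $\|T_k\|_p \le c\,d(p)^{1/p}\osc(P_{k,n}(\phi))/\sqrt{N}$; the $k=0$ case is identical with $\mu = \eta_0$.

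Finally, for the perturbation terms, writing $\tilde\eta_{k-1}^N = \alpha\delta_{x_{k-1}^*} + (1-\alpha)\hat\eta_{k-1}^N$ with $\alpha = (N+1)^{-1}$ and substituting into the $Q_{k-1,n}/P_{k-1,n}$ representation, I would cross-multiply the two ratios comprising $R_k$; the $\hat\eta_{k-1}^N$-only cross terms cancel and one is left with
$$
R_k \;=\; \gamma_{k-1,n}\big[P_{k-1,n}(\phi)(x_{k-1}^*) - \Psi_n(\Phi_{k-1,n}(\hat\eta_{k-1}^N))(\phi)\big].
$$
Because $\Psi_n(\Phi_{k-1,n}(\hat\eta_{k-1}^N))(\phi)$ is a weighted average of values of $P_{k-1,n}(\phi)$, the bracketed factor is bounded in modulus by $\osc(P_{k-1,n}(\phi))$, and since $\gamma_{k-1,n} \ge 0$ this gives $\|R_k\|_p \le \|\gamma_{k-1,n}\|_p\osc(P_{k-1,n}(\phi))$; reindexing $k-1 \mapsto k$ yields the stated $\sum_{k=0}^{n-1}\|\gamma_{k,n}\|_p\osc(P_{k,n}(\phi))$. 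The main technical obstacle I anticipate is this last algebraic reduction --- identifying $R_k$ with the clean product $\gamma_{k-1,n}\times(\text{centred quantity})$ --- though once the convex-combination structure of $\tilde\eta_{k-1}^N$ is made explicit and propagated through the $Q/P$ representation the cancellation is transparent; the other steps are routine reductions to Lemma~\ref{lem:martingaleineq} and the telescoping identity.
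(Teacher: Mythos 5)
Your proposal is correct and follows essentially the same route as the paper: the same initial split isolating the time-$n$ reference perturbation (your convex-combination identity is exactly the paper's Lemma \ref{lem:hatpsibound}), the same telescoping into $T_0$, $T_k$, $R_k$ (your $\Psi_n\circ\Phi_{k,n}$ formulation coincides with the paper's $Q_{k,n}$-ratio formulation), the same conditional application of Lemma \ref{lem:martingaleineq} for the $T_k$, and the same algebraic reduction of $R_k$ to $\gamma_{k-1,n}$ times a quantity bounded by $\osc(P_{k-1,n}(\phi))$ (the paper's Lemma \ref{lem:rknbound}). No substantive differences.
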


\begin{proof}
Recall that $\pi_n = \Psi_n(\eta_n)$. By Minkowski's inequality,
\begin{align*}
\label{eq:firstdecomposition}
\| \hat \Psi_n(\hat \eta_n^N)(\phi) - \pi_n(\phi) \|_p \leq \| \hat \Psi_n (\hat \eta_n^N )(\phi) - \Psi_n (\hat \eta_n^N)(\phi) \|_p + \| \Psi_n (\hat \eta_n^N)(\phi) - \pi_n(\phi) \|_p. 
\end{align*}
By Lemma \ref{lem:hatpsibound}, the first term on the right-hand side is bounded by $\|Z_n\|_p \osc(\phi)$. For the second term, we use the identities
\[
\Psi_n (\hat \eta_n^N)(\phi) = \frac{\hat \eta_n^N (G_n \phi)}{\hat \eta_n^N (G_n)} \quad \text{and} \quad \pi_n(\phi) = \frac{\eta_0 Q_{0,n}(G_n \phi)}{\eta_0 Q_{0,n}(G_n)},
\]
which together with Minkowski's inequality and a telescopic decomposition give the bound
\begin{align*}
\left\| \Psi_n (\hat \eta_n^N)(\phi) - \pi_n(\phi) \right\|_p \leq \left\|T_0 \right\|_p + \sum_{k=1}^n \left( \| T_k \|_p + \| R_k \|_p \right),
\end{align*}
where
\begin{gather}
T_0 := \frac{\hat \eta_0^N Q_{0,n}(G_n \phi)}{\hat \eta_0^N Q_{0,n}(G_n)} - \frac{\eta_0 Q_{0,n}(G_n \phi)}{\eta_0 Q_{0,n}(G_n)}, \quad 
T_k := \frac{\hat \eta_k^N Q_{k,n}(G_n \phi)}{\hat \eta_k^N Q_{k,n}(G_n)} - \frac{(\hat \Phi_k \hat \eta_{k-1}^N) Q_{k,n}(G_n \phi)}{(\hat \Phi_k \hat \eta_{k-1}^N) Q_{k,n}(G_n)}, \, k \geq 1, \label{eq:T_k} \\
R_k := \frac{(\hat \Phi_k \hat \eta_{k-1}^N) Q_{k,n}(G_n \phi)}{(\hat \Phi_k \hat \eta_{k-1}^N) Q_{k,n}(G_n)} - \frac{\hat \eta_{k-1}^N Q_{k-1,n}(G_n \phi)}{\hat \eta_{k-1}^N Q_{k-1,n}(G_n)}. \label{eq:R_k}
\end{gather}
By Lemma \ref{lem:rknbound}, $\| R_k \|_p \leq \left\| \gamma_{k-1,n} \right\|_p \osc(P_{k-1,n} (\phi))$, and it remains to bound $T_i$, $i=0,\ldots, n$. 

Denote $\mathcal{F}_k=\sigma\{ X_{0:k}^{1:N}, A_{0:k-1}^{1:N}\}$ and write $\E[|T_k|^p] = \E[\E(|T_k|^p\,|\, \mathcal{F}_{k-1})]$ for $k\geq 1$. Conditionally on $\mathcal{F}_{k-1}$, the random variables $X_k^1, \ldots,  X_k^N$ are i.i.d. samples from $\hat \Phi_k \hat \eta_{k-1}^N$. 
We observe that
\[
T_k = \frac{\mu^N(G'h')}{\mu^N(G')} - \frac{\mu(G'h')}{\mu(G')},
\]
where 
\begin{gather*}
G' = Q_{k,n}(G_n), \quad h' = P_{k,n}(\phi), \quad \mu^N = \hat\eta_k^N, \quad  \mu = \hat \Phi_k \hat \eta_{k-1}^N,
\end{gather*}
and consequently $G' h' = Q_{k,n}(G_n \phi)$.
Then, by Lemma \ref{lem:martingaleineq},
\[
\E(|T_k|^p\,|\, \mathcal{F}_{k-1}) \leq \big( N^{-1/2} d(p)^{1/p} c \operatorname{osc}(h') \big)^p,
\]
and so $\E[|T_k|^p]^{1/p} \leq N^{-1/2} d(p)^{1/p} c \operatorname{osc}(h')$. The case $T_0$ follows by the same argument with $G' = Q_{0,n}(G_n)$, $h' = P_{0,n}(\phi)$, $\mu^N = \hat \eta_0^N$ and $\mu = \eta_0$.
\end{proof}

\begin{lemma}
\label{lem:hatpsibound}
Let $\phi: E \to \mathbb{R}$ be a function with $\osc(\phi) < \infty$. Then
\[
\| \hat \Psi_k(\hat \eta_k^N) (\phi) - \Psi_k(\hat \eta_k^N) (\phi)\|_p \leq \| Z_k \|_p \osc(\phi),
\]
where
\[
Z_k = \frac{G_k(x_k^*)}{ N \hat \eta_k^N(G_k) + G_k( x_k^*)}.
\]
\end{lemma}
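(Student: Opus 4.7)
The plan is to derive a clean pointwise identity for the difference $\hat\Psi_k(\hat\eta_k^N)(\phi) - \Psi_k(\hat\eta_k^N)(\phi)$ and then read off the $L^p$ bound.

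First I would expand both operators using the definition $\hat\Psi_k(\mu) = \Psi_k\bigl(\tfrac{1}{N+1}\delta_{x_k^*} + \tfrac{N}{N+1}\mu\bigr)$ and clear the $\tfrac{1}{N+1}$ factors from numerator and denominator. Writing $a = G_k(x_k^*)$, $b = N\hat\eta_k^N(G_k)$, I would obtain
\[
\hat\Psi_k(\hat\eta_k^N)(\phi) = \frac{a\,\phi(x_k^*) + N\hat\eta_k^N(G_k\phi)}{a + b}, \qquad \Psi_k(\hat\eta_k^N)(\phi) = \frac{N\hat\eta_k^N(G_k\phi)}{b}.
\]
Subtracting and putting over a common denominator, the $N\hat\eta_k^N(G_k\phi)$ term can be written as $b\,\Psi_k(\hat\eta_k^N)(\phi)$, and a direct computation gives the factorisation
\[
\hat\Psi_k(\hat\eta_k^N)(\phi) - \Psi_k(\hat\eta_k^N)(\phi) = \frac{a}{a+b}\bigl(\phi(x_k^*) - \Psi_k(\hat\eta_k^N)(\phi)\bigr) = Z_k\bigl(\phi(x_k^*) - \Psi_k(\hat\eta_k^N)(\phi)\bigr).
\]

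Next I would bound the bracketed factor deterministically. Since $\Psi_k(\hat\eta_k^N)(\phi)$ is a convex combination of values $\phi(X_k^i)$, it lies in $[\inf \phi, \sup \phi]$; the value $\phi(x_k^*)$ lies in the same interval, so
\[
\bigl|\phi(x_k^*) - \Psi_k(\hat\eta_k^N)(\phi)\bigr| \leq \osc(\phi).
\]
Because $Z_k \geq 0$, combining the display above with this pointwise bound yields
\[
\bigl|\hat\Psi_k(\hat\eta_k^N)(\phi) - \Psi_k(\hat\eta_k^N)(\phi)\bigr| \leq Z_k\,\osc(\phi),
\]
and taking the $L^p$ norm of both sides gives the claim.

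There is essentially no obstacle here beyond spotting the algebraic factorisation that isolates $Z_k$ as the weight $a/(a+b)$ of the reference in the perturbed normalisation; once this identity is written down, both the $\osc(\phi)$ bound and the passage to $L^p$ are immediate.
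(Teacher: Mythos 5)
Your proof is correct and follows essentially the same route as the paper: both derive the pointwise factorisation $\hat\Psi_k(\hat\eta_k^N)(\phi) - \Psi_k(\hat\eta_k^N)(\phi) = Z_k\bigl(\phi(x_k^*) - \Psi_k(\hat\eta_k^N)(\phi)\bigr)$ by expanding the perturbed selection operator and cancelling the $\tfrac{1}{N+1}$ factors, then bound the bracket by $\osc(\phi)$ and take $L^p$ norms. The only cosmetic difference is that the paper states the identity for a general probability measure $\mu$ before substituting $\mu = \hat\eta_k^N$.
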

\begin{proof}
For any probability measure $\mu$ on $E$ it holds that
\begin{align*}
 \hat \Psi_k(\mu)(\phi) \,&:=\,  \frac{N\mu (G_k \phi) + G_k(x_k^*)\phi(x_k^*)}{N \mu(G_k) + G_k(x_k^*)} \,=\, \frac{\mu(G_k \phi)}{\mu(G_k)}\Big(\frac{N \mu(G_k)}{N \mu(G_k) + G_k(x_k^*)} \Big) \,+\, \frac{G_k(x_k^*)\phi(x_k^*)}{ N \mu(G_k) + G_k(x_k^*)} \\
&=\, \Psi_k(\mu)(\phi) \Big(1- \frac{G_k(x_k^*)}{N \mu(G_k) + G_k(x_k^*)} \Big) + \frac{G_k(x_k^*)}{ N \mu(G_k) + G_k(x_k^*)} \phi(x_k^*),
\end{align*}
hence
\begin{align*}
\hat \Psi_k(\mu)(\phi) - \Psi_k(\mu)(\phi) &=  (\phi(x_k^*) - \Psi_k(\mu)(\phi)) \frac{G_k(x_k^*)}{ N \mu(G_k) + G_k(x_k^*)},
\end{align*}
where $|\phi(x_k^*) - \Psi_k(\mu)(\phi)| \leq \osc(\phi)$. Setting $\mu = \hat \eta_k^N$ and taking $\|\cdot\|_p$ now gives the claim.
\end{proof}

We postpone the proof of the following technical lemma to Appendix \ref{app:technical}.
\begin{lemma}
\label{lem:rknbound}
Assume \textup{(A1)}. For $R_k$ defined in \eqref{eq:R_k}, it holds that
$\|R_k\|_p \!\leq\! \|\gamma_{k-1,n}\|_p \operatorname{osc}\left( P_{k-1,n}(\phi) \right)$.
\end{lemma}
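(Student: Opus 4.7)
The plan is purely algebraic: we will rewrite $R_k$ as a product of $\gamma_{k-1,n}$ and a bounded oscillation term. First, I would reduce the composite operator on $\hat\eta_{k-1}^N$ to the simpler kernel $Q_{k-1,n}$. By definition of $\hat\Phi_k$, we can write $\hat\Phi_k(\hat\eta_{k-1}^N) = \Phi_k(\tilde\mu)$, where
\[
\tilde\mu = \frac{1}{N+1}\delta_{x_{k-1}^*} + \frac{N}{N+1}\hat\eta_{k-1}^N.
\]
Next, using $\Phi_k(\mu) = \Psi_{k-1}(\mu)M_k$ together with the recursion $Q_{k-1,n}(f)(x) = G_{k-1}(x)\,(M_k Q_{k,n}(f))(x)$, one obtains the key identity
\[
(\Phi_k\mu)\,Q_{k,n}(f) \;=\; \frac{\mu\, Q_{k-1,n}(f)}{\mu(G_{k-1})}.
\]
Applying this with $\mu = \tilde\mu$ and $f = G_n\phi$ and $f = G_n$ separately, the $\tilde\mu(G_{k-1})$ factors cancel from numerator and denominator, yielding
\[
\frac{(\hat\Phi_k\hat\eta_{k-1}^N)Q_{k,n}(G_n\phi)}{(\hat\Phi_k\hat\eta_{k-1}^N)Q_{k,n}(G_n)} \;=\; \frac{\delta_{x_{k-1}^*}Q_{k-1,n}(G_n\phi) + N\hat\eta_{k-1}^N Q_{k-1,n}(G_n\phi)}{\delta_{x_{k-1}^*}Q_{k-1,n}(G_n) + N\hat\eta_{k-1}^N Q_{k-1,n}(G_n)}.
\]

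With the shorthand $a = \delta_{x_{k-1}^*}Q_{k-1,n}(G_n\phi)$, $A = \delta_{x_{k-1}^*}Q_{k-1,n}(G_n)$, $b = \hat\eta_{k-1}^N Q_{k-1,n}(G_n\phi)$, $B = \hat\eta_{k-1}^N Q_{k-1,n}(G_n)$, a direct computation gives
\[
R_k \;=\; \frac{a+Nb}{A+NB} - \frac{b}{B} \;=\; \frac{A}{A+NB}\left(\frac{a}{A} - \frac{b}{B}\right).
\]
The prefactor is exactly $\gamma_{k-1,n}$ by definition. Moreover, $a/A = P_{k-1,n}(\phi)(x_{k-1}^*)$, again by definition, and
\[
\frac{b}{B} \;=\; \frac{\int \hat\eta_{k-1}^N(dx)\, Q_{k-1,n}(G_n)(x)\, P_{k-1,n}(\phi)(x)}{\int \hat\eta_{k-1}^N(dx)\, Q_{k-1,n}(G_n)(x)}
\]
is a weighted average of values of $P_{k-1,n}(\phi)$ with positive weights, hence lies between $\inf P_{k-1,n}(\phi)$ and $\sup P_{k-1,n}(\phi)$. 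Therefore $|a/A - b/B| \le \osc(P_{k-1,n}(\phi))$ pointwise, so
\[
|R_k| \;\le\; \gamma_{k-1,n}\,\osc(P_{k-1,n}(\phi)),
\]
and taking $L^p$ norms completes the proof.

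The main obstacle is just the bookkeeping in the first paragraph: one must carefully exploit the semigroup structure of $Q_{k,n}$ together with the definition of $\hat\Phi_k$ so that the factor of $1/(N+1)$ becomes an $A/(A+NB)$ prefactor matching $\gamma_{k-1,n}$; once that algebraic identification is made, the oscillation bound is immediate.
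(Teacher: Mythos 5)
Your proof is correct and follows essentially the same route as the paper: both exploit the semigroup identity to rewrite $\hat\Phi_k(\hat\eta_{k-1}^N)Q_{k,n}(\cdot)$ in terms of $Q_{k-1,n}$ applied to the mixture $N^{-1}\delta_{x_{k-1}^*}+\hat\eta_{k-1}^N$, factor out $\gamma_{k-1,n}$, and bound the remaining difference of two $Q_{k-1,n}(G_n)$-weighted averages of $P_{k-1,n}(\phi)$ by its oscillation. Your $a,A,b,B$ bookkeeping is just a tidier presentation of the paper's ``multiply and divide by well-chosen terms'' step.
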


\begin{proof}[Proof of Theorem \ref{thm:cpfstability}]
We estimate each term in Theorem \ref{thm:lpbound_cpf} separately:
\begin{align*}
\| Z_n \|_p \!=\! \bigg\| \frac{G_n(x_n^*)}{N\hat\eta_n^N (G_n) \!+\! G_n(x_n^*)} \bigg\|_p \leq \frac{1}{N\!+\!1} \frac{\bar G}{\underline G}, \quad \!
\|\gamma_{k,n}\|_p \!=\! \bigg\| \frac{\delta_{x_k^*} Q_{k,n}(G_n)}{(N\hat \eta_{k}^N+\delta_{x_k^*}) Q_{k,n}(G_n)} \bigg\|_p \leq \frac{1}{N\!+\!1} \frac{ \bar G}{\underline G} \frac{ \bar M}{\underline M},
\end{align*}
where the last inequality follows from the fact that, by Lemma \ref{lem:Qbounds}, 
\[
\delta_{x_k^*} Q_{k,n}(G_n) \;\leq\; \frac{ \bar G}{\underline G} \frac{ \bar M}{\underline M} \hat \eta_{k}^N Q_{k,n}(G_n).
\] 
Next, we bound $\osc(P_{k,n}(\phi))$. Since $P_{k,n}(\phi)(x) = \Psi_n(\Phi_{k,n}(\delta_x))(\phi)$, it follows that
\begin{align*}
\osc(P_{k,n}(\phi)) &\;\leq\; \sup_{x,y} \sup_{\osc(\phi) \leq 1} | \Psi_n(\Phi_{k,n}(\delta_x))(\phi) - \Psi_n(\Phi_{k,n}(\delta_y))(\phi) | \\
&\;\leq\; \sup_{x,y} \| \Psi_n(\Phi_{k,n}(\delta_x)) - \Psi_n(\Phi_{k,n}(\delta_y)) \|_{\mathrm{TV}}
\end{align*}
where we have used $\| \mu - \nu \|_\mathrm{TV} = \sup_{\osc(\phi) \leq 1} |\mu(\phi) - \nu(\phi)|$.  Lemmas \ref{lem:bgtransformlp} and \ref{lem:fkcontract} then give the bound
\begin{align*}
\sup_{x,y} \| \Psi_n(\Phi_{k,n}(\delta_x)) - \Psi_n(\Phi_{k,n}(\delta_y)) \|_{\mathrm{TV}} &\;\leq\; \frac{\bar G}{\underline G} \sup_{x,y} \| \Phi_{k,n}(\delta_x) - \Phi_{k,n}(\delta_y) \|_{\mathrm{TV}} \;\leq\; \frac{\bar G}{\underline G} \beta^{n-k},
\end{align*}
hence $\sum_{k=0}^n \osc(P_{k,n}(\phi)) \leq (\bar G/ \underline G) (1-\beta)^{-1}$ for all $n$, with $\beta = 1-(\underline M / \bar M)^2$. It follows from Theorem \ref{thm:lpbound_cpf} and the previous bounds that
\begin{align*}
\left\|\hat \Psi_n (\hat \eta_n^N)(\phi) - \Psi_n(\eta_n)(\phi) \right\|_p \;\leq\;  &\, \frac{1}{N+1}\frac{\bar G }{\underline G} + \frac{\,d(p)^{1/p} }{ \sqrt{N}} \left( \frac{\bar G}{\underline G} \right)^2 \left( \frac{\bar M}{\underline M} \right)^3  + \frac{1}{N+1} \left( \frac{\bar G}{\underline G} \right)^2 \left( \frac{\bar M}{\underline M} \right)^3.
\end{align*}
Since $1/(N+1) \leq 1/(2\sqrt{N})$ for $N\geq 1$, the claim follows with the constant
\begin{equation}
\label{eq:cpfconstant}
c = \frac{\bar G}{ 2 \underline G} + \Big( \frac{\bar G}{\underline G} \frac{\bar M}{\underline M} \Big)^2 \frac{\bar M}{\underline M} \Big( d(p)^{1/p} + \frac{1}{2} \Big), 
\end{equation}
where $d(p)$ is as in Lemma \ref{lem:pfstability}.
\qedhere
\end{proof}

From Theorem \ref{thm:lpbound_cpf} we almost immediately obtain a CPF analogue of Lemma \ref{lem:pfstability}:
\begin{theorem} 
\label{thm:cpfstability_predictive}
Assume \textup{(A1)} and $\osc(\phi) \leq 1$. For all $N\geq 1$, $n\geq 0$, and $x^*$,
\[
\|\hat \eta_{n}^N(\phi) - \eta_{n}(\phi) \|_p \leq \frac{c(p)}{\sqrt N},
\]
where $c(p)$ only depends on $p$ and the constants in \textup{(A1)}.
\end{theorem}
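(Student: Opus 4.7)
The plan is to reduce the predictive $L^p$ error directly to the filtering $L^p$ error of Theorem~\ref{thm:cpfstability}, plus a one-step Monte Carlo sampling error. Two structural observations drive the argument: first, the ideal predictor satisfies $\eta_n = \pi_{n-1} M_n$ for $n \geq 1$ (since $\eta_n = \Phi_n(\eta_{n-1}) = \Psi_{n-1}(\eta_{n-1}) M_n$); second, conditionally on $X_{n-1}^{1:N}$, the CPF particles $X_n^{1:N}$ are iid draws from $\hat\Phi_n(\hat\eta_{n-1}^N) = \hat\Psi_{n-1}(\hat\eta_{n-1}^N) M_n = \hat\pi_{n-1}^N M_n$.

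Concretely, for $n \geq 1$ I would apply Minkowski's inequality to the decomposition
$$\hat\eta_n^N(\phi) - \eta_n(\phi) = \big[\hat\eta_n^N(\phi) - \hat\pi_{n-1}^N(M_n \phi)\big] + \big[\hat\pi_{n-1}^N(M_n \phi) - \pi_{n-1}(M_n \phi)\big].$$
For the second bracket, $\osc(M_n \phi) \leq \osc(\phi) \leq 1$ (as $M_n\phi(x)$ is a convex combination of values of $\phi$), so Theorem~\ref{thm:cpfstability} applied at time $n-1$ with test function $M_n\phi$ yields an $O(1/\sqrt N)$ bound, uniform in $n$ and in the reference $x^*$.

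For the first bracket, I would condition on $\mathcal{F}_{n-1} = \sigma(X_{0:n-1}^{1:N}, A_{0:n-2}^{1:N})$. Given $\mathcal{F}_{n-1}$, the summands $\phi(X_n^i)$ are iid with common mean $\hat\pi_{n-1}^N(M_n \phi)$ and oscillation at most $1$. Applying Lemma~\ref{lem:martingaleineq} with trivial weights ($G' \equiv 1$, $c' = 1$) to this conditional distribution yields a conditional $L^p$ bound of $d(p)^{1/p}/\sqrt N$; taking outer expectations and $p$-th roots preserves the rate. The initial case $n = 0$ is handled by the same lemma applied directly to the iid initialisation $X_0^i \sim \eta_0$.

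No step presents a genuine obstacle; the argument is essentially bookkeeping, and the only input beyond Theorem~\ref{thm:cpfstability} is a single-step conditional Marcinkiewicz--Zygmund-type inequality, already available as Lemma~\ref{lem:martingaleineq}. The final constant $c$ depends only on $p$ and on the constants in \textup{(A1)}, inherited from Theorem~\ref{thm:cpfstability} together with the combinatorial factor $d(p)$.
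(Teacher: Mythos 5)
Your proof is correct, but it takes a genuinely different route from the paper's. The paper proves this result by re-running the telescoping decomposition of Theorem~\ref{thm:lpbound_cpf} with the final potential replaced by the constant function $G_n \equiv 1$, so that $\Psi_n$ becomes the identity and the filtering error collapses to the predictive error; it then re-bounds $\|\gamma_{k,n}\|_p$ and $\osc(P_{k,n}(\phi))$ exactly as in the proof of Theorem~\ref{thm:cpfstability}. You instead exploit the identities $\eta_n = \pi_{n-1}M_n$ and $\mathrm{Law}(X_n^i \mid \mathcal{F}_{n-1}) = \hat\pi_{n-1}^N M_n$ to write the predictive error as the filtering error at time $n-1$ evaluated at $M_n\phi$ (for which $\osc(M_n\phi)\le\osc(\phi)$, so Theorem~\ref{thm:cpfstability} applies directly) plus a single conditionally i.i.d.\ sampling error controlled by Lemma~\ref{lem:martingaleineq} with $G'\equiv 1$. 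Your reduction is arguably cleaner: it avoids having to check that the constants of Theorem~\ref{thm:lpbound_cpf} (in particular the bound of Lemma~\ref{lem:Qbounds} and the oscillation bounds on $P_{k,n}(\phi)$) survive the substitution $G_n=1$, and it makes the structure \emph{predictive $=$ filtering $+$ one mutation step} explicit. Both arguments yield the same $O(N^{-1/2})$ rate with a constant depending only on $p$ and the constants in (A1).
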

\begin{proof}
By the proof of Theorem \ref{thm:lpbound_cpf},
\[
 \| \Psi_n \hat \eta_n^N(\phi) - \Psi_n \eta_n(\phi) \|_p \leq \frac{c' d(p)^{1/p} }{ \sqrt{N}}\sum_{k=0}^n  \osc(P_{k,n} (\phi)) + \sum_{k=0}^{n-1} \| \gamma_{k,n}\|_p \osc(P_{k,n}(\phi))
\]
for some $c'$. We set $G_n = 1$, so that $\| \hat \eta_n^N(\phi) - \eta_n(\phi)\|_p = \| \Psi_n \hat \eta_n^N(\phi) - \Psi_n \eta_n(\phi) \|_p$.
The claim now follows by bounding $\| \gamma_{k,n}\|_p$ and $\osc(P_{k,n} (\phi))$ as in Theorem \ref{thm:cpfstability}.
\end{proof}

\section{Connections to propagation of chaos}\label{sec:pochaos}

Using essentially the same proof technique as in our main result (Theorem \ref{thm:pfforget}), we can derive new
`propagation of chaos' type bounds for particle filters. The aim of such results is to quantify how close the joint law of $q$ particles is to the law of independent draws from the corresponding ideal filter. As opposed to the  forgetting results, here the initial states are not fixed and arbitrary, but generated from the true initial distribution $\eta_0$. 

\begin{theorem}
  \label{thm:uniform-poc}
  Let $X_k^i$ be the random variables generated in the particle filter (Algorithm \ref{alg:pf}), and let $\eta_k$ denote the corresponding ideal filter. There exists a constant $C\in(0,\infty)$ which depends only on the constants in \textup{(A1)}, such that for all $k\ge 1$, $N > C$ and $1 \le q\le N$ the following holds:
  \begin{equation}
   \| \mathrm{Law}(X_k^{1:q}) -  \eta_k^{\otimes q} \|_\mathrm{TV}
   \le \sqrt{1 - \Big(1 - \frac{C}{N}\Big)^{2q} } \le  \sqrt{2C\frac{q}{N}}.
   \label{eq:poc-bound-hellinger}
  \end{equation}
\end{theorem}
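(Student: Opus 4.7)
The plan is to exploit the conditional structure of the particle filter: given $X_{k-1}^{1:N}$, the next-step particles $X_k^{1:N}$ are conditionally i.i.d.\ from $\Phi_k(\eta_{k-1}^N)$, because the ancestor indices $A_{k-1}^{1:N}$ are drawn independently from $\mathrm{Categ}(G_{k-1}(X_{k-1}^{1:N}))$ and the mutations are independent. Marginalising the first $q$ particles gives the key identities
\[
\mathrm{Law}(X_k^{1:q}) = \E\bigl[\Phi_k(\eta_{k-1}^N)^{\otimes q}\bigr], \qquad \eta_k^{\otimes q} = \Phi_k(\eta_{k-1})^{\otimes q},
\]
which places us in the setting of Lemma~\ref{lem:simpler-hellinger} with the random measure $\mu = \Phi_k(\eta_{k-1}^N)^{\otimes q}$ and the deterministic measure $\nu = \Phi_k(\eta_{k-1})^{\otimes q}$.

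Applying Lemma~\ref{lem:simpler-hellinger}, the target TV distance is controlled by $\sqrt{1-(1-\E H^2(\mu,\nu))^2}$. I would then tensorise using $H^2(\mu_1^{\otimes q},\mu_2^{\otimes q}) = 1-(1-H^2(\mu_1,\mu_2))^q$ and push the expectation through the convex map $x \mapsto (1-x)^q$ by Jensen:
\[
\E\bigl[H^2(\Phi_k(\eta_{k-1}^N)^{\otimes q},\Phi_k(\eta_{k-1})^{\otimes q})\bigr] = 1 - \E\bigl[(1-H^2_k)^q\bigr] \le 1 - \bigl(1-\E H^2_k\bigr)^q,
\]
where $H^2_k := H^2(\Phi_k(\eta_{k-1}^N),\Phi_k(\eta_{k-1}))$. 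The direction of Jensen is the fortunate one because $q \ge 1$ makes $(1-x)^q$ convex on $[0,1]$. Substituting this back into the Lemma~\ref{lem:simpler-hellinger} bound yields exactly $\sqrt{1-(1-\E H^2_k)^{2q}}$, so the first inequality of the theorem follows as soon as $\E H^2_k \le C/N$ for a constant $C$ depending only on (A1).

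The quantitative input $\E H^2_k \le C/N$ is obtained precisely as in the proof of Theorem~\ref{thm:tvlemmanew}, but with one of the empirical filters replaced by the ideal filter, which actually simplifies matters since no forgetting step is required. Concretely, Lemma~\ref{lem:h2bound} applied to $\Psi_{k-1}(\eta_{k-1}^N)M_k$ and $\Psi_{k-1}(\eta_{k-1})M_k$ converts the Hellinger distance into a supremum of squared perturbations of $\Psi_{k-1}$; Lemma~\ref{lem:bgtransformlp} with $p=2$ transfers this to a squared $L^2$ perturbation of the empirical predictor $\eta_{k-1}^N$; and Lemma~\ref{lem:pfstability} supplies the uniform-in-time bound $\|\eta_{k-1}^N(\phi)-\eta_{k-1}(\phi)\|_2 \le c/\sqrt{N}$. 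The resulting constant $C$ depends only on $\underline M,\bar M,\underline G,\bar G$.

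Finally, the second inequality $\sqrt{1-(1-C/N)^{2q}} \le \sqrt{2Cq/N}$ is the Bernoulli estimate $(1-C/N)^{2q} \ge 1 - 2qC/N$, which is legitimate under the hypothesis $N > C$. I do not anticipate a serious obstacle; the only subtle step is checking the direction of Jensen's inequality and ensuring that tensorisation and the expectation commute in the favourable direction, but the convexity of $(1-x)^q$ delivers exactly this.
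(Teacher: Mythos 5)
Your proposal is correct and follows essentially the same route as the paper: the same conditional i.i.d.\ identity $\mathrm{Law}(X_k^{1:q}) = \E[(\Psi_{k-1}(\eta_{k-1}^N)M_k)^{\otimes q}]$, the same Hellinger tensorisation with Jensen (the paper's Lemma~\ref{lem:dimfree}(b) adapted to $q$ copies), and the same chain of Lemmas~\ref{lem:h2bound}, \ref{lem:bgtransformlp} and \ref{lem:pfstability} to get $\E H^2_k \le C/N$, followed by Bernoulli's inequality. No gaps.
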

\begin{proof}
We may write, similar to the proof of Theorem \ref{thm:tvlemmanew}, 
\begin{align*}
  \| \mathrm{Law}(X_k^{1:q}) -  \eta_k^{\otimes q} \|_\mathrm{TV}^2
& \;=\; \| \E \big\{( \pi_{k-1}^N M_k )^{\otimes q}\} -  \eta_k^{\otimes q} \|_\mathrm{TV}^2  \;\le\; 1 - \big(1 - \E H^2 (\pi_{k-1}^N M_k, \eta_k) \big)^{2q}.
\end{align*}
Lemmas \ref{lem:h2bound} and \ref{lem:bgtransformlp} yield, with $C_1 = \frac{1}{8}(\bar{M}/\underline{M})^2(\bar{G}/\underline{G})^2$:
\begin{align*}
\E H^2 (\pi_{k-1}^N M_k, \eta_k) 
&= \E H^2 \big(\Psi_{k-1}(\eta_{k-1}^N) M_k, \Psi_{k-1}(\eta_{k-1}) M_k\big) \le  C_1\sup_{\mathrm{osc}(\phi)\le 1} \| \eta_{k-1}^N(\phi) - \eta_{k-1}(\phi) \|_2^2, 
\end{align*}
and Lemma \ref{lem:pfstability} ensures that $\| \eta_{k-1}^N(\phi) - \eta_{k-1}(\phi) \|_2^2 \le c^2/N$ where $c = 2(\bar{M}/\underline{M})^3\bar{G}/\underline{G}$. Therefore, the claimed upper bounds hold with $C = c^2C_1$:
$$
  \| \mathrm{Law}(X_k^{1:q}) -  \eta_k^{\otimes q} \|_\mathrm{TV}^2 \le 1 - \Big(1 - \frac{C}{N}\Big)^{2q} 
  \le 2 C \frac{q}{N},
$$
where the latter form follows by Bernoulli's inequality.
\end{proof}

The upper bound in Theorem \ref{thm:uniform-poc} has different dependence on $q$ and $N$ than other results in the literature. Namely, \citep{delmoral-doucet-peters} established an upper bound of the following form (under milder assumptions):
\begin{equation}
  \| \mathrm{Law}(X_k^{1:q}) -  \eta_k^{\otimes q} \|_\mathrm{TV} \le c_k \frac{q^2}{N}.
  \label{eq:poc-bound-literature}
\end{equation}
With $q$ fixed and $N\to\infty$, the $O(N^{-1})$ dependence in \eqref{eq:poc-bound-literature} is superior to the $O(N^{-1/2})$ rate of \eqref{eq:poc-bound-hellinger}. However, for large $q$, the bounds in \eqref{eq:poc-bound-hellinger} can remain useful, unlike \eqref{eq:poc-bound-literature}. Even with $q=N$, the first upper bound in \eqref{eq:poc-bound-hellinger} has a nontrivial limit $\sqrt{1 - e^{-2C}}$ as $N\to\infty$, and with $q = o(N)$ the bound tends to zero.

We computed (numerically) the exact values for the `propagation of chaos' total variations \eqref{eq:poc-bound-hellinger} for a discrete state-space model to investigate how they behave with different combinations of $q$ and $N$. The model was the same one as in Section \ref{sec:example}, with $\epsilon=0.1$, but having a non-uniform potential: $G_k(0) = 0.1$ and $G_k(1)=1$. Figure \ref{fig:poc} shows the computed (logarithmic) total variations as a function of (logarithmic) ratios $q/N$, for two time instances: $k=4$ and $k=20$. To illustrate the different orders of decay, we added lines to the figures of orders $\Theta(q/N)$ and $\Theta(\sqrt{q/N})$. The lines are chosen to be minimal bounds that upper bound all total variations for $N\ge 64$ and $1 \le q \le N$. We also added a $\Theta(q/N)$ line which lower bounds all the displayed distances.

\begin{figure}
  \includegraphics[width=\linewidth]{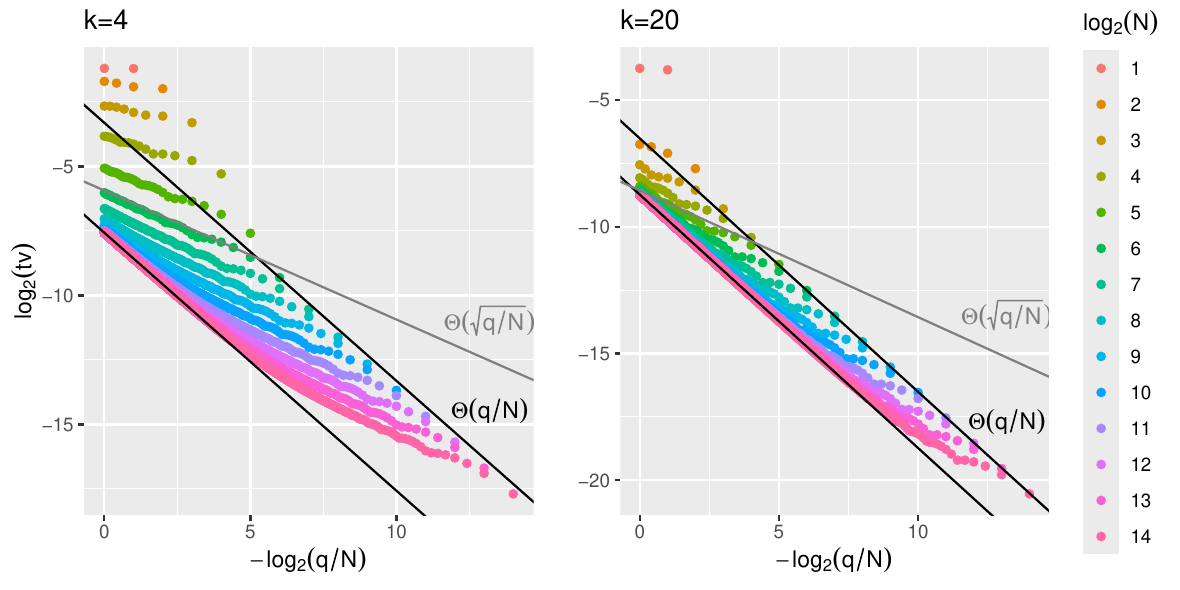}
  \caption{Propagation of chaos total variation distances \eqref{eq:poc-bound-hellinger}  as a function of $q/N$ in a discrete state-space model example for times $k=4$ and $k=20$. The lines illustrate the mentioned orders of decay.}
  \label{fig:poc}
\end{figure}

For fixed $q$ and increasing $N$, the total variations follow the $O(1/N)$ asymptotic as \eqref{eq:poc-bound-literature} suggests. With $q=N$ (i.e.~$\log_2(q/N)=0$), the total variations seem to converge to a non-trivial value as $N\to\infty$, as our bound \eqref{eq:poc-bound-hellinger} suggests. Curiously, with fixed $N$ and varying $q$, some of the patterns seem to exhibit roughly $O(\sqrt{q})$ behaviour for a range of $q$, and the total variations can be upper and lower bounded between $\Theta(q/N)$ curves, but these may be specific to the example. 

The propagation of chaos can be combined with forgetting, to get a version of Theorem \ref{thm:uniform-poc} for any initialisation, leading to bounds that suggest $q = o(N)$ particles at time $j+k$ are nearly i.i.d.~from the ideal filter and independent of $X_{j}^{1:N}$, as long as $N$ is large enough and $k\ge c\log N$. 
We denote below by $\Pi(\uarg)_q$ the projection to the first $q$ marginals, that is, for $X^{1:N} \sim \mu$ we have $X^{1:q} \sim \Pi(\mu)_q$.

\begin{theorem}
  \label{thm:forget-poc}
    There exists $c>0$ and $C>0$, only depending on the constants in \textup{(A1)}, such that for all $N > C$, $k \geq c \log N$, $1 \leq q \leq N$, and all initial states $x_0^{1:N}$ it holds that
\begin{equation*}
 \| \Pi\big(\mathbf{M}_{0,k}(x_{0}^{1:N}, \uarg)\big)_q -  \eta_k^{\otimes q} \|_\mathrm{TV}
 \le \sqrt{1 - \Big(1 - \frac{C}{N}\Big)^{2q} } \le  \sqrt{2C\frac{q}{N}}.
\end{equation*}
\end{theorem}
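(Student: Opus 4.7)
The plan is to adapt the argument of Theorem \ref{thm:tvlemmanew} with two modifications: compare the particle filter started at $x_0^{1:N}$ directly to the ideal filter $\eta_k$ (rather than to a second particle filter), and tensorise only over the first $q$ marginals (rather than all $N$), which produces the $q/N$ dependence. Write $\eta_{k-1}^N = \frac{1}{N}\sum_{i=1}^N \delta_{X_{k-1}^i}$ and $\pi_{k-1}^N = \Psi_{k-1}(\eta_{k-1}^N)$ for the empirical measures generated by the particle filter initialised at $x_0^{1:N}$. Conditional on $X_{k-1}^{1:N}$, the particles $X_k^1,\dots,X_k^q$ are i.i.d.\ with common law $\pi_{k-1}^N M_k$, so
$$\Pi\big(\mathbf{M}_{0,k}(x_0^{1:N})\big)_q = \E\big[(\pi_{k-1}^N M_k)^{\otimes q}\big].$$
Treating the (deterministic) ideal filter $\eta_k = \pi_{k-1}M_k$ as a trivially random measure, bound \eqref{eq:dimbound-2} of Lemma \ref{lem:dimfree} and Lemma \ref{lem:simpler-hellinger} give
$$\big\| \Pi(\mathbf{M}_{0,k}(x_0^{1:N}))_q - \eta_k^{\otimes q} \big\|_{\mathrm{TV}} \le \sqrt{1 - \big(1 - \E H^2(\pi_{k-1}^N M_k, \eta_k)\big)^{2q}}.$$
It therefore suffices to show that $\E H^2(\pi_{k-1}^N M_k, \eta_k) \le C/N$ whenever $k \ge c\log N$, which is the counterpart, in this setting, of the key estimate inside Theorem \ref{thm:tvlemmanew}.

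To obtain this Hellinger bound, I would apply Lemma \ref{lem:h2bound} (to $\pi_{k-1}^N M_k = \Psi_{k-1}(\eta_{k-1}^N)M_k$ and $\eta_k = \Psi_{k-1}(\eta_{k-1})M_k$) followed by Lemma \ref{lem:bgtransformlp}, exactly as in Theorem \ref{thm:tvlemmanew}, reducing the task to controlling $\sup_{\osc(\phi)\le 1} \| \eta_{k-1}^N(\phi) - \eta_{k-1}(\phi)\|_2^2$. Then insert the idealised intermediate filter $\xi_{k-1} := \Phi_{0,k-1}\big(\tfrac{1}{N}\sum_{i=1}^N\delta_{x_0^i}\big)$ and split by Minkowski:
$$\| \eta_{k-1}^N(\phi) - \eta_{k-1}(\phi) \|_2 \le \| \eta_{k-1}^N(\phi) - \xi_{k-1}(\phi) \|_2 + |\xi_{k-1}(\phi) - \eta_{k-1}(\phi)|.$$
The first term is $O(1/\sqrt{N})$ by Lemma \ref{lem:pfstability}: the bound applies verbatim to the particle filter started from the deterministic empirical measure $\tfrac{1}{N}\sum_i\delta_{x_0^i}$, since the initial empirical approximation is then exact. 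The second term is $\le \beta^{k-1}$ by Lemma \ref{lem:fkcontract}. Choosing $c$ large enough that $k \ge c \log N$ forces $\beta^{k-1} \le 1/\sqrt{N}$ (e.g.\ $c = 1/(2\log \beta^{-1})$) yields $\E H^2(\pi_{k-1}^N M_k, \eta_k) \le C/N$ for a constant $C$ depending only on the constants in (A1).

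Substituting back into the displayed inequality gives the first upper bound; the second follows from Bernoulli's inequality $1 - (1-C/N)^{2q} \le 2Cq/N$. The main obstacle is largely bookkeeping --- assembling constants from Lemmas \ref{lem:h2bound}, \ref{lem:bgtransformlp}, \ref{lem:pfstability} and \ref{lem:fkcontract} into a single $C$ (and corresponding $c$) that works simultaneously for all $N>C$ and $1\le q\le N$ --- together with the mild observation that Lemma \ref{lem:pfstability} continues to hold when the particles at time $0$ are placed deterministically, which can only improve the Monte Carlo error at the initial step.
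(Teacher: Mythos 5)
Your proposal is correct and follows essentially the same route as the paper: tensorise the Hellinger distance over the first $q$ marginals via Lemmas \ref{lem:simpler-hellinger} and \ref{lem:dimfree}\eqref{eq:dimbound-2}, reduce to an $L^2$ bound via Lemmas \ref{lem:h2bound} and \ref{lem:bgtransformlp}, and split through the ideal filter started at the initial empirical measure, controlling the two pieces by Lemma \ref{lem:pfstability} and Lemma \ref{lem:fkcontract} respectively. The only (acknowledged) bookkeeping discrepancy is the constant $c$: with $c=1/(2\log\beta^{-1})$ you only get $k-1\ge \log N/(2\log\beta^{-1})-1$, so as in Theorem \ref{thm:tvlemmanew} one should take $c=1/(2\log\beta^{-1})+1/\log 2$ to ensure $\beta^{k-1}\le C_2/\sqrt{N}$.
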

\begin{proof}
  Using the same notation $(\xi_{t}^N$, $\xi_t)$ as in Theorem \ref{thm:tvlemmanew}, we argue as in the proof of Theorem \ref{thm:uniform-poc}:
\begin{align*}
 \| \Pi\big(\mathbf{M}_{0,k}(x_{0}^{1:N}, \uarg)\big)_q -  \eta_k^{\otimes q} \|_\mathrm{TV}^2
&\le 1 - \big(1 - \E H^2 (\Phi_k (\xi_{k-1}^N), \Phi_k(\eta_{k-1})) \big)^{2q} \\
&\le 1 - \big(1 - C_1 \sup_{\mathrm{osc}(\phi)\le 1} \| \xi_{k-1}^N(\phi) - \eta_{k-1}(\phi) \|_2^2 \big)^{2q}.
\end{align*}
Using Lemma \ref{lem:pfstability}, we further bound $\| \xi_{k-1}^N(\phi) - \eta_{k-1}(\phi) \|_2$ by
\begin{align*}
\| \xi_{k-1}^N(\phi) - \xi_{k-1}(\phi)  \|_2
+ \| \xi_{k-1}(\phi) - \eta_{k-1}(\phi)  \|_2  
 \;\le\; \frac{C_2}{\sqrt{N}} + \| \xi_{k-1}(\phi) - \eta_{k-1}(\phi)  \|_2,
\end{align*}
and by Lemma \ref{lem:fkcontract}, we conclude that this is at most $2C_2/\sqrt N$ when $k$ satisfies $\beta^{k-1} \leq C_2/\sqrt{N}$ (e.g., when $k \geq c \log N$ with $c = 1/( 2\log(\beta^{-1})) + 1/\log2)$).
The claim follows with $C = 4 C_1 C_2^2$.
\end{proof}

Analogues of Theorems \ref{thm:uniform-poc} and \ref{thm:forget-poc} also hold for the conditional particle filter. We omit the proofs, which are identical to above, except they employ our $L^p$ bound for the CPF (Theorem \ref{thm:cpfstability}) in place of Lemma \ref{lem:h2bound} and Lemma \ref{lem:bgtransformlp}. We recall the shorthand notations $x^* = x_{0:k-1}^*$ and $\M_{0,k}^{ x^*} = \M_{1}^{x^*_{0}} \cdots \M_{k}^{x^*_{k-1}}$.
\begin{theorem}
  \label{thm:uniform-poc-cpf}
  Let $X_k^i$ be as in Algorithm \ref{alg:cpf}, and let $\eta_k$ denote the corresponding ideal filter. There exists $C\in(0,\infty)$ which depends only on the constants in \textup{(A1)}, such that for all $k\ge 1$, $N > C$, $1 \le q\le N$, and references $x^*$ it holds that:
  \begin{equation*}
   \| \mathrm{Law}(X_k^{1:q}) -  \eta_k^{\otimes q} \|_\mathrm{TV}
   \le \sqrt{1 - \Big(1 - \frac{C}{N}\Big)^{2q} } \le  \sqrt{2C\frac{q}{N}}.
  \end{equation*}
\end{theorem}
\begin{theorem}
  \label{thm:forget-poc-cpf}
    There exists $c>0$ and $C>0$, only depending on the constants in \textup{(A1)}, such that for all $N > C$, $k \geq c \log N$, $1 \leq q \leq N$, and all initial states $x_0^{1:N}$ and references $x^*$ it holds that
\begin{equation*}
 \| \Pi\big(\mathbf{M}^{x^*}_{0,k}(x_{0}^{1:N}, \uarg)\big)_q -  \eta_k^{\otimes q} \|_\mathrm{TV}
 \le \sqrt{1 - \Big(1 - \frac{C}{N}\Big)^{2q} } \le  \sqrt{2C\frac{q}{N}}.
\end{equation*}
\end{theorem}
\section{Implications for algorithms that couple particle filters}
\label{sec:couplings}

Couplings of particle filters have been suggested in the context of multilevel Monte Carlo \citep[e.g.][]{gregory-cotter-reich,jasra-kamatani-law-zhou,sen-thiery-jasra,jasra-yu}.  Couplings of CPFs have been used in their theoretical analysis \citep{chopin-singh,lee-singh-vihola} and in algorithms that generate unbiased estimates for smoothing expectations \citep{jacob-lindsten-schon,lee-singh-vihola}. We review here some implications of our results in the coupling context.

In what follows, $\textsc{MaxCouple}(\mu,\nu)$ stands for a total variation maximal coupling  \cite[cf.][]{thorissoncoupling}; that is, if $(X,Y) \sim \textsc{MaxCouple}(\mu,\nu)$, then $X\sim \mu$, $Y\sim \nu$ and $\mathbb{P}(X\neq Y) = \| \mu - \nu \|_{\mathrm{TV}}$. In practice, we can simulate from a maximal coupling if we can sample from $\mu$ and $\nu$ and pointwise evaluate their densities with respect to a common dominating measure. Namely, if we draw $X\sim \mu$ and use Algorithm \ref{alg:maxcouple-generic} to simulate $Y\gets \textsc{CondMaxCouple}\big(X, \mu, \nu)$, it is straightforward to verify that $(X,Y) \sim \textsc{MaxCouple}(\mu,\nu)$.
\begin{algorithm}[H]
  \caption{\textsc{CondMaxCouple}$\big(X, \mu, \nu)$}
  \label{alg:maxcouple-generic}
  \begin{algorithmic}[1]
  \State \textbf{with probability} $1\wedge \frac{\nu(X)}{\mu(X)}$
  \textbf{output} $X$
  \Loop
  \State Draw $Y \sim \nu$
  \State \textbf{with probability} $1 - \big(1 \wedge \frac{\mu(Y)}{\nu(Y)}\big)$
  \textbf{output} $Y$
  \EndLoop
  \end{algorithmic}
\end{algorithm}

Our results are directly relevant when two particle filters share common dynamics (models), except for having different initialisations. In such a context, let $\vec{X}_0\neq \tilde{\vec{X}}_0$ be two arbitrary initial particle filter states, and denote the predictive distributions of the filters as
$\mu_{k} := \Phi_k(N^{-1} \sum_{i=1}^N \delta_{X_{k-1}^i} )$ and 
$\tilde{\mu}_{k} := \Phi_k(N^{-1} \sum_{i=1}^N \delta_{\tilde{X}_{k-1}^i} )$.
One step of the the coupling algorithm suggested in \citep[Section 3.3]{jasra-yu} may be summarised as follows:
\begin{equation}
  (X_k^i,\tilde{X}_k^i) \sim \textsc{MaxCouple}(\mu_{k} , \tilde{\mu}_{k}), \quad\text{independently for }i=1,\ldots,N.
  \label{eq:individual-maximal-coupling}
\end{equation}
We have $\P(X_k^i \neq \tilde{X}_k^i) = \E \| \mu_{k} - \tilde{\mu}_{k} \|_{\mathrm{TV}}$, which can be upper bounded using the triangle inequality:
\begin{equation*}
  \E \| \mu_{k} - \tilde{\mu}_{k} \|_{\mathrm{TV}} \,\leq\, \E \big[ \| \mu_{k} - \Phi_{1,k}(\mu_1) \|_{\mathrm{TV}} + \| \Phi_{1,k}(\mu_1) - \Phi_{1,k}(\tilde{\mu}_1)\|_{\mathrm{TV}} + \| \tilde{\mu}_{k} - \Phi_{1,k}(\tilde{\mu}_1)\|_{\mathrm{TV}} \big].
\end{equation*}
Lemma \ref{lem:TV-rootN-upper} ensures that the first and the third terms are of order $O(N^{-1/2})$, and so is the middle term, provided that $k \ge O(\log N)$, by Lemma \ref{lem:fkcontract}. Consequently, by independence and Jensen's inequality,
\begin{align}
\P(\mathbf{X}_{k} = \tilde{\mathbf{X}}_{k}) 
&\,=\, \E \big[\P(\vec{X}_k = \tilde{\vec{X}}_k \mid \vec{X}_{k-1},\tilde{\vec{X}}_{k-1}) \big] \,=\, \E \big(1 - \| \mu_{k} - \tilde{\mu}_{k} \|_\mathrm{TV}\big)^N
\,\ge\, \big(1- O(N^{-1/2})\big)^{N}.
\label{eq:individual-coupling-bound}
\end{align}

Instead of \eqref{eq:individual-maximal-coupling}, we can also maximally couple 
$\mathbf{M}_k(\vec{X}_{k-1},\uarg)$ and $\mathbf{M}_k( \tilde{\vec{X}}_{k-1}, \uarg)$, that is,
the two particle filter states:
\begin{equation}
(\vec{X}_k,\tilde{\vec{X}}_k) \sim \textsc{MaxCouple}(\mu_{k}^{\otimes N}, \tilde{\mu}_{k}^{\otimes N}).
\label{eq:state-maximal-coupling}
\end{equation}
For this algorithm, under (A1) as in the proof of Theorem \ref{thm:tvlemmanew}, we have that if $k \ge  O(\log N)$, 
\begin{align}
\P(\vec{X}_k \neq \tilde{\vec{X}}_k) 
&\,=\, \E \big[\P(\vec{X}_k \neq \tilde{\vec{X}}_k \mid \vec{X}_{k-1},\tilde{\vec{X}}_{k-1}) \big] 
 \,=\, \E \| \mu_{k}^{\otimes N} - \tilde{\mu}_{k}^{\otimes N} \|_{\mathrm{TV}}
\,\le\, \sqrt{1 - (1 - O(N^{-1}))^{2N}}.
\label{eq:state-aximal-coupling-bound}
\end{align} 
The bound above guarantees that $\P(\vec{X}_k = \tilde{\vec{X}}_k)$ remains bounded away from zero for large $N$, unlike \eqref{eq:individual-coupling-bound}, which degrades to zero as $N\to\infty$.

Using maximal coupling particle filter states in this setting is novel to our knowledge, and can be implemented in practice. Indeed, simulation from \eqref{eq:state-maximal-coupling} can be implemented in (expected) $O(N^2)$ time like \eqref{eq:individual-maximal-coupling}; see the discussion on similar conditional particle filter coupling algorithms in \citep{karjalainen-lee-singh-vihola-mixing}. One may view our contribution as demonstrating that \eqref{eq:state-maximal-coupling} can be reasonable when (A1) holds and possibly more generally. In fact, the conditional coupling probability $\P(\vec{X}_k = \tilde{\vec{X}}_k \mid \vec{X}_{k-1},\tilde{\vec{X}}_{k-1})$ of the filter state maximal coupling in \eqref{eq:state-maximal-coupling} is guaranteed to be superior to that of \eqref{eq:individual-maximal-coupling}. 
Moreover, setting $\mu = \mu_{k}^{\otimes N}$ and $\nu =  \tilde{\mu}_{k}^{\otimes N}$ in Lemma \ref{lem:producttv}, the term on the left and the upper bound in \eqref{eq:producttv} coincide with $\P(\vec{X}_k \neq \tilde{\vec{X}}_k \mid \vec{X}_{k-1},\tilde{\vec{X}}_{k-1})$ of \eqref{eq:state-maximal-coupling} and \eqref{eq:individual-maximal-coupling}, respectively.

We stress that the discussion above is only about conditional probabilities, and they are  upper bounds which might not be tight when the algorithms are iterated $k$ steps. It would be interesting to investigate whether \eqref{eq:individual-maximal-coupling} or \eqref{eq:state-maximal-coupling} is genuinely better in relevant scenarios. Moreover, the multilevel context of \cite{jasra-yu} differs from what is discussed above, because there the particle filter dynamics are (slightly) different.

\section{On the forgetting of the $q$-marginals}
\label{sec:qmarginals}

In Section \ref{sec:pochaos}, we considered propagation of chaos bounds for the $q$-marginals $\Pi \big( \M_{0,k}(x_0^{1:N}, \uarg) \big)_{q}$, that is, their total variation distance to the product distribution $\eta_k^{\otimes q}$. Those bounds do not converge to zero as $k$ increases.
We now return to Example \ref{exm:badmodel}, and combine ideas from the previous sections to prove a forgetting result for the $q$-marginals, which gives Theorem \ref{thm:simpler-example} as a special case. 
In this example the forgetting is guaranteed by an \enquote{$N$-independent} growth condition $\lim_N \delta_\varepsilon \log q(N) - k(N) = -\infty$, and conversely, the forgetting may fail (depending on the initial state) if the limit equals $\infty$. 

A crucial part in the proof is that when we apply the independent coupling \eqref{eq:individual-maximal-coupling} to this model, the expected Hamming distance between $X_k^{1:N}$ and $\tilde X_k^{1:N}$ is contracted at each time step $k$ by a constant factor. This fact relies on the choice of potentials $G=1$, and does not hold under (A1) in general. Secondly, the states $X_k^1, \ldots, X_k^N$ are Bernoulli instead of complicated mixtures of distributions, which simplifies the analysis. 

\begin{theorem}
\label{thm:simpler-example2}
Consider Example \ref{exm:badmodel} with some $\varepsilon \in (0, 1/2)$. Let $x_0^{1:N} = (1, \ldots, 1)$ and $\tilde x_0^{1:N} = (0, \ldots, 0)$, and for each $N \geq 1$, let $q(N) \in \{1, \ldots, N \}$. With $\delta_\varepsilon = 1/\log((1-2\varepsilon)^{-2})$, it holds that
\begin{enumerate}
\renewcommand{\labelenumi}{(\roman{enumi})} 
\item If $ \delta_\varepsilon \log q(N) - k(N) \to \infty$, then
\[
\lim_{N\rightarrow \infty}\| \Pi \big( \M_{0,k(N)}(x_0^{1:N}, \uarg) \big)_{q(N)} - \Pi\big( \M_{0,k(N)}(\tilde x_0^{1:N}, \uarg) \big)_{q(N)} \|_{\mathrm{TV}} = 1.
\]
\item If $ \delta_\varepsilon \log q(N) - k(N) \to -\infty$, then
\[
\lim_{N\rightarrow \infty}\| \Pi \big( \M_{0,k(N)}(x_0^{1:N}, \uarg) \big)_{q(N)} - \Pi\big( \M_{0,k(N)}(\tilde x_0^{1:N}, \uarg) \big)_{q(N)} \|_{\mathrm{TV}} = 0.
\]
\end{enumerate}
\end{theorem}

In the proof we will use the fact that for this model, the coupling probability in the independent maximal coupling \eqref{eq:individual-maximal-coupling} can be derived exactly:
\[
\mu_k = \Phi_k \Big(N^{-1} \sum_{i=1}^N \delta_{X_{k-1}^i} \Big) \;=\; \mathrm{Ber}\Big(N^{-1} \sum_{i=1}^N X_{k-1}^i \Big) M_k,
\]
hence
\[
\mu_k(\{0\}) = 1-\mu_k(\{1\}) \;=\; \varepsilon \Big( N^{-1} \sum_{i=1}^N X_{k-1}^i \Big) + (1-\varepsilon) \Big( 1- N^{-1} \sum_{i=1}^N X_{k-1}^i \Big),
\]
and denoting $D_{k-1} = | \sum_{i=1}^N X_{k-1}^i - \sum_{i=1}^N \tilde X_{k-1}^i|$, it follows from $\|\mathrm{Ber}(p) - \mathrm{Ber}(q)\|_{\mathrm{TV}} = |p-q|$ that if $(X, \tilde X) \sim \textsc{MaxCouple}(\mu_{k} , \tilde{\mu}_{k})$, then $\|\mathrm{Law}(X) - \mathrm{Law}(\tilde X)\|_\mathrm{TV} = |\mu_k(\{1\}) - \tilde \mu_k(\{1\})|$ equals
\begin{align}
\Big|\, \varepsilon N^{-1} \sum_{i=1}^N (\tilde X_{k-1}^i - X_{k-1}^i)  + (1-\varepsilon)   N^{-1} \sum_{i=1}^N ( X_{k-1}^i - \tilde X_{k-1}^i ) \, \Big| \;=\; (1-2 \varepsilon) N^{-1} D_{k-1}. \label{eq:maxcouplebtv}
\end{align} 
One may verify that a maximal coupling here is monotone: if $\mu_k(\{1\}) \geq \tilde \mu_k(\{1\})$, then $\P (X \geq \tilde X) = 1$.

\begin{proof}
(i): Let $k \geq 1$ and $X_k^{1:N} \sim \M_{0,k}(x_0^{1:N}, \uarg)$, $Y_k^{1:N} \sim \M_{0,n}(\tilde x_0^{1:N}, \uarg)$. Define the proportions of 1's,
\begin{align}
\label{eq:proportions}
P_{k}^{q}= \frac{1}{q}\sum_{i=1}^q X_k^i, \quad \text{and} \quad \tilde P_{k}^{q}= \frac{1}{q} \sum_{i=1}^q Y_k^i,
\end{align}
and denote the distributions of $q P_{k}^{q}$ and $q \tilde P_{k}^{q}$ by $p_k$ and $\tilde p_k$, respectively.
We first establish that
\begin{align}
\label{eq:pqtv}
\| p_k - \tilde p_k \|_\mathrm{TV} \leq \| \Pi \big( \M_{0,k}(x_0^{1:N}, \uarg) \big)_{q} - \Pi \big( \M_{0,k}(\tilde x_0^{1:N}, \uarg) \big)_{q} \|_{\mathrm{TV}}.
\end{align}
Let $(\tilde X^{1:q}, \tilde Y^{1:q})$ be any coupling of $(X^{1:q}_k, Y^{1:q}_k)$. It follows that $(\sum_{i=1}^q \tilde X^i, \sum_{i=1}^q \tilde Y^i)$ forms a coupling of $(qP_k^q, q \tilde P_k^q)$. Since $\{\tilde X^{1:q} = \tilde Y^{1:q} \} \subset \{\sum_{i=1}^q \tilde X^i = \sum_{i=1}^q \tilde Y^i\}$, the coupling inequality gives
\begin{align*}
\| p_k - \tilde p_k \|_\mathrm{TV} \leq \P\Big(\sum_{i=1}^q \tilde X^i \neq \sum_{i=1}^q \tilde Y^i\Big) \leq \P\Big(\tilde X^{1:q} \neq \tilde Y^{1:q}\Big),
\end{align*}
and \eqref{eq:pqtv} follows by taking the infimum over all couplings $(\tilde X^{1:q}, \tilde Y^{1:q})$. 

We proceed by bounding $\| p_k - \tilde p_k \|_\mathrm{TV}$. Let $(P_k, \tilde P_k)$ be a maximal coupling of $(P_k^q, \tilde P_k^q)$. Now,
\begin{align*}
\| p_k - \tilde p_k \|_\mathrm{TV} 
\,=\, \P(P_k \neq \tilde{P}_k) \,\ge\, \P\Big(P_k > \frac{1}{2}, \tilde{P}_k < \frac{1}{2}\Big)
& \,=\, 1 - \P\Big(\{P_k \le \frac{1}{2}\} \cup \{ \tilde{P}_k \ge \frac{1}{2}\}\Big) \\
& \,\ge\, 1 - \P\Big(P_k \le \frac{1}{2} \Big) - \P\Big( \tilde{P}_k \ge \frac{1}{2}\Big),
\end{align*}
where by Markov's inequality
\begin{align*}
\P\Big(P_k \le \frac{1}{2}\Big)
&\,\leq\, \P\Big( \big| \E[P_k] - P_k \big| \ge \E[P_k] - \frac{1}{2}\Big) 
\,\leq\, \frac{\Var(P_k)}{(\E[P_k] - \frac{1}{2})^2}.
\end{align*}
By symmetry, the same upper bound applies to $\P(\tilde P_k \geq 1/2)$, and we use Lemma \ref{lem:Pnmoments} to bound
\[
\Var(P_k) \leq \frac{1}{4q}\frac{1}{1-(1-2\varepsilon)^2},
\]
and so
\begin{align*}
\| p_k - \tilde p_k \|_\mathrm{TV} \,\geq\, 1 - 2 \frac{\Var(P_k)}{(\E[P_k] - \frac{1}{2})^2} \,\geq\,  
1 - 2 \frac{1}{q} \frac{1}{1 - (1-2\varepsilon)^2} \frac{1}{(1-2 \varepsilon)^{2k}}.
\end{align*}
The right-hand side approaches one when $\delta_\varepsilon \log N - k \to \infty$.

(ii):
Denote again $X_k^{1:N} \sim \M_{0,k}(x_0^{1:N}, \uarg)$ and $Y_k^{1:N} \sim \M_{0,n}(\tilde x_0^{1:N}, \uarg)$. We use the independent maximal coupling in \eqref{eq:individual-maximal-coupling} for every time step $k \geq 1$, and denote $D_k := | \sum_{i=1}^N X_k^i -  \sum_{i=1}^N Y_k^i|$ and $a := 1-2\varepsilon$. 
As noted above, the maximal coupling is necessarily monotone. Since $x_0^{i} \geq \tilde x_0^{i}$, $i = 1, \ldots, N$, it follows that $x_k^{i} \geq \tilde x_k^{i}$, $i = 1, \ldots, N$, and so $D_k = \sum_{i=1}^N \I(X_k^i \neq Y_k^i)$. By \eqref{eq:maxcouplebtv} and conditional independence of $(X_k^i, Y_k^i)_{i=1}^N$, it holds that
$D_k \mid D_{k-1} \sim \mathrm{Bin}(N, aD_{k-1}/N )$, hence $\E(D_k \mid D_{k-1}) = aD_{k-1}$ and
\begin{align*}
\E(D_k^2 \mid D_{k-1}) &\;=\; N[aD_{k-1}/N (1-aD_{k-1}/N)] + N^2(aD_{k-1}/N)^2 \;=\; aD_{n-1} + a^2D_{n-1}^2 (1-1/N),
\end{align*}
so that $\E(D_k) = N a^k$ by $D_0 = N$, and the recursive bound $\E(D_k^2)\leq Na^{k} + a^2\E(D_{k-1}^2)$ gives 
\begin{align*}
\E(D_k^2) &\;\leq\; \frac{N a^k}{1-a} + a^{2k}N^2.
\end{align*}
Let $P_k = N^{-1} \sum_{i=1}^N X_k^i$, $\tilde P_k = N^{-1} \sum_{i=1}^N Y_k^i$. Clearly $N(P_k - \tilde P_k) \!=\! \sum_{i=1}^N (X_k^i - Y_k^i) \!\leq\! D_k$, and by the above, 
\begin{align}
\label{eq:pnbound}
\E [(P_k - \tilde P_k)^2] \;\leq\; \E(D_k^2)/N^2 \;\leq\; \frac{a^k}{N(1-a)} + a^{2k}.
\end{align}
Write $ \Pi \big( \M_{0,k}(x_0^{1:N}, \uarg) \big)_{q} = \E[\mu_k^{\otimes q}]$ and $\Pi \big( \M_{0,k}(\tilde x_0^{1:N}, \uarg) \big)_{q} = \E[\nu_k^{\otimes q}]$, where
\[
\mu_k = \frac{1}{N} \sum_{i=1}^N M_k(X_{k-1}^i, \cdot), \quad \nu_k = \frac{1}{N} \sum_{i=1}^N M_k(Y_{k-1}^i, \cdot).
\]
Lemmas \ref{lem:simpler-hellinger} (with Bernoulli's inequality) and \ref{lem:h2bernoulli}  give
\begin{align*}
\| \E[\mu_k^{\otimes q}] - \E[\nu_k^{\otimes q}]\|_{\mathrm{TV}} \;\leq\;  \sqrt{2q\E H^2(\mu_k, \nu_k)}
\;\leq\; \sqrt{2q \E[(P_k-\tilde P_k)^2]/(4 \varepsilon)}, 
\end{align*}
where we used $M_k \geq \varepsilon$. Now it follows by \eqref{eq:pnbound} that
\begin{align*}
\sqrt{2q \E[(P_k-\tilde P_k)^2]/(4\varepsilon)} &\;\leq\; \sqrt{\frac{q a^k}{N} \frac{ 1}{2\varepsilon (1-a)} + qa^{2k} \frac{1}{2 \varepsilon}},
\end{align*}
where the second term tends to zero when $ \delta_\varepsilon \log q - k \to -\infty$ with $\delta_\varepsilon =1/\log((1-2\varepsilon)^{-2})$, and since $qa^k/N \leq a^k$, the first term tends to zero as well.
\end{proof}

\begin{remark}
For $q = 1$ we can do the analysis in the previous theorem exactly:
\begin{align*}
 \| \Pi\big( \M_{0,k}(x_0^{1:N}, \uarg)\big)_{q} - \Pi(\M_{0,k}(\tilde x_0^{1:N}, \uarg)\big)_{q} \|_{\mathrm{TV}}  \;=\; | \E(X_k^1) - \E(Y_k^1)|  \;=\; (1-2\varepsilon)^k.
\end{align*}
Hence for any $\delta \in (0,1)$, independently of $N$, $(1-2\varepsilon)^k \leq \delta \iff k \geq 2 \delta_\varepsilon \log(\delta^{-1})$.
\end{remark}

\begin{lemma}
\label{lem:Pnmoments}
 Define $X_k^{1:N} \sim \M_{0,k}(x_0^{1:N}, \uarg)$ and $Y_k^{1:N} \sim \M_{0,k}(\tilde x_0^{1:N}, \uarg)$, and let $q \in \{1, \ldots, N \}$. For $P_k^q =\frac{1}{q}\sum_{i=1}^q X_k^i $ and $\tilde P_k^q = \frac{1}{q}\sum_{i=1}^q Y_k^i$, it holds that
\begin{gather*}
\E[P_k^q] \;=\; (1-2\varepsilon)^k/2 + \frac12, \quad \E[\tilde{P}_k^q] \;=\; -(1-2\varepsilon)^k/2 + \frac{1}{2}, \quad \text{and} \\
\Var(P_k^q) \;=\; \Var(\tilde{P}_k^q) \;\leq\; \frac{1}{4q} + \frac{1}{4N} \frac{(1-2\varepsilon)^2}{1 - (1-2\varepsilon)^2}.
\end{gather*}

\end{lemma}
\begin{proof}
Define $\Y_k^q := P_k^q - 1/2$ and $\tilde\Y_k^q := \tilde P_k^q - 1/2$. Using the definition of $\M_k$, it follows that
\begin{align*}
\E[\Y_k^q\mid \Y_{k-1}^N] \;=\; \E[P_k^q\mid P_{k-1}^N] - 1/2  \;=\; \varepsilon + (1-2\varepsilon) P_{k-1}^N - 1/2 
&\;=\; (1-\varepsilon) \Y_{k-1}^N - \varepsilon \Y_{k-1}^N \\
&\;=\; (1-2\varepsilon) \Y_{k-1}^N,
\end{align*}
and since $S_0^N = 1/2$, we conclude that $\E[\Y_k^q] = (1-2\varepsilon)^n/2 = \E[P_k^q] -1/2$.
The formula for $\E[\tilde P_k^q]$ follows from $\tilde S_0^N = -1/2$ by the same argument. Since $q P_k^q \, \vert \, P_{k-1}^N \sim \text{Binomial}(q, \, 1/2 + (1-2\varepsilon) \Y_{k-1}^N )$,
\begin{align*}
\Var(\Y_k^q\mid \Y_{k-1}^N) \;=\; \frac{1}{q^2}\Var(qP_k^q\mid P_{k-1}^N) 
&\;=\; \frac{1}{q} \big(1/2 + (1-2\varepsilon) \Y_{k-1}^N \big)
\big(1/2 - (1-2\varepsilon) \Y_{k-1}^N \big) \\
&\;=\; \frac{1}{q} \Big( \frac{1}{4} - 
(1-2\varepsilon)^2 (\Y_{k-1}^N)^2 \Big) \;\le\; \frac{1}{4q}.
\end{align*}
For $q=N$, the upper bound follows from the recursive relation
\begin{align*}
\Var(\Y_k^N) \;=\; \E[\Var(\Y_k^N \mid \Y_{k-1}^N)] + \Var(\E[\Y_k^N\mid \Y_{k-1}^N]) 
&\;\le\; \frac{1}{4N} + (1-2\varepsilon)^2 \Var(\Y_{k-1}^N) \\
&\;\le\; \frac{1}{4N} \sum_{i=0}^{k-1} (1-2\varepsilon)^{2i} 
\;\le\; \frac{1}{4N} \frac{1}{1 - (1-2\varepsilon)^2},
\end{align*}
where the second inequality followed from $\Var(\Y^N_0) = 0$. For $q < N$, we conclude by
\[
\Var(P_k^q) \;=\; \E[\Var(P_k^q \mid \Y_{k-1}^N)] + \Var(\E[P_k^q \mid \Y_{k-1}^N]) 
\;\le\; \frac{1}{4q} + (1-2\varepsilon)^2 \Var(S_{k-1}^N). \qedhere
\]
\end{proof}

\begin{lemma}
\label{lem:h2bernoulli}
For $p \in (0,1)$, $q \in (0,1)$,
\[
H^2(\mathrm{Ber}(p), \mathrm{Ber}(q)) \leq \frac{(p-q)^2}{4 \min\{p,q,1-p,1-q\}}.
\] 
\end{lemma}
\begin{proof}
By definition, 
$H^2(\mathrm{Ber}(p), \mathrm{Ber}(q)) =  ( (\sqrt p - \sqrt q)^2 + (\sqrt{1-p} - \sqrt{1-q})^2 )/2$,
and since $|p - q| = |\sqrt p + \sqrt q| |\sqrt p -\! \sqrt q| \;\geq\; 2 \min\{ \sqrt p, \sqrt q \} |\sqrt p -\! \sqrt q|$, it follows that
\[
(\sqrt p - \!\sqrt q)^2 \leq \frac{(p-q)^2}{4 \min\{p,q\}}, \quad (\sqrt{1-p} - \sqrt{1-q})^2 \leq \frac{(p-q)^2}{4 \min\{1-p,1-q\}}. \qedhere
\]
\end{proof}
\section{Application to out-of-sequence measurements}
\label{sec:oos}
One application of our theory and the coupling methodology discussed above, is to the problem of filtering for state-space models in the presence of so-called \emph{out-of-sequence} measurements, which 
are common in real-time
target tracking applications \cite{OrM05,OrG2008}. Out-of sequence measurements arrive later than other measurements typically due to communication delays. That is, say $y_{\tau}$ is received after a delay $l$, then the observations arrive at the following order: $(y_{\tau+1},\ldots,y_{\tau+l-1},y_{\tau})$.

For ease of exposition, let us assume hereafter that the measurement $y_0$ is delayed. A particle approximation of $\tilde{\eta}_{k+1} = p(x_{k+1}\mid y_{1:k})$ has already been computed, and the missing measurement $y_{0}$ is received late.
This delayed measurement is potentially materially important for the estimation, 
and therefore we wish to compute the particle approximation of $\eta_{k+1} = p(x_{k+1}\mid y_{1:k},y_{0})$.
The problem may be approached by updating the existing filter via 
\[
p(x_{k}\mid y_{1:k},y_{0})\propto p(y_{0}\mid x_{k}, y_{1:k-1})p(x_{k}\mid y_{1:k}).
\]
The problem is that the likelihood $p(y_{0}\mid x_{k}, y_{1:k-1})$ cannot be
computed and strategies have been proposed to approximate it \cite{OrM05,OrG2008}.

The results of the present paper suggest an approach based on a coupled particle filter $(\vec{X}_t,\tilde{\vec{X}}_t)$ where $\vec{X}_t$ targets $\eta_{k+1}$ and $\tilde{\vec{X}}_t$ targets
$\tilde{\eta}_{k+1}$. The filters differ only by their initial potential: $G_0(x)=p(y_0\mid x)$ for the former and $G_0\equiv 1$ for the latter. If the filter states become coupled $\vec{X}_{\sigma}=\tilde{\vec{X}}_{\sigma}$ at some time $\sigma < k+1$, then $\tilde{\vec{X}}_{k+1}=\vec{X}_{k+1}$, and therefore $\tilde{\vec{X}}_{k+1}$ forms a valid particle approximation also for $\eta_{k+1}$. If coupling does not occur, then we adopt $\vec{X}_{k+1}$ as the particle approximation of $\eta_{k+1}$. There are two reasons why coupling may not occur. The first being the sub-optimality of the coupling algorithm, as it is not the actual maximal coupling of $\eta_{k+1}$ and $\tilde{\eta}_{k+1}$. The second is that $\eta_{k+1}$ differs from $\tilde{\eta}_{k+1}$ substantially, that is, the measurement $y_{0}$ is materially important for the estimation.

In practice, the particle system $\tilde{\vec{X}}_0,\ldots,\tilde{\vec{X}}_k$ already exists, and in order to access a coupled filter as discussed above, we need to retrospectively simulate the filter states $\vec{X}_0,\ldots,\vec{X}_{\sigma}$.  Namely, we may use Algorithm \ref{alg:maxcouple-generic} and set $\vec{X}_t \gets \textsc{CondMaxCouple}(\tilde{\vec{X}}_t, \tilde{\mu}_{t}^{\otimes N}, \mu_t^{\otimes N})$  where $\tilde{\mu}_{t}=\Phi_{t}(N^{-1}\sum_{i=1}^{N}\delta_{\tilde{X}_{t-1}^{i}})$ and $\mu_{t}=\Phi_{t}(N^{-1}\sum_{i=1}^{N}\delta_{X_{t-1}^{i}})$, until $\vec{X}_t=\tilde{\vec{X}}_t$, and set $\sigma=t$. As discussed in Section \ref{sec:couplings}, our theory implies that the coupling time $\sigma = O(\log N)$. Therefore, the expected processing cost is bounded independently of the delay $k$, unlike in a naive approach where the particle filter is re-run starting from the delayed measurement.

The method discussed above ensures a valid handling of out-of-sequence measurements in a particle filter.  The coupled system provides also access to approximations of the smoothing distributions. Namely, if a smoothing particle approximation is available at the time of coupling $\sigma$, then a backward smoothing algorithm \cite[see, e.g.][and references therein]{godsill-doucet-west,douc-garivier-moulines-olsson,dau-chopin} can be used to re-calculate the particle approximation of the smoothing distributions before the coupling time $\sigma$, using $\vec{X}_0,\ldots,\vec{X}_{\sigma-1}$.

The procedure detailed above corresponds to the coupled particle filter with the coupling \eqref{eq:state-maximal-coupling}. We may similarly use the individual coupling \eqref{eq:individual-maximal-coupling} or alternate these as discussed in \cite{karjalainen-lee-singh-vihola-mixing}. These may have better practical behaviour, similar to what was observed in the experiments of \cite{karjalainen-lee-singh-vihola-mixing}.
We note that there is a storage cost associated with the method. In practice, the out-of-sequence measurements are
processed until a certain maximum delay to curtail the storage cost \cite{OrG2008}. Measurements which are delayed beyond this maximum are disregarded. The coupling times $\sigma$ can also be used as a diagnostic about the sufficient delay after which out-of-sequence measurements can be safely ignored.

\section{Discussion}\label{sec:discussion}

We established a new result in Theorem \ref{thm:pfforget} on how quickly the particle filter forgets its state in total variation sense. This is evidently a significant improvement over the result in Lemma \ref{lem:easybound}, which was originally reported in \cite{tadic-doucet-recursive}. We then gave an example in Section \ref{sec:example} where our forgetting rate was shown to be tight. A similar proof technique to Theorem \ref{thm:pfforget} was then used to find the forgetting properties of the conditional particle filter (CPF) in Theorem \ref{thm:cpfforget}, and to derive new propagation of chaos type bounds (Section \ref{sec:pochaos}).

Our result and techniques for understanding the forgetting properties give new theoretical insights into the behaviour of coupled particle filters (Section \ref{sec:couplings}), leading to a justified method for processing of out-of-sequence measurements (Section \ref{sec:oos}). We investigate the mixing time of the so-called conditional backward sampling particle filter in a related article \citep{karjalainen-lee-singh-vihola-mixing}, relying on analysis of coupled conditional particle filters. The results of the present paper are essential for this analysis, which provides a sharp mixing time bound (in terms of the time horizon).

In addition to these direct consequences, we think that
the forgetting result for the particle filter could also turn out useful in improving the analysis of the bias of recursive maximum likelihood implemented with a particle filter \cite{tadic-doucet-recursive}. Forgetting could also be a useful tool for the analysis of particle filter stability from a new viewpoint. For instance, our results apply to understanding long-term behaviour of particle filter estimates that are not expectations with respect to the empirical measures. 
Similar logarithmic `memory requirement' has been reported also in the context of particle filter variance estimation \cite{mastrototaro2023adaptive}, but it is unclear if there is a deeper connection to be explored. 

Our forgetting result for the CPF is potentially useful in understanding its behaviour over long time horizons and for analysis of smoothing estimates  \cite{cardoso2023state}. Because the CPF can be viewed as a perturbation of the particle filter (see \eqref{eq:perturbselmut} in Section \ref{sec:cpfforget}), we suspect that our analysis could be extended to more general perturbations, which vanish suitably in the number of particles $N$.
These (speculative) extensions are left for future work.

Our proof technique relies heavily on the particles being conditionally independent and identically distributed (see the proof of Theorem~\ref{thm:tvlemmanew}), which means that our result only holds for multinomial resampling. 
For instance, it is not clear how to extend the results to McKean-type models of the second type in \citep[][p. 76]{del2004feynman}, in which the $A_k^i$ in line 3 of Algorithm \ref{alg:pf} are conditionally independent and drawn from a mixture 
$$
A_k^i \sim G_k(X_k^i) \delta_i + \{1-G_k(X_k^i)\}{\rm Categ}(G_k(X_k^1),\ldots,G_k(X_k^N)),
$$
assuming $0 \leq G_k \leq 1$. On the other hand, it is very plausible that similar results would be true in this setting. For example, if the potentials $G_k$ were all the constant function $1$, then selection step does not occur and the particle filter corresponds to $N$ inhomogeneous Markov chains evolving independently. The time for each chain to forget its initial condition would be upper bounded by a geometric random variable, due to the minorisation (M) in (A1). For all $N$ chains the time to forget would be $O(\log N)$   by considering the distribution of the maximum of $N$ geometric random variables.

Finally, our results rely on a strong mixing assumption (A1), which are common in particle filter theory, but restrictive: in practice, they usually require a compact state space. 
A more general strong mixing assumption $(M)_m$ of \cite[p.~139]{del2004feynman} assumes instead that \emph{iterates} of $M_k$ satisfy a condition similar to $(M)$, which accommodates also more general proposal distributions \citep{pitt-shephard}. Appendix \ref{app:weaken} discusses how our result can be slightly improved by weakening the \emph{lower bound} assumption for the multi-step density. However, it is not clear if it is possible to prove a similar forgetting result under assumption $(M)_m$ of \cite{del2004feynman} due to the use of Lemma~\ref{lem:h2bound} to bound the squared Hellinger distance in the proof of Theorem~\ref{thm:tvlemmanew}. In order to accommodate mutation kernels that are only regular over several steps, one strategy would be to develop tools to analyse the behaviour of the particle systems over several steps. However, this would seemingly involve a very substantial departure from the techniques adopted in this paper.

It is a natural question whether the forgetting analysis could be extended to accommodate unbounded state spaces, that is, to move significantly beyond (A1).
Our results rely on quantitative stability results (Lemmas \ref{lem:fkcontract} and \ref{lem:pfstability}), which have, to our knowledge, only been proven under strong mixing assumptions. 
There have been some papers that made progress on proving stability results for the particle filter under weaker assumptions (e.g. \citep{van2009uniform,whiteley-stability,douc2014longterm}, see also the discussion in \cite{chopin-papaspiliopoulos} p. 180).
However, the results in these papers provide bounds on asymptotic quantities, in contrast with Lemmas \ref{lem:fkcontract} and \ref{lem:pfstability}, which are non-asymptotic. An alternative approach is taken in \citep{legland2004,oudjane2005stability,crisan2008stability,heine-crisan-uniform}, where the unbounded model is approximated with a model truncated to a compact set, which then satisfies (A1), and they show that sometimes the approximation error is controlled. 
Significant progress on the question of filter stability seems to be a prerequisite for substantially weakening (A1) in the setting of this paper.

\section*{Funding}
JK and MV were supported by Research Council of Finland (Finnish Centre of Excellence in Randomness and Structures, grant 346311).
AL was supported by EPSRC grant `CoSInES (COmputational Statistical INference for Engineering and Security)' (EP/R034710/1) and `ProbAI' (EP/Y028783/1).
SSS holds the Tibra Foundation professorial chair and gratefully acknowledges research funding as follows: This material is based upon work supported by the Air Force Office of Scientific Research under award number FA2386-23-1-4100. 

\bibliographystyle{abbrvnat}
\bibliography{refs}

@string{jrss-b = "J. R. Stat. Soc. Ser. B Stat. Methodol." }

@string{annstat = "Ann. Statist." }

@string{ann-appl-probab = "Ann. Appl. Probab."}

@string{j-asa = "J. Amer. Statist. Assoc." }

@string{jtp = "J. Theoret. Probab."}

@string{statist-comput = "Statist. Comput."}

@string{stoch-proc-appl = "Stochastic Process. Appl." }

@string{statistsci = "Statist. Sci."}

@string{siam-j-control = "{SIAM} J. Control Optim." }

@string{adv-appl-probab = "Adv. Appl. Probab." }

@article{heine-crisan-uniform,
  title={Uniform approximations of discrete-time filters},
  author={Heine, Kari and Crisan, Dan},
  journal=adv-appl-probab, 
  volume={40},
  number={4},
  pages={979--1001},
  year={2008}
}

@article{legland2004,
  title={Stability and uniform approximation of nonlinear filters using the Hilbert metric and application to particle filters},
  author={Le Gland, Fran{\c{c}}ois and Oudjane, Nadia},
  journal=ann-appl-probab,
  volume={14},
  number={1},
  pages={144--187},
  year={2004},
  publisher={Institute of Mathematical Statistics}
}

@article{pitt-shephard,
  author = "Michael K. Pitt and Neil Shephard",
  title = "Filtering via Simulation: Auxiliary Particle Filters",
  journal = j-asa,
  year = 1999,
  volume = 94,
  number = 446,
  pages = "590--599"
}

@inproceedings{OrG2008,
  title={Storage efficient particle filters for the out of sequence measurement problem},
  author={Orguner, Umut and Gustafsson, Fredrik},
  booktitle={2008 11th International Conference on Information Fusion},
  pages={1--8},
  year={2008},
  organization={IEEE}
}

@article{OrM05,
  title={Particle filters for tracking with out-of-sequence measurements},
  author={Orton, Matthew and Marrs, Alan},
  journal={IEEE Transactions on Aerospace and Electronic Systems},
  volume={41},
  number={2},
  pages={693--702},
  year={2005},
  publisher={IEEE}
}

@article{karjalainen-lee-singh-vihola-mixing,
author = "Joona Karjalainen and Anthony Lee and Sumeetpal S. Singh and Matti Vihola",
title = "Mixing time of the conditional backward sampling particle filter",
journal=jrss-b,
doi = {10.1093/jrsssb/qkaf078},
year = {to appear}
}

@book{shiryaev,
  author = "A. N. Shiryaev",
  title = "Probability",
  publisher = "Springer-Verlag",
  address = "New York",
  year = 1996,
  edition = "Second",
  isbn = "0-387-94549-0"
}

@article{hoeffding-wolfowitz,
 author = {Wassily Hoeffding and J. Wolfowitz},
 journal = {The Annals of Mathematical Statistics},
 number = {3},
 pages = {700--718},
 title = {Distinguishability of Sets of Distributions},
 volume = {29},
 year = {1958}
}

@article{gregory-cotter-reich,
  title={Multilevel ensemble transform particle filtering},
  author={Gregory, Alastair and Cotter, Colin J and Reich, Sebastian},
  journal={SIAM J. Sci. Comput.},
  volume={38},
  number={3},
  pages={A1317--A1338},
  year={2016}
}

@article{jasra-kamatani-law-zhou,
  title={Multilevel particle filters},
  author={Jasra, Ajay and Kamatani, Kengo and Law, Kody JH and Zhou, Yan},
  journal={SIAM J. Numer. Anal.},
  volume={55},
  number={6},
  pages={3068--3096},
  year={2017}
}

@article{jasra-yu,
  author = "Ajay Jasra and Fangyuan Yu",
  title = "Central limit theorems for coupled particle filters",
  journal = "Adv. in Appl. Probab.",
  volume = 52,
  pages = "942--1001",
  year = 2020
}

@article{sen-thiery-jasra,
	author = {Sen, Deborshee and Thiery, Alexandre H. and Jasra, Ajay},
	journal = statist-comput,
	number = {2},
	pages = {461--475},
	title = {On coupling particle filter trajectories},
	volume = {28},
	year = {2018}
}

@article{lee-whiteley,
  title={Variance estimation in the particle filter},
  author={Lee, Anthony and Whiteley, Nick},
  journal={Biometrika},
  volume={105},
  number={3},
  pages={609--625},
  year={2018}
}

@article{chan-lai,
  title={A general theory of particle filters in hidden {M}arkov models and some applications},
  author={Chan, Hock Peng and Lai, Tze Leung},
  journal = annstat,
  volume = 41, 
  number = 6,
  pages = "2877--2904",
  year={2013}
}

@article{chopin-singh,
title={On particle {G}ibbs sampling},
author={Chopin, Nicolas and Singh, Sumeetpal S},
journal={Bernoulli},
volume={21},
number={3},
pages={1855--1883},
year={2015}
}

@article{chopin,
author = {Nicolas Chopin},
title = {{Central limit theorem for sequential Monte Carlo methods and its application to Bayesian inference}},
volume = {32},
journal = annstat,
number = {6},
publisher = {Institute of Mathematical Statistics},
pages = {2385 -- 2411},
keywords = {Markov chain Monte Carlo, particle filter, recursive Monte Carlo filter, resample-move algorithms, residual resampling, state-space model},
year = {2004}
}

@book{chopin-papaspiliopoulos,
  title={An introduction to sequential {M}onte {C}arlo},
  author={Chopin, Nicolas and Papaspiliopoulos, Omiros},
  volume={4},
  year={2020},
  series={Springer Series in Statistics},
  publisher={Springer}
}

@book{cappe-moulines-ryden,
  title={Inference in Hidden {M}arkov Models},
  author={Capp{\'e}, Olivier and Moulines, Eric and Ryd{\'e}n, Tobias},
  series={Springer Series in Statistics},
  year={2005},
  publisher={Springer}
}

@article{cerou-delmoral-guyader,
  title={A nonasymptotic theorem for unnormalized {F}eynman--{K}ac particle models},
  author={C{\'e}rou, Fr{\'e}d{\'e}ric and Del Moral, Pierre and Guyader, Arnaud},
  journal={Ann. Inst. Henri Poincaré Probab. Stat.},
  volume={47},
  number={3},
  pages={629--649},
  year={2011}
}

@article{delmoral-doucet-peters,
author = {Del Moral, P. and Doucet, A. and Peters, G. W.},
title = {Sharp Propagation of Chaos Estimates for {F}eynman--{K}ac Particle Models},
journal = {Theory Probab. Appl.},
volume = {51},
number = {3},
pages = {459-485},
year = {2007}
}

@article{delmoral-guionnet,
author = {Del Moral, P. and Guionnet, A.},
title = {{Central limit theorem for nonlinear filtering and interacting particle systems}},
volume = {9},
journal = ann-appl-probab,
number = {2},
publisher = {Institute of Mathematical Statistics},
pages = {275 -- 297},
keywords = {Central limit, Filtering, Interacting random processes, stochastic approximation},
year = {1999},
}

@article{delmoral-guionnet-98,
title = {Large deviations for interacting particle systems: Applications to non-linear filtering},
journal = stoch-proc-appl,
volume = {78},
number = {1},
pages = {69-95},
year = {1998},
author = {Del Moral, P. and Guionnet, A.}
}

@article{delmoral-patras-rubenthaler,
  title={Convergence of {U}-statistics for interacting particle systems},
  author={Del Moral, Pierre and Patras, Fr{\'e}d{\'e}ric and Rubenthaler, Sylvain},
  journal=jtp,
  volume={24},
  pages={1002--1027},
  year={2011}
}

@article{delmoral-doucet-singh, 
title={A backward particle interpretation of {F}eynman--{K}ac formulae}, 
volume={44}, 
number={5}, 
journal={ESAIM Math. Model. Numer. Anal.}, 
publisher={EDP Sciences}, 
author={Del Moral, Pierre and Doucet, Arnaud and Singh, Sumeetpal S.}, 
year={2010}, pages={947–975}
}

@article{douc-etal-2011,
 author = {Randal Douc and Aurélien Garivier and Eric Moulines and Jimmy Olsson},
 journal = ann-appl-probab,
 number = {6},
 pages = {2109--2145},
 publisher = {Institute of Mathematical Statistics},
 title = {Sequential {M}onte {C}arlo smoothing for general state space hidden {M}arkov models},
 urldate = {2023-08-14},
 volume = {21},
 year = {2011}
}

@article{douc-moulines,
author = {Randal Douc and Eric Moulines},
title = {{Limit theorems for weighted samples with applications to sequential Monte Carlo methods}},
volume = {36},
journal = annstat,
number = {5},
publisher = {Institute of Mathematical Statistics},
pages = {2344 -- 2376},
keywords = {branching, conditional central limit theorems, particle filtering, sequential importance sampling, sequential Monte Carlo},
year = {2008}
}

@article{kantas-etal,
author = {Nikolas Kantas and Arnaud Doucet and Sumeetpal S. Singh and Jan Maciejowski and Nicolas Chopin},
title = {On Particle Methods for Parameter Estimation in State-Space Models},
volume = {30},
journal = statistsci,
number = {3},
publisher = {Institute of Mathematical Statistics},
pages = {328--351},
keywords = {Bayesian inference, maximum likelihood inference, particle filtering, sequential Monte Carlo, state-space models},
year = {2015}
}

@article{poyiadjis-etal,
 author = {Goerge Poyiadjis and Arnaud Doucety and Sumeetpal S. Singh},
 journal = {Biometrika},
 number = {1},
 pages = {65--80},
 publisher = {[Oxford University Press, Biometrika Trust]},
 title = {Particle approximations of the score and observed information matrix in state space models with application to parameter estimation},
 urldate = {2023-08-13},
 volume = {98},
 year = {2011}
}

@article{schon-etal,
title = {System identification of nonlinear state-space models},
journal = {Automatica},
volume = {47},
number = {1},
pages = {39-49},
year = {2011},
author = {Thomas B. Schön and Adrian Wills and Brett Ninness},
}

@article{whiteley-stability,
  title={Stability properties of some particle filters},
  author={Whiteley, Nick},
  journal=ann-appl-probab,
  volume={23},
  number={6},
  pages={2500--2537},
  year={2013}
}

@inproceedings{cardoso2023state,
title = 	 {State and parameter learning with {PARIS} particle {G}ibbs},
  author =       {Cardoso, Gabriel and Janati El Idrissi, Yazid and Le Corff, Sylvain and Moulines, Eric and Olsson, Jimmy},
  booktitle = 	 {Proceedings of the 40th International Conference on Machine Learning},
  pages = 	 {3625--3675},
  year = 	 {2023},
}

@article{tadic-doucet-recursive,
  title={Asymptotic properties of recursive particle maximum likelihood estimation},
  author={Tadi{\'c}, Vladislav ZB and Doucet, Arnaud},
  journal={IEEE Trans. Inform. Theory},
  volume={67},
  number={3},
  pages={1825--1848},
  year={2021}
}

@article{del2015uniform,
  title={Uniform stability of a particle approximation of the optimal filter derivative},
  author={Del Moral, Pierre and Doucet, Arnaud and Singh, Sumeetpal S},
  journal=siam-j-control,
  volume={53},
  number={3},
  pages={1278--1304},
  year={2015},
  publisher={SIAM}
}

@book{thorissoncoupling,
  title={Coupling, Stationarity, and Regeneration},
  author={Thorisson, Herman},
  year={2000},
  publisher={Springer}
}

@book{del2004feynman,
  title={Feynman-Kac formulae: Genealogical and Interacting Particle Systems with Applications},
  author={Del Moral, Pierre},
  volume={88},
  year={2004},
  publisher={Springer}
}

@article{gordon-salmond-smith,
  author = "Neil J. Gordon and D. J. Salmond and A. F. M. Smith",
  title = "Novel approach to nonlinear/non-{G}aussian {B}ayesian state estimation",
  journal = "IEE Proceedings-F",
  volume = "140",
  number = "2",
  pages = "107--113",
  year = "1993"
  }

@article{kuhlenschmidt-singh,
  title={Stability of conditional sequential {M}onte {C}arlo},
  author={Kuhlenschmidt, Bernd and Singh, Sumeetpal S},
  journal={arXiv:1806.06520},
  year={2018}
}

@article{olsson-douc,
author = {Jimmy Olsson and Randal Douc},
title = {{Numerically stable online estimation of variance in particle filters}},
volume = {25},
journal = {Bernoulli},
number = {2},
publisher = {Bernoulli Society for Mathematical Statistics and Probability},
pages = {1504--1535},
keywords = {asymptotic variance, Feynman–Kac models, Hidden Markov models, Particle filters, Sequential Monte Carlo methods, state-space models, variance estimation},
year = {2019}
}

@article{oudjane2005stability,
  title={Stability and uniform particle approximation of nonlinear filters in case of non ergodic signals},
  author={Oudjane, Nadia and Rubenthaler, Sylvain},
  journal={Stochastic Analysis and applications},
  volume={23},
  number={3},
  pages={421--448},
  year={2005}
}

@article{lee-singh-vihola,
  author = "Anthony Lee and Sumeetpal S. Singh and Matti Vihola",
  title = "Coupled conditional backward sampling particle filter",
  journal = annstat,
  year = 2020,
  volume = 48,
  number = 5,
  pages = "3066--3089"
}

@article{jacob-lindsten-schon,
  title={Smoothing with couplings of conditional particle filters},
author={Jacob, Pierre E and Lindsten, Fredrik and Sch{\"o}n,
Thomas B},
journal = j-asa,
year = 2020,
volume = 115,
number = 530,
pages = "721--729"
}

@article{andrieu-doucet-holenstein,
  author = "Christophe Andrieu and Arnaud Doucet and Roman Holenstein",
  title = "Particle {M}arkov chain {M}onte {C}arlo methods",
  journal = jrss-b,
  year = 2010,
  volume = 72,
  number = 3,
  pages = "269--342"
}

@book{douc2018markov,
  title={Markov chains},
  author={Douc, Randal and Moulines, Eric and Priouret, Pierre and Soulier, Philippe},
  year={2018},
  publisher={Springer}
}

@article{mastrototaro2023adaptive,
  title={Adaptive online variance estimation in particle filters: the {ALVar} estimator},
  author={Mastrototaro, Alessandro and Olsson, Jimmy},
  journal=statist-comput,
  volume={33},
  number={4},
  pages={1--26},
  year={2023},
  publisher={Springer}
}

@book{wainwright2019high,
  title={High-dimensional statistics: A non-asymptotic viewpoint},
  author={Wainwright, Martin J},
  volume={48},
  year={2019},
  publisher={Cambridge university press}
}

@book{lindvall2002lectures,
  title={Lectures on the coupling method},
  author={Lindvall, Torgny},
  year={1992},
  publisher={Wiley},
  note={Reprint: Dover paperback edition, 2002}
}

@article{godsill-doucet-west,
  title={{M}onte {C}arlo smoothing for nonlinear time series},
  author={Godsill, Simon J and Doucet, Arnaud and West, Mike},
  journal={Journal of the American Statistical Association},
  volume={99},
  number={465},
  pages={156--168},
  year={2004}
}

@article{douc-garivier-moulines-olsson,
  title={Sequential {M}onte {C}arlo smoothing for general state space hidden {M}arkov models},
  author={Douc, Randal and Garivier, Aur{\'e}lien and Moulines, Eric and Olsson, Jimmy},
  journal={The Annals of Applied Probability},
  year={2011},
  volume = 21,
  number = 6,
  pages = {2109--2145}
}

@article{dau-chopin,
  title={On backward smoothing algorithms},
  author={Dau, Hai-Dang and Chopin, Nicolas},
  journal={The Annals of Statistics},
  volume={51},
  number={5},
  pages={2145--2169},
  year={2023}
}

@article{van2009uniform,
  title={Uniform time average consistency of Monte Carlo particle filters},
  author={Van Handel, Ramon},
  journal={Stochastic Processes and their Applications},
  volume={119},
  number={11},
  pages={3835--3861},
  year={2009},
  publisher={Elsevier}
}

@article{crisan2008stability,
  title={Stability of the discrete time filter in terms of the tails of noise distributions},
  author={Crisan, Dan and Heine, Kari},
  journal={Journal of the London Mathematical Society},
  volume={78},
  number={2},
  pages={441--458},
  year={2008}
}

@article{douc2014longterm,
author = {Randal Douc and Eric Moulines and Jimmy Olsson},
title = {Long-term stability of sequential Monte Carlo methods under verifiable conditions},
volume = {24},
journal = {The Annals of Applied Probability},
number = {5},
pages = {1767 -- 1802},
year = {2014}
}

\newpage

\appendix

\section{Technical lemmas}
\label{app:technical}

\begin{lemma}\label{lem:tvbound}
For any probability measures $\mu$ and $\nu$ on $E$ it holds that
\[
\| \mu-\nu \|_{\mathrm{TV}}^2 \leq 1- (1-H^2(\mu, \nu))^2.
\]
\end{lemma}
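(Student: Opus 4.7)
The plan is to express both sides in terms of densities $p,q$ of $\mu,\nu$ with respect to a common dominating measure $\lambda$, exploit the algebraic identity $p-q = (\sqrt{p}-\sqrt{q})(\sqrt{p}+\sqrt{q})$, and then apply the Cauchy--Schwarz inequality. Concretely, I would first write
\[
\| \mu-\nu \|_{\mathrm{TV}} = \tfrac{1}{2}\int |p-q|\,\mathrm{d}\lambda = \tfrac{1}{2}\int \bigl|\sqrt{p}-\sqrt{q}\bigr|\cdot\bigl(\sqrt{p}+\sqrt{q}\bigr)\,\mathrm{d}\lambda,
\]
which is valid because $p,q\ge 0$.

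Next, I would apply Cauchy--Schwarz to this factorisation to obtain
\[
\| \mu-\nu \|_{\mathrm{TV}}^2 \le \tfrac{1}{4}\int \bigl(\sqrt{p}-\sqrt{q}\bigr)^2\,\mathrm{d}\lambda \cdot \int \bigl(\sqrt{p}+\sqrt{q}\bigr)^2\,\mathrm{d}\lambda.
\]
Expanding the two integrals and using $\int p\,\mathrm{d}\lambda = \int q\,\mathrm{d}\lambda = 1$ together with the definition $H^2(\mu,\nu) = 1 - \int\sqrt{pq}\,\mathrm{d}\lambda$, the first integral equals $2H^2(\mu,\nu)$ and the second equals $2 + 2\int\sqrt{pq}\,\mathrm{d}\lambda = 4 - 2H^2(\mu,\nu)$.

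Substituting back gives
\[
\| \mu-\nu \|_{\mathrm{TV}}^2 \le \tfrac{1}{4}\cdot 2H^2(\mu,\nu)\cdot\bigl(4 - 2H^2(\mu,\nu)\bigr) = H^2(\mu,\nu)\bigl(2-H^2(\mu,\nu)\bigr),
\]
and the right-hand side is precisely $1 - (1-H^2(\mu,\nu))^2$, completing the proof. There is no genuine obstacle here: the only thing to be careful about is that Cauchy--Schwarz is applied with $|\sqrt{p}-\sqrt{q}|$ and $\sqrt{p}+\sqrt{q}$ as the two factors (so that both resulting integrals are recognisable), and not with some less convenient split.
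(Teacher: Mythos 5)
Your proof is correct. The paper's own proof simply invokes Le Cam's inequality (citing Wainwright) in the form $\| \mu-\nu \|_{\mathrm{TV}} \le H(\mu,\nu)\sqrt{2-H^2(\mu,\nu)}$ and squares it, whereas you re-derive that inequality from scratch: the factorisation $|p-q|=|\sqrt{p}-\sqrt{q}|(\sqrt{p}+\sqrt{q})$ followed by Cauchy--Schwarz is precisely the standard proof of Le Cam's bound, and your evaluation of the two integrals ($2H^2$ and $4-2H^2$, using $\int p\,\mathrm{d}\lambda=\int q\,\mathrm{d}\lambda=1$ and the paper's convention $H^2=1-\int\sqrt{pq}\,\mathrm{d}\lambda$) is exactly right, yielding $H^2(2-H^2)=1-(1-H^2)^2$. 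So the two arguments are mathematically the same; yours buys self-containedness at the cost of a few extra lines, while the paper's buys brevity by outsourcing the Cauchy--Schwarz step to a cited reference. The only implicit point worth making explicit is that a common dominating measure always exists (take $\lambda=\mu+\nu$), which the paper's definition of $H^2$ already presumes.
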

\begin{proof}
Denote by $H^2(\mu \, \| \,\nu) := 2 H^2(\mu, \nu)$ the (unnormalised) Hellinger divergence. Le Cam's inequality (e.g., \cite{wainwright2019high}) states that
\[
\| \mu-\nu \|_{\mathrm{TV}} \,\leq\, H(\mu \, \| \,\nu) \sqrt{1 - \frac{H^2(\mu \, \| \,\nu)}{4}} \,=\, H(\mu, \nu) \sqrt{2 - H^2(\mu,\nu)},
\]
and the claim follows by squaring both sides and rearranging the terms.
\end{proof}

We note that the following result is similar to Lemma 3.5 in \cite{legland2004}, which gives a relation between expected total variation distances and $L^1$-norms.

\begin{lemma}
\label{lem:h2bound}
Let $\mu$ and $\nu$ be two random probability measures. If \textup{(A1)} holds for a Markov kernel $M$, then
\[
\E H^2(\mu M, \nu M) \leq c'  \sup_{\osc(\phi) \leq 1} \E (| \mu(\phi) - \nu(\phi) |^2),
\]
where $c' = \frac{1}{8}(\bar M / \underline M)^2$.
\end{lemma}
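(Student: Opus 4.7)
The plan is to pass from the squared Hellinger distance to an $L^2$ distance between the densities of $\mu M$ and $\nu M$, and then to express the pointwise differences as test-function evaluations. Write $p(y) := \int \mu(\ud x) M(x,y)$ and $q(y) := \int \nu(\ud x) M(x,y)$ for the densities with respect to $\lambda$. The elementary identity $(\sqrt{a}-\sqrt{b})^2 = (a-b)^2/(\sqrt{a}+\sqrt{b})^2$ combined with the pointwise lower bound $p(y), q(y) \geq \underline{M}$ (which follows from $M(\,\cdot\,,y) \geq \underline{M}$ and the fact that $\mu$ and $\nu$ are probability measures) yields
\[
H^2(\mu M, \nu M) \leq \frac{1}{8\underline{M}} \int (p(y)-q(y))^2 \lambda(\ud y).
\]

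Next I would note that $p(y) - q(y) = (\mu - \nu)(M(\,\cdot\,,y))$, and the normalised function $\phi_y := M(\,\cdot\,,y)/\bar{M}$ satisfies $\osc(\phi_y) \leq (\bar{M}-\underline{M})/\bar{M} \leq 1$ by (A1). Hence for each fixed $y$, $|p(y)-q(y)|^2 = \bar{M}^2 |(\mu-\nu)(\phi_y)|^2$, and since $\phi_y$ is an admissible test function,
\[
\E |p(y)-q(y)|^2 \leq \bar{M}^2 \sup_{\osc(\phi) \leq 1} \E|\mu(\phi)-\nu(\phi)|^2,
\]
where the right-hand side does not depend on $y$.

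Finally, integrating the Markov kernel constraint $1 = \int M(x,y)\lambda(\ud y) \geq \underline{M}\lambda(E)$ gives the crucial bound $\lambda(E) \leq 1/\underline{M}$. Using Tonelli's theorem to interchange $\E$ with the $\lambda$-integral and assembling the three preceding displays then yields
\[
\E H^2(\mu M, \nu M) \leq \frac{\bar{M}^2 \lambda(E)}{8\underline{M}} \sup_{\osc(\phi) \leq 1} \E|\mu(\phi)-\nu(\phi)|^2 \leq \frac{1}{8}\Big(\frac{\bar{M}}{\underline{M}}\Big)^2 \sup_{\osc(\phi) \leq 1} \E|\mu(\phi)-\nu(\phi)|^2,
\]
which is the claim with $c' = (\bar{M}/\underline{M})^2/8$. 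There is no serious obstacle in this argument; the only slight subtlety is the use of $\lambda(E) < \infty$ with an explicit bound, which is immediate from (A1).
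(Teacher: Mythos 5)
Your proof is correct and follows essentially the same route as the paper: both lower-bound the densities $\mu M,\nu M$ by $\underline{M}$ to convert the squared Hellinger distance into an $L^2$ distance of densities (your algebraic identity $(\sqrt a-\sqrt b)^2=(a-b)^2/(\sqrt a+\sqrt b)^2$ is just the Lipschitz bound on $\sqrt{\cdot}$ used in the paper), then view $x\mapsto M(x,y)$ as a test function of oscillation at most $\bar M$, and finish with $\lambda(E)\le 1/\underline{M}$. The constants match exactly, so there is nothing to add.
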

\begin{proof}
For Markov kernel $M$ with density w.r.t. $\lambda$, $2\mathbb{E}H^{2}(\mu M,\nu M)$ equals
\begin{align*}
\int\mathbb{E}\big( | \sqrt{\mu M(x)}-\sqrt{\nu M(x)} |^{2} \big) \lambda({\rm d}x) &\le\int\mathbb{E}\big(L^2|\mu M(x)-\nu M(x)|^{2}\big) \lambda({\rm d}x)\\
 & \leq L^{2} \int \sup_{\osc(\phi) \leq 1} \mathbb{E} ( \left| \mu(\phi) -\nu(\phi) \right| ^{2} ) \osc(M(\uarg,x))^2 \lambda({\rm d}x)\\
& \leq L^{2} \bar{M}^2  \int  \sup_{\osc(\phi) \leq 1} \E (| \mu(\phi) - \nu(\phi) |^2) \lambda({\rm d}x)\\
&= L^{2} \bar{M}^2 \lambda(E)  \sup_{\osc(\phi) \leq 1} \E (| \mu(\phi) - \nu(\phi) |^2),
\end{align*} 
where the first inequality follows from the lower bound of $M(x,\uarg)\ge\underline{M}>0,$
and where $L=1/\sqrt{ 4\underline M}$ is the Lipschitz constant of the function $x\mapsto\sqrt{x}$
restricted to values $x\ge\underline{M}$.
 For the last part, we have the estimate
\[
1 = M(y,E) = \int M(y,x) \lambda({\rm d}x) \geq \int \underline M \lambda({\rm d}x) = \underline M \lambda(E),
\]
hence $\lambda(E) \leq \underline M^{-1}$, and the result follows with
$c' =  L^{2} \bar{M}^2 \underline M^{-1}/2 =\bar M^2/\underline M^2/8$. \qedhere
\end{proof}

\begin{remark}
As implied by the proof, the constant in Lemma \ref{lem:h2bound} can be (substantially) improved, when $\osc(M(\uarg, x))$ is (much) smaller than $\bar M$. 
\end{remark}

\begin{lemma}
\label{lem:derivative}
Let $b > 1$, and define $f(N) = (1-(1-b/N)^{2N})^{1/2}$. Then, $f'(N)\leq 0$ for $N> b$.
\end{lemma}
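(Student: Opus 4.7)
The plan is to reduce the claim to a one-variable inequality and verify it by standard calculus. Since $t\mapsto \sqrt{t}$ is increasing, it suffices to show that $g(N) := 1 - (1-b/N)^{2N}$ is nonincreasing in $N$, which is equivalent to showing that $h(N) := (1 - b/N)^{2N}$ is nondecreasing in $N$ on $(b,\infty)$. Taking logarithms, this in turn reduces to showing that
\[
p(N) := 2N \log(1 - b/N)
\]
has $p'(N) \geq 0$ for $N > b$.

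Differentiating gives
\[
p'(N) = 2 \log(1 - b/N) + \frac{2b/N}{1 - b/N}.
\]
Substituting $u := b/N \in (0,1)$ (which is valid precisely when $N > b$), this becomes $p'(N) = 2\psi(u)$, where $\psi(u) := \log(1-u) + u/(1-u)$. The remaining step is to verify that $\psi(u) \geq 0$ on $[0,1)$. Since $\psi(0) = 0$, it is enough to check $\psi'(u) \geq 0$, and a short computation yields
\[
\psi'(u) = -\frac{1}{1-u} + \frac{1}{(1-u)^2} = \frac{u}{(1-u)^2} \geq 0,
\]
which completes the proof. There is no substantial obstacle: the only care needed is to ensure $u = b/N$ lies in $(0,1)$, which is exactly the hypothesis $N > b$.
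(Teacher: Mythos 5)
Your proposal is correct and follows essentially the same route as the paper: both reduce the claim to the monotonicity of $N\mapsto(1-b/N)^N$ (equivalently its square) via the outer map $t\mapsto(1-t)^{1/2}$ or $t\mapsto\sqrt{t}$, and both come down to the inequality $\log(1-u)+u/(1-u)\ge 0$ for $u\in[0,1)$. The only difference is that the paper quotes this as the known inequality $x/(1+x)\le\log(1+x)$ with $x=-b/N$, whereas you derive it by differentiating $\psi$; both are fine.
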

\begin{proof}
Define $g(\varepsilon) := (1-\varepsilon^2)^{1/2}$ and $h_b(N) := (1-b/N)^N$.
We observe that $g'(\varepsilon) < 0$ for $\varepsilon \in (0,1)$.
 Moreover, $h_b'(N) \geq 0$ for $N > b$, which follows from
\[
h_b'(N) = (1-b/N)^N \Big( \frac{b}{(1-b/N)N} + \log(1-b/N) \Big)
\]
and the inequality $x/(1+x) \leq \log (1+x)$, $\forall x>-1$, with $x=-b/N$. Since $f(N) = g(h_b(N))$, the result follows.
\end{proof}

\begin{lemma}
  Assume \textup{(A1)}. Then for all $n\geq0$, $N\geq1$, 
  \[
  \mathbb{E}\left[\left\Vert \pi_{n}^{N}M_{n+1}-\pi_{n}M_{n+1}\right\Vert _{{\rm TV}}\right]\leq\frac{c(1)}{2}\cdot\frac{\bar{G}}{\underline{G}}\cdot\frac{\bar{M}}{\underline{M}}\cdot\frac{1}{N^{1/2}},
  \]
where $\pi_n^N = \Psi_n(\eta_n^N)$, $\pi_n = \Psi_n(\eta_n)$, and $c(1)$ is given in Lemma~\ref{lem:pfstability}.
  \label{lem:TV-rootN-upper}
  \end{lemma}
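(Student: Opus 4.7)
The plan is to express the total variation distance between $\pi_n^N M_{n+1}$ and $\pi_n M_{n+1}$ as an $L^1$-integral against the dominating measure $\lambda$ from (A1), and then to control each resulting integrand by the time-uniform $L^1$ bound of Lemma \ref{lem:pfstability}, applied with an appropriately normalised test function. Since (A1) ensures that both $\pi_n^N M_{n+1}$ and $\pi_n M_{n+1}$ admit densities with respect to $\lambda$, I would first write
\[
\|\pi_n^N M_{n+1} - \pi_n M_{n+1}\|_{\rm TV}
= \frac{1}{2}\int_E \bigl|\pi_n^N(M_{n+1}(\uarg,x)) - \pi_n(M_{n+1}(\uarg,x))\bigr|\,\lambda({\rm d}x).
\]

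The key observation is then pointwise: for each fixed $x \in E$, since $\underline M \leq M_{n+1}(\uarg,x) \leq \bar M$, the rescaled function $\phi_x := (M_{n+1}(\uarg,x) - \underline M)/(\bar M - \underline M)$ takes values in $[0,1]$ and hence satisfies $\osc(\phi_x) \leq 1$. Because $\pi_n^N$ and $\pi_n$ are probability measures, the additive constant $\underline M$ cancels and the factor $\bar M - \underline M$ pulls out, giving
\[
\bigl|\pi_n^N(M_{n+1}(\uarg,x)) - \pi_n(M_{n+1}(\uarg,x))\bigr| = (\bar M - \underline M)\,\bigl|(\pi_n^N - \pi_n)(\phi_x)\bigr|.
\]
This affine rescaling of $y \mapsto M_{n+1}(y,x)$ to an oscillation-one test function is essentially the same device used in Lemma \ref{lem:h2bound} to bound the Hellinger distance.

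I would then take expectations and apply Fubini. For each fixed $x$, the combination of Lemma \ref{lem:pfstability} with $p=1$ and Lemma \ref{lem:bgtransformlp} applied to $\phi_x$ yields
\[
\E\bigl|(\pi_n^N - \pi_n)(\phi_x)\bigr| \leq \frac{c(1)\,\bar G/\underline G}{\sqrt N}.
\]
Together with $\lambda(E) \leq 1/\underline M$ (which follows from $1 = M_{n+1}(y, E) \geq \underline M\,\lambda(E)$, as already used in the proof of Lemma \ref{lem:h2bound}) and the elementary inequality $\bar M - \underline M \leq \bar M$, one obtains
\[
\E\|\pi_n^N M_{n+1} - \pi_n M_{n+1}\|_{\rm TV} \leq \frac{\bar M - \underline M}{2\underline M} \cdot \frac{\bar G}{\underline G} \cdot \frac{c(1)}{\sqrt N} \leq \frac{c(1)}{2} \cdot \frac{\bar G}{\underline G} \cdot \frac{\bar M}{\underline M} \cdot \frac{1}{\sqrt N}.
\]

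I do not anticipate any substantial obstacle: Lemmas \ref{lem:pfstability} and \ref{lem:bgtransformlp} do the bulk of the work, and the main conceptual step is simply to reduce the density-level comparison to a test-function-level comparison by the pointwise rescaling above. The constant is certainly not optimal, but the form of the bound is exactly what is needed in the remark following the proof of Theorem \ref{thm:pfforget}.
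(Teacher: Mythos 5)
Your proposal is correct and follows essentially the same route as the paper's proof: write the total variation as an $L^1$ integral of the density difference, apply Fubini, view the integrand at each $x$ as $(\pi_n^N-\pi_n)(f_x)$ with $f_x = M_{n+1}(\uarg,x)$ of oscillation at most $\bar M$, and invoke Lemmas \ref{lem:bgtransformlp} and \ref{lem:pfstability} together with $\lambda(E)\le \underline M^{-1}$. Your explicit affine rescaling to $\phi_x$ with values in $[0,1]$ is just a slightly more careful version of the paper's direct oscillation bound $\osc(f_x)\le\bar M$, yielding the same constant after $\bar M - \underline M \le \bar M$.
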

  
  \begin{proof}
  Since $\| \cdot \|_{\mathrm{TV}}$ equals 1/2 times the $L^1$ norm for the densities, we apply Fubini and write
  \begin{align*}
  \mathbb{E}\left[\left\Vert \pi_{n}^{N}M_{n+1}-\pi_{n}M_{n+1}\right\Vert _{{\rm TV}}\right] 
   & \;=\; \frac{1}{2}\int\left\Vert \pi_{n}^{N}M_{n+1}(x)-\pi_{n}M_{n+1}(x)\right\Vert _{1}\lambda({\rm d}x)\\
   & \;=\; \frac{1}{2}\int\left\Vert \pi_{n}^{N}(f_{x})-\pi_{n}(f_{x})\right\Vert _{1}\lambda({\rm d}x)\\
   & \;\leq\; \frac{1}{2}\cdot\frac{\bar{G}}{\underline{G}}\cdot\int\frac{\bar{M}c(1)}{N^{1/2}}\lambda({\rm d}x)
    \;=\; \frac{\lambda(E)}{2}\cdot\frac{\bar{G}}{\underline{G}}\frac{\bar{M}c(1)}{N^{1/2}},
  \end{align*}
  where $f_{x}=z\mapsto M_{n+1}(z,x)$ satisfies ${\rm osc}(f_{x})\leq\bar{M}$
  and we have used Lemma~\ref{lem:pfstability} and Lemma~\ref{lem:bgtransformlp}. Using the estimate $\lambda(E)\leq\underline{M}^{-1}$ as in Lemma~\ref{lem:h2bound},
  we may conclude.
  \end{proof}
  We can complement this general upper bound with a lower bound, which
  for simplicity we state only for simple empirical measures as it is
  sufficient to make the point. In particular, in the setting of the
  following Lemma, we can deduce that the true rate is $O(N^{-1/2})$.
\begin{lemma}
  \label{lem:independent-tv}
Let $\mu$ be a probability measure on $E$, and let $M$ be a transition probability density on $E$ with respect to a measure $\lambda$, such that
\begin{enumerate}
\item[(i)] $\bar{M} := \sup_{x,y} M(x,y) <\infty$,
\item[(ii)] $0 < \int \mathrm{var}_\mu\big( M(\uarg, x)\big) \lambda(\ud x)$.
\end{enumerate}
There exist constants $0 < \underline{c} \le \bar{c} < \infty$ such that
for the empirical measure $\mu^N \!=\! \frac{1}{N} \sum_{k=1}^N \delta_{X^k}$ based on independent $X^1,\ldots,X^N \sim \mu$:
$$
\frac{\underline{c}}{\sqrt{N}} \le 
\E \big[ \| \mu^{N}M-\mu M \|_{{\rm TV}}\big]
\le \frac{\bar{c}}{\sqrt{N}}.
$$
\end{lemma}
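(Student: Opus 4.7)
The plan is to reduce the claim to controlling, for each $x\in E$, the first absolute moment of the centered empirical average $\bar Z_N(x) := \frac{1}{N}\sum_{k=1}^N Z_k(x)$, where $Z_k(x) := M(X^k,x) - (\mu M)(x)$ are i.i.d.\ in $k$, mean zero, with variance $\sigma^2(x) := \mathrm{var}_\mu(M(\uarg,x))$, and satisfy $|Z_k(x)|\le \bar M$. Since $\mu^N M(x) - \mu M(x) = \bar Z_N(x)$, Fubini yields
$$
  \E \| \mu^N M - \mu M \|_{\mathrm{TV}} = \tfrac{1}{2}\int \E |\bar Z_N(x)| \, \lambda(\ud x),
$$
and the task reduces to sandwiching $\E|\bar Z_N(x)|$ between appropriate pointwise multiples of $N^{-1/2}$, uniformly in $x$.

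For the upper bound, $L^1\le L^2$ gives $\E|\bar Z_N(x)|\le \sigma(x)/\sqrt N$. Combining $\sigma^2(x)\le \bar M(\mu M)(x)$ with $\int \mu M\,\ud\lambda = 1$ and Cauchy--Schwarz then yields $\int \sigma(x)\,\lambda(\ud x) \le \sqrt{\bar M \lambda(E)}$, which is finite in the contexts where the lemma is invoked (in particular when (A1) is in force, whence $\lambda(E)\le \underline M^{-1}$ by the computation already used in Lemma~\ref{lem:h2bound}), giving $\bar c \le \tfrac12\sqrt{\bar M\lambda(E)}$.

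The harder direction, and the main obstacle, is the lower bound: although the central limit theorem suggests $\E|\bar Z_N(x)|\sim \sqrt{2/\pi}\,\sigma(x)/\sqrt N$, a non-asymptotic inequality valid for every $N$ is required. The tool is H\"older's inequality with exponents $3/2$ and $3$, which gives $\E W^2 \le (\E|W|)^{2/3}(\E W^4)^{1/3}$ and hence the reverse Jensen inequality
$$
  \E|W| \ge \frac{(\E W^2)^{3/2}}{(\E W^4)^{1/2}}.
$$
Applied to $W=\bar Z_N(x)$, this uses $\E\bar Z_N(x)^2 = \sigma^2(x)/N$ together with the standard i.i.d.\ fourth-moment expansion $\E(\sum_k Z_k(x))^4 = N\E Z_1^4(x) + 3N(N-1)\sigma^4(x)$; via the bounds $\E Z_1^4(x)\le \bar M^2\sigma^2(x)$ and $\sigma^2(x)\le\bar M^2$ (both immediate from $|Z_k(x)|\le \bar M$) this yields $\E\bar Z_N(x)^4 \le 4\bar M^2\sigma^2(x)/N^2$. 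Substitution gives the pointwise estimate $\E|\bar Z_N(x)|\ge \sigma^2(x)/(2\bar M\sqrt N)$.

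Integrating this pointwise lower bound over $x$ delivers
$$
  \E\|\mu^N M - \mu M\|_{\mathrm{TV}} \ge \frac{1}{4\bar M\sqrt N}\int \mathrm{var}_\mu(M(\uarg,x))\,\lambda(\ud x),
$$
which is strictly positive by assumption (ii), so the lower bound holds with $\underline c := (4\bar M)^{-1}\int \mathrm{var}_\mu(M(\uarg,x))\,\lambda(\ud x)$. The only delicate step is the reverse Jensen inequality: coupled with the boundedness $|Z_k(x)|\le \bar M$ (which keeps the fourth moment of the same order as the square of the second), it converts the pointwise variance of order $N^{-1}$ into a first-absolute-moment lower bound of the correct order $N^{-1/2}$ for every $N\ge 1$.
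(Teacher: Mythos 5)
Your proof is correct and follows the same skeleton as the paper's: reduce via Fubini to the pointwise quantity $\E|\mu^N(f_x)-\mu(f_x)|$ with $f_x = M(\uarg,x)$, get the upper bound from $L^1\le L^2$, and get the lower bound from the reverse H\"older inequality $\E|W|\ge (\E W^2)^{3/2}/(\E W^4)^{1/2}$. The one substantive difference is in how the fourth moment is controlled: the paper invokes the Marcinkiewicz--Zygmund inequality, which introduces an unspecified universal constant $B_4$ and leads to a lower bound proportional to $\int \mathrm{var}_\mu(f_x)^{3/2}\,\lambda(\ud x)$, whereas you compute the i.i.d.\ fourth moment exactly and bound it by $4\bar M^2\sigma^2(x)/N^2$ using $|Z_k(x)|\le\bar M$, which yields the fully explicit constant $\underline c=(4\bar M)^{-1}\int\mathrm{var}_\mu(M(\uarg,x))\,\lambda(\ud x)$ and connects to hypothesis (ii) verbatim. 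Your explicit remark that finiteness of $\bar c$ requires $\int \sigma(x)\,\lambda(\ud x)<\infty$ (guaranteed, e.g., when $\lambda(E)<\infty$, as under (A1)) is a point the paper's proof glosses over, and is worth retaining.
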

\begin{proof}
We may write
\begin{align*}
  \E\left[\left\Vert \mu^{N}M-\mu M\right\Vert _{{\rm TV}}\right] 
    \,=\,\frac{1}{2}\int\mathbb{E}\left[\left|\mu^{N}M(x)-\mu M(x)\right|\right]\lambda({\rm d}x)
   & \,=\,\frac{1}{2}\int\left\Vert \mu^{N}M(x)-\mu M(x)\right\Vert _{1}\lambda({\rm d}x)\\
   & \,=\,\frac{1}{2}\int\left\Vert \mu^{N}(f_{x})-\mu(f_{x})\right\Vert _{1}\lambda({\rm d}x),
\end{align*}
where $f_{x}=z\mapsto M(z,x)$ and $\mathrm{osc}(f_x) \le \bar{M}$. We have
$$
  \left\Vert \mu^{N}(f_{x})-\mu(f_{x})\right\Vert _{2}^2 
    =\frac{{\rm var}_{\mu}(f_{x})}{N} \le \frac{\bar{M}^2}{N},
$$
and by the Marcinkiewicz--Zygmund inequality \citep[e.g.][pp. 498]{shiryaev} and $\mathrm{osc}(f_x) \le \bar{M}$, there is a universal constant $B_4<\infty$ such that
$$
\| \mu^{N}(f_{x})-\mu(f_{x})\|_{4} \le B_4 \Big\| \Big( \sum_{k=1}^N \Big(\frac{f_x(X^k) - \mu(f_x)}{N}\Big)^2\Big)^{1/2} \Big\|_4 \le \frac{B_4\bar{M}}{\sqrt{N}}.
$$
By Hölder's inequality, for any non-negative random variable $Z$,
$\left\Vert Z\right\Vert _{2}^{2} = \| Z^{2/3} Z^{4/3} \|_1 \leq\left\Vert Z\right\Vert _{1}^{2/3}\left\Vert Z\right\Vert _{4}^{4/3}$, which leads to
$$
\left\Vert Z\right\Vert _{2}^{3}\left\Vert Z\right\Vert _{4}^{-2}
\le \left\Vert Z\right\Vert _{1} \le \| Z \|_2.
$$
Applying this with $Z = |\mu^N(f_x) - \mu(f)|$ and the bounds above, we obtain
$$
\frac{{\rm var}_{\mu}(f_{x})^{3/2}}{B_4^{2}\bar{M}^{2}}\cdot\frac{1}{\sqrt{N}} \le 
\left\Vert \mu^{N}(f_{x})-\mu(f_{x})\right\Vert _{1} \le 
\frac{{\rm var}_{\mu}(f_{x})^{1/2}}{\sqrt{N}}.
$$
The result follows from assumptions $(i)$ and $(ii)$ as we integrate the lower and upper bounds with respect to $\lambda$.
\end{proof}

\begin{lemma}
\label{lem:tvequality}
Let $E$ be a Polish space. Let $X^{1:N} \sim \mu$, $Y^{1:N} \sim \nu$ for some exchangeable distributions $\mu$, $\nu$ on $E^N$, and define
\[
\eta_x^N = \,  \frac{1}{N} \sum_{i=1}^N \delta_{X^i}, \quad
\eta_y^N = \,  \frac{1}{N} \sum_{i=1}^N \delta_{Y^i}.
\]
Then it holds that $\inf \P(\tilde \eta_x^N \neq \tilde \eta_y^N) = \inf \P(\tilde X^{1:N} \neq \tilde Y^{1:N})$, where the infimums are taken over couplings of $(\tilde \eta_x^N, \tilde \eta_y^N)$ of $(\eta_x^N, \eta_y^N)$ and $(\tilde X^{1:N}, \tilde Y^{1:N})$ of $(X^{1:N}, Y^{1:N})$, respectively.
\end{lemma}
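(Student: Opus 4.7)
The plan is to prove the two inequalities separately. The direction $\inf \P(\tilde\eta_x^N \neq \tilde\eta_y^N) \le \inf \P(\tilde X^{1:N} \neq \tilde Y^{1:N})$ is immediate: any coupling $(\tilde X^{1:N}, \tilde Y^{1:N})$ of $(X^{1:N}, Y^{1:N})$ induces a coupling of $(\eta_x^N, \eta_y^N)$ via the continuous map $(x_1,\ldots,x_N) \mapsto \frac{1}{N}\sum_i \delta_{x_i}$, and $\{\tilde X^{1:N} = \tilde Y^{1:N}\} \subseteq \{\tilde\eta_x^N = \tilde\eta_y^N\}$.

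For the reverse direction, the key observation is that exchangeability of $\mu$ (and $\nu$) forces a common regular conditional distribution of the tuple given its empirical measure. More precisely, for each atomic probability measure $m$ of the form $m = \frac{1}{N}\sum_{i=1}^N \delta_{z_i}$ on $E$, define $K(m,\uarg)$ to be the law of $(z_{\sigma(1)}, \ldots, z_{\sigma(N)})$ where $\sigma$ is a uniform random permutation of $\{1, \ldots, N\}$ (this is well-defined, as it does not depend on the chosen labelling of the atoms). Since $E$ is Polish, $E^N$ is Polish, and the map assigning to a tuple its empirical measure has a standard measurable inverse structure; the kernel $K$ is measurable. By exchangeability, $K(\eta_x^N, \uarg)$ is a regular conditional distribution of $X^{1:N}$ given $\eta_x^N$ under $\mu$, and similarly under $\nu$, so in both cases the conditional kernel is the \emph{same} map $K$.

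Now fix any coupling $(\tilde\eta_x^N, \tilde\eta_y^N)$ of $(\eta_x^N, \eta_y^N)$. Construct $(\tilde X^{1:N}, \tilde Y^{1:N})$ on an enlarged probability space as follows: on the event $\{\tilde\eta_x^N = \tilde\eta_y^N\}$, draw a single $\tilde X^{1:N} \sim K(\tilde\eta_x^N, \uarg)$ and set $\tilde Y^{1:N} = \tilde X^{1:N}$; on the event $\{\tilde\eta_x^N \neq \tilde\eta_y^N\}$, draw $\tilde X^{1:N} \sim K(\tilde\eta_x^N, \uarg)$ and $\tilde Y^{1:N} \sim K(\tilde\eta_y^N, \uarg)$ conditionally independently. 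The marginal of $\tilde X^{1:N}$ is $\int K(m, \uarg) \, \P(\eta_x^N \in \ud m) = \mu$ by the regular conditional distribution property, and similarly for $\tilde Y^{1:N}$, so this is a valid coupling of $(X^{1:N}, Y^{1:N})$. By construction $\{\tilde\eta_x^N = \tilde\eta_y^N\} \subseteq \{\tilde X^{1:N} = \tilde Y^{1:N}\}$, hence $\P(\tilde X^{1:N} \neq \tilde Y^{1:N}) \le \P(\tilde\eta_x^N \neq \tilde\eta_y^N)$. Taking the infimum over couplings of $(\eta_x^N, \eta_y^N)$ concludes.

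The main conceptual step is recognising that exchangeability makes the regular conditional distribution of the tuple given the empirical measure a \emph{common} kernel $K$ for both $\mu$ and $\nu$, so that equality of empirical measures can be lifted to equality of tuples via a shared random permutation. The only technical obstacle is the measurability/existence of $K$ as a regular conditional kernel, which is handled by the Polish assumption on $E$.
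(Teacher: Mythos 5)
Your proof is correct and follows essentially the same route as the paper's: the easy inclusion is identical, and the key step --- using exchangeability to realise the tuple as a uniformly random ordering of the atoms of the empirical measure, shared across the two marginals on the event where the empirical measures coincide --- is the same idea, which you package as a regular conditional distribution kernel $K$ applied to an arbitrary coupling, while the paper applies a common uniform permutation to measurable representatives extracted from a maximal coupling. The measurability of $K$ (equivalently, of those representatives) is the one technical point both treatments leave implicit, and your appeal to the Polish assumption is the standard way to resolve it.
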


\begin{proof}
\enquote{$\geq$}: Let $(S_x, S_y)$ be a maximal coupling of $(\eta^N_x, \eta^N_y)$. Then there exist random vectors $B_x$ and $B_y$ on $E^N$ such that
\begin{align*}
\frac{1}{N} \sum_{i=1}^N \delta_{B_x^i} = S_x, \quad \frac{1}{N} \sum_{i=1}^N \delta_{B_y^i} = S_y, 
\end{align*}
and $B_x = B_y$ whenever $S_x = S_y$. 
Let $\pi^{1:N}$ be a uniformly random (and independent of $(B_x, B_y)$) permutation of the vector $(1, \ldots, N)$, and define
\begin{align*}
\bar X^{1:N} &= (B_{x}(\pi^1), \ldots, B_{x}(\pi^N)), \quad
\bar Y^{1:N} = (B_{y}(\pi^1), \ldots, B_{y}(\pi^N)).
\end{align*}
By exchangeability, $(\bar X^{1:N}, \bar Y^{1:N})$ is a coupling of $(X^{1:N}, Y^{1:N})$. From this construction it follows that $\{\bar X^{1:N} = \bar Y^{1:N}\} = \{S_x = S_y \}$, and so $\P(S_x \neq S_y) = \P(\bar X^{1:N} \neq \bar Y^{1:N})$.

\enquote{$\leq$}: This is clear from the fact that for any coupling $(\tilde X^{1:N}, \tilde Y^{1:N})$ of $(X^{1:N}, Y^{1:N})$ it holds that $\{ \tilde X^{1:N} = \tilde Y^{1:N} \} \subset \{\frac{1}{N} \sum_{i=1}^N \delta_{\tilde X^i} = \frac{1}{N} \sum_{i=1}^N \delta_{\tilde Y^i}\}$.
\end{proof}

\begin{proof}[Proof of Lemma \ref{lem:rknbound}]
Denote $g = Q_{k-1,n}(G_n)$, $h = P_{k-1,n}(\phi)$. From the definition \eqref{eq:R_k} of $R_k$,
\[
R_k = \frac{(N \hat \eta_{k-1}^N + \delta_{x_{k-1}^*})(gh)}{(N \hat \eta_{k-1}^N + \delta_{x_{k-1}^*})(g)} - \frac{\hat \eta_{k-1}^N(gh)}{\hat \eta_{k-1}^N (g)},
\]
where we used the fact $\Phi_k(\mu)(f)=\mu Q_k(f)/\mu Q_k(1)$. By the definition of $\gamma_{k,n}$ in Theorem \ref{thm:lpbound_cpf},
\[
\frac{a+b}{c+d} - \frac{a}{c} = \frac{d}{c+d}\Big(\frac{b}{d} - \frac{a}{c}\Big) \quad \Rightarrow \quad R_k = \gamma_{k-1,n} \Big( h(x_{k-1}^*) - \frac{\hat \eta_{k-1}^N (gh)}{\hat \eta_{k-1}^N (g)} \Big).
\]
Since $\hat \eta_{k-1}^N (gh)/ \hat \eta_{k-1}^N (g)$ is between $\inf h$ and $\sup h$ as a weighted mean, we find
\[
R_k \leq \gamma_{k-1,n} \osc(h). \qedhere
\]
\end{proof}

\section{On weakening the assumptions (A1)}
\label{app:weaken}

We will now discuss how Theorem \ref{thm:pfforget} can be shown to hold if we replace Assumption \textup{(A1)} with \textup{(A2)} below -- in particular, we no longer assume lower bounds on the one-step transition kernels $M_t$. This type of situation can arise, e.g., in models where it is natural to restrict the kernels $M_t(x, y)$ so that transitions are only allowed to points $y$ within some maximum distance from $x$, but each point in $E$ is reachable given sufficiently many time steps.
\begin{assumption*}[A2]
There exist constants $\underline G>0$, $\Mm>0$, $\bar G < \infty$,  $\bar M < \infty$ such that assumption \textup{A1}$(G)$ holds with $\underline G$, $\bar G$, and
\begin{itemize}
\item $(M^+):$ The Markov kernels have densities such that $M_t \leq \bar M $ for all $t$, with respect to some finite measure $\lambda$.
\item $(M_m):$ There exists $m \geq 1$ such that the compositions $M_{t:t+m} = M_{t+1} \ldots M_{t+m}$ have densities satisfying $ \Mm \leq  M_{t:t+m}(x,y) $ for all $t$.
\end{itemize}
\end{assumption*}

A crucial part of the proof of Theorem \ref{thm:pfforget} is Lemma \ref{lem:h2bound}, which leads to the decomposition of $L^2$ errors in \eqref{eq:tvlemnewproof-2}. We discuss below how we can obtain a similar result (Lemma \ref{lem:h2boundalt}) with (A2). Because (A2) implies time-uniform errors (Remark \ref{rem:alt-time-uniform}), we conclude that a forgetting result analogous to Theorem \ref{thm:pfforget} holds with (A2).

\begin{remark}
  \label{rem:alt-time-uniform}
Assumption (A2) $(M^+)$ implies that $M_{t:t+m}(x,y) \le \bar{M}$, and therefore the condition $(M)_m$ of \cite[p.~139]{del2004feynman} holds. Proposition 4.3.4 and Theorem 7.4.4 in \cite{del2004feynman} give analogues of Lemmas \ref{lem:fkcontract} and \ref{lem:pfstability} under (A2):
\begin{align}
\sup_{\mu, \nu} \| \Phi_{n,n+k}(\mu) - \Phi_{n,n+k}(\nu) \|_\mathrm{TV} &\leq \beta_m^{\lfloor k/m \rfloor}, \label{eq:alt-ideal-forgetting}\\
\sup_{n\geq0} \sup_{\osc(\phi) \leq 1} \| \eta_n^N(\phi) - \eta_n(\phi) \|_p &\leq \frac{c_{p,m}}{\sqrt{N}}, \label{eq:alt-uniform-errors}
\end{align}
for some $\beta_m < 1, c_{p,m} < \infty$, which only depend on $p$, $m$, and the bounds on $G$ and $M_{t:t+m}$. 
\end{remark}

Under \textup{(A2)}, we obtain a lower bound for the densities of the ideal filters for large enough time indices, not depending on the initial distribution $\eta_0$. We will use this fact in the proof of Lemma \ref{lem:h2boundalt} to conclude that, despite the lack of lower bound on $M_t$, the particle filter is 'sufficiently unlikely' to reach low-density areas with large $N$.

\begin{lemma}
\label{lem:etabound}
Assume \textup{(A2)} for some $m\geq 1$. Then there exists a constant $\epsilon_m>0$, such that for all $k\geq m$ there exists a density
$\eta_k(x) \geq \epsilon_m$. In particular, we can choose $\epsilon_m = (\underline G/ \bar G)^m \Mm$.
\end{lemma}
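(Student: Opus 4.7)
My plan is to work with the unnormalised Feynman--Kac measure $\gamma_n$ defined by
$$\gamma_n(\phi) = \int \phi(x_n) \prod_{i=0}^{n-1} G_i(x_i)\, \eta_0(\ud x_0) \prod_{j=1}^{n} M_j(x_{j-1},\ud x_j),$$
so that $\eta_n(\phi) = \gamma_n(\phi)/\gamma_n(1)$. Under (A2), each $M_j$ admits a density w.r.t.\ $\lambda$, so $\gamma_n \ll \lambda$ with
$$\frac{\ud \gamma_n}{\ud \lambda}(y) = \int \prod_{i=0}^{n-1} G_i(x_i)\cdot M_n(x_{n-1},y) \prod_{j=1}^{n-1} M_j(x_{j-1},\ud x_j)\,\eta_0(\ud x_0),$$
and $\eta_n$ has density $\eta_n(y) = (\ud \gamma_n/\ud\lambda)(y)/\gamma_n(1)$. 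The idea is then to obtain a lower bound on the numerator and a matching upper bound on the denominator by isolating the last $m$ time steps.

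For the numerator, I would use the pointwise lower bound $G_i \geq \underline{G}$ on the last $m$ potentials $i = n-m,\ldots,n-1$. After this bound is applied, the variables $x_{n-m+1},\ldots,x_{n-1}$ no longer appear in any $G$-factor, so that the chunk of kernels $M_{n-m+1}(x_{n-m},\ud x_{n-m+1})\cdots M_{n-1}(x_{n-2},\ud x_{n-1}) M_n(x_{n-1},y)$ integrates to the $m$-step density $M_{n-m+1:n}(x_{n-m},y)$, which by (A2) $(M_m)$ is bounded below by $\Mm$ uniformly in $x_{n-m}$ and $y$. What remains of the integral is precisely $\gamma_{n-m}(1)$, yielding
$$\frac{\ud \gamma_n}{\ud \lambda}(y) \;\geq\; \underline{G}^{m}\,\Mm\,\gamma_{n-m}(1).$$

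For the denominator, I would symmetrically use $G_i \leq \bar{G}$ for $i=n-m,\ldots,n-1$ and then integrate out $x_{n-m+1},\ldots,x_n$ using $\int M_j(x,\ud y) = 1$, which gives $\gamma_n(1) \leq \bar{G}^{m}\,\gamma_{n-m}(1)$. Taking the ratio, the $\gamma_{n-m}(1)$ factors (which are strictly positive by (A2)) cancel, yielding $\eta_n(y) \geq (\underline{G}/\bar{G})^{m}\,\Mm =: \epsilon_m$ for every $y$, uniformly in $n \geq m$ and in the initial distribution $\eta_0$.

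The only care needed is bookkeeping: grouping the final $m$ kernels correctly so that $(M_m)$ applies to a full $m$-fold composition, and ensuring that after the pointwise bounding of the trailing $G$'s the remaining factors reassemble exactly into $\gamma_{n-m}(1)$ in both the numerator and denominator. No deeper obstacle arises, and the argument is robust to the absence of a one-step lower bound on $M_t$, which is precisely the point of weakening (A1) to (A2).
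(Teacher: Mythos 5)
Your proof is correct and follows essentially the same route as the paper's: both isolate the last $m$ time steps, bound the corresponding potentials below by $\underline{G}^m$ and above by $\bar{G}^m$, and invoke $(M_m)$ for the resulting $m$-fold kernel composition to get the constant $(\underline{G}/\bar{G})^m\Mm$. The paper phrases this via the semigroup identity $\eta_k = \Phi_{k-m,k}(\eta_{k-m})$ applied to an arbitrary initial measure, whereas you cancel the common factor $\gamma_{n-m}(1)$ of the unnormalised measures --- the same cancellation in different clothing.
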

\begin{proof}
We use the fact that for any $k \geq m$, 
\[
\eta_k = \Phi_{k-m,k} ( \eta_{k-m} ).
\]
Denoting $\E_{p,\mu}(f_{p}) = \int \int f_{p}(x_p, \ldots, x_k) M_{p+1}(x_p,\ud x_{p+1}) \ldots M_k(x_{k-1},\ud x_k) \mu(\ud x_p)$, we can write for any distribution $\mu$ and bounded test function $f\geq 0$ on $E$: 
\begin{align*}
\Phi_{k-m,k}(\mu)(f) &\;=\; \frac{\E_{k-m,\mu} (f(X_k) \prod_{q=k-m}^{k-1} G_q(X_q))}{\E_{k-m,\mu} ( \prod_{q=k-m}^{k-1} G_q(X_q))} 
\;\geq\; \Big(\underline G/ \bar G \Big)^m \E_{k-m,\mu} (f(X_k)),
\end{align*}
and
\begin{align*}
\E_{k-m,\mu} (f(X_k)) &\;=\; \int \int f(x_k) M_{k-m,k}(x_{k-m},x_{k} ) \lambda(\ud x_k) \mu(\ud x_{k-m}) \;\geq\; \int f(x_k) \Mm \lambda(\ud x_k).
\end{align*}
Since this is true for arbitrary $\mu$, we conclude.
\end{proof}

Lemma \ref{lem:h2boundalt} below gives an alternative way to bound $\E H_k^2$ in the proof of Theorem \ref{thm:tvlemmanew}, bypassing the need to use Lemma \ref{lem:h2bound} (and therefore Assumption \textup{(A1)}). We will use the following notation: Let $X^{1:N}_{k-1} \sim \M_{0:k-1}(x_0^{1:N}, \cdot)$, $\tilde X_{k-1}^{1:N} \sim \M_{0:k-1}(\tilde x_0^{1:N}, \cdot)$ with arbitrary initial states 
$x_0^{1:N}$ and $\tilde x_0^{1:N}$. Define the corresponding (updated) empirical measures
\[
\pi_{k-1}^N = \Psi_{k-1} \Big(\frac{1}{N} \sum_{j=1}^N \delta_{X_{k-1}^j} \Big), \quad \tilde \pi_{k-1}^N = \Psi_{k-1} \Big(\frac{1}{N} \sum_{j=1}^N \delta_{\tilde X_{k-1}^j} \Big),
\]
the ideal filters
\[
\eta_k = \Phi_{0,k} \Big( \frac{1}{N} \sum_{j=1}^N \delta_{x_{0}^j} \Big), \quad \tilde \eta_k = \Phi_{0,k} \Big( \frac{1}{N} \sum_{j=1}^N \delta_{\tilde x_{0}^j} \Big),
\]
and the updated versions $\pi_k = \Psi_k(\eta_{k})$, $\tilde \pi_k = \Psi_k(\tilde \eta_{k})$.

\begin{lemma}
\label{lem:h2boundalt}
Assume \textup{(A2)} for some $m\geq 1$ and denote $D(\mu,\nu) = \sup_{\osc(\phi) \leq 1} \|\mu(\phi) - \nu(\phi) \|_2^2$. Then, for all $k \geq m$,
\begin{align*}
\E H^2(\pi_{k-1}^N M_k, \tilde \pi_{k-1}^N M_k) \leq C \lambda(E) \Big(  D( \pi_{k-1}^N, \tilde \pi_{k-1}^N) 
+ D(\pi_{k-1}^N, \pi_{k-1}) 
+ D(\tilde \pi_{k-1}^N, \tilde \pi_{k-1}) \Big),
\end{align*}
where we can choose 
$C = 2 \bar M^2 (\bar G / \underline G)^m \Mm^{-1} \max \Big\{\frac{1}{8}, \bar M (\bar G / \underline G)^m \Mm^{-1} \Big\}$.
\end{lemma}
\begin{remark}
  By Remark \ref{rem:alt-time-uniform}, the last two terms in the sum above are of the order $O(1/N)$ by the stability of the particle filter \eqref{eq:alt-uniform-errors}, and $\| \pi_{k-1}^N(\phi) - \tilde \pi_{k-1}^N (\phi) \|_2$ is bounded by
\[
\| \pi_{k-1}^N(\phi) - \pi_{k-1} (\phi) \|_2 + \| \tilde \pi_{k-1}^N(\phi) - \tilde \pi_{k-1} (\phi) \|_2 + \| \pi_{k-1}(\phi) - \tilde \pi_{k-1} (\phi) \|_2,
\]
where the last term vanishes exponentially by the forgetting property of the ideal filter \eqref{eq:alt-ideal-forgetting}. This leads to the desired order of $O(1/N)$ for $\E H^2(\pi_{k-1}^N M_k, \tilde \pi_{k-1}^N M_k)$ for large $k$.
\end{remark}

\begin{proof}[Proof of Lemma \ref{lem:h2boundalt}]
Denote $M = M_k$, $\mu = \pi_{k-1}^N$, and $\tilde \mu = \tilde \pi_{k-1}^N$.
By Lemma \ref{lem:etabound}, we can choose $\epsilon>0$ such that $\eta_k(x), \tilde \eta_k(x) \geq 2\epsilon$, $\forall x$. Define now $B_\epsilon(x) = \{ \mu M(x) < \epsilon \}$,  $\tilde{B}_\epsilon(x) = \{ \tilde{\mu} M(x) < \epsilon \}$ (the 'low-density sets') and $G_\epsilon(x) = B_\epsilon^C(x) \cap \tilde{B}_\epsilon^C(x)$ (the 'high density set'). We split the state space into two parts and bound the expected $H^2$:
\begin{align*}
2 \E H^2(\mu M, \tilde{\mu} M)
& = \int \E \bigg[ \big(\I(G_\epsilon(x)) + \I(G_\epsilon(x)^C) \big) \Big(\sqrt{\mu M(x)} - \sqrt{\tilde{\mu} M(x)}\Big)^2 \bigg]\lambda(\ud x )\\
& \le \frac{1}{4\epsilon} 
\int \E\bigg[  \Big(\mu M(x) - \tilde{\mu} M(x)\Big)^2 \bigg] \lambda(\ud x) + \bar{M}\lambda(E)\sup_x \P\big(G_\epsilon^C(x)\big) \\
& \le \frac{\lambda(E)}{4\epsilon} 
\sup_x \E\bigg[  \Big(\mu M(x) - \tilde{\mu} M(x)\Big)^2 \bigg]  + \bar{M}\lambda(E)\sup_x\P\big(G_\epsilon^C(x)\big),
\end{align*}
where we used the Lipschitz continuity of $x\mapsto \sqrt{x}$ restricted to $x\ge \epsilon$, with constant $(4\epsilon)^{-1/2}$. Clearly,
$$
  \P\big(G_\epsilon^C(x)\big)
  = \P \big( B_\epsilon(x) \cup \tilde{B}_\epsilon(x)\big)
  \le \P(B_\epsilon(x)) + \P(\tilde{B}_\epsilon(x)),
$$
and both probabilities can be bounded similarly: 
\begin{align*}
    \P(B_\epsilon(x)) \,=\, 
    \P( \mu M(x) < \epsilon, \eta_k(x) \ge 2\epsilon) 
    & \,\le\, \P(\eta_k(x) - \mu M(x) \ge \epsilon) \\
    &\,\le\, \P\big( | \mu M(x) - \eta_k (x) | \ge \epsilon\big) \\
    &\,\le\, \frac{\E | \mu M(x) - \eta_k (x) |^2}{\epsilon^2} \\
    &\,\le\, \frac{1}{\epsilon^2} \mathrm{osc} \big(M(\;\cdot\;, x)\big)^2 \sup_{\mathrm{osc}(\phi)\le 1} \E | \mu (\phi) - \pi_{k-1} (\phi) |^2.
\end{align*}
Now we can apply the same argument to $\P(\tilde B_\epsilon(x))$, and bound the remaining term in the decomposition:
$$
 \frac{\lambda(E)}{4\epsilon} \sup_x \E\Big[  \big(\mu M(x) - \tilde{\mu} M(x)\big)^2 \Big]
\;\le\;  \frac{\lambda(E)}{4\epsilon}  \mathrm{osc}(M(\;\cdot\;,x))^2 \sup_{\mathrm{osc}(\phi)\le 1} \E | \mu(\phi) - \tilde{\mu}(\phi) |^2.
$$
It remains to fix $\epsilon$ bound the constant $C$ in the claim. Using the fact that $\mathrm{osc}(M(\;\cdot\;, x)) < \bar M$ and choosing $\epsilon = \epsilon_m/2 = (\underline G/ \bar G)^m \Mm/2$ (which satisfies $\eta_k(x), \tilde \eta_k(x) \geq 2\epsilon$ by Lemma \ref{lem:etabound}), we find
\begin{align*}
\frac{1}{2} \max \Big\{\frac{1}{4\epsilon} \mathrm{osc}(M(\;\cdot\;, x))^2, \;\frac{1}{\epsilon^2}\bar M \mathrm{osc}(M(\;\cdot\;, x))^2  \Big\} &\;\leq\; \frac{1}{2}\bar M^2 \epsilon^{-1} \max \Big\{\frac{1}{4}, \; \bar M \epsilon^{-1} \Big\} \\
&\;=\; \bar M^2 \epsilon_m^{-1} \max \Big\{\frac{1}{4}, \; 2 \bar M \epsilon_m^{-1} \Big\}.
\qquad \qedhere
\end{align*}
\end{proof}

\end{document}